\documentclass[11pt]{amsart}

\usepackage[utf8]{inputenc}
\usepackage[T1]{fontenc}
\usepackage[french, english]{babel}

\usepackage{amsmath}
\usepackage{amssymb}
\usepackage{mathrsfs}
\usepackage{bbm}
\usepackage{graphicx}
\usepackage{listings}
\usepackage{mathrsfs}
\usepackage{enumerate}
\usepackage{pict2e}
\usepackage{stmaryrd}
\usepackage{mathtools}

\usepackage{enumitem}

\usepackage[all,cmtip]{xy}

\usepackage{multirow}

\evensidemargin -0.1cm \oddsidemargin -0.1cm 
\topmargin  -0.2in \textheight  21.5cm \overfullrule = 0pt
\textwidth 16.2cm

\usepackage{color}
\definecolor{marin}{rgb}   {0.,   0.1,   0.5} 
\definecolor{rouge}{rgb}   {0.8,   0.,   0.} 
\definecolor{sepia}{rgb}   {0.4,   0.25,   0.} 
\definecolor{mag}{rgb}   {0.3,   0,   0.3} 

\usepackage[colorlinks,citecolor=sepia,linkcolor=marin,urlcolor=mag ,bookmarksopen,bookmarksnumbered]{hyperref}

\newtheorem{theorem}{Theorem}[section]
\newtheorem{corollary}[theorem]{Corollary}
\newtheorem{lemma}[theorem]{Lemma}
\newtheorem{proposition}[theorem]{Proposition}

\newtheorem{remark}[theorem]{Remark}

\DeclareMathOperator{\Reelle}{Re}

\DeclareMathOperator{\Imag}{Im}

\newcommand*{\transp}[2][-3mu]{\ensuremath{\mskip1mu\prescript{\smash{\mathrm t\mkern#1}}{}{\mathstrut#2}}}                      

\begin{document}

\title[$L^p$ estimates and local smoothing effects for the quadratic evolution equations]{Gains of integrability and local smoothing effects for quadratic evolution equations}

\author{Paul Alphonse}
 \address{\small{(Paul Alphonse) Universit\'e de Lyon, ENSL, UMPA - UMR 5669, F-69364 Lyon}}

\email{paul.alphonse@ens-lyon.fr}

\author{Joackim Bernier}

 \address{\small{(Joackim Bernier) Laboratoire de Math\'ematiques Jean Leray, Universit\'e de Nantes, UMR CNRS 6629\\
2 rue de la Houssini\`ere \\
44322 Nantes Cedex 03, France}}

\email{joackim.bernier@univ-nantes.fr}

\keywords{quadratic operators; smoothing effects; complex Gaussian kernel; exact splittings}

\makeatletter
	\@namedef{subjclassname@2020}{\textup{2020} Mathematics Subject Classification}
\makeatother

\subjclass[2020]{ 35B65, 35S30, 42B10, 47D06}

\begin{abstract} 
We characterize geometrically the semigroups generated by non-selfadjoint quadratic differential operators $(e^{-tq^w})_{t\geq 0}$ enjoying local smoothing effects and providing gains of integrability. More precisely, we prove that the evolution operators $e^{-tq^w}$ map $L^{\mathfrak{p}}$ on $L^{\mathfrak{q}} \cap C^\infty$, for all $1\leq \mathfrak{p} \leq \mathfrak{q} \leq \infty$, if and only if the singular space of the quadratic operator $q^w$ is included in the graph of a linear map. We also provide quantitative estimates for the associated operator norms in the short-time asymptotics $0<t\ll 1$.
\end{abstract} 
\maketitle


\section{Introduction}\label{intro}

\subsection{Motivation}
During the last decades, remarkable advances have been made in the analysis of the smoothing effects of semigroups generated by non-self-adjoint operators \cite{Wei79,Her07,HP09,HPV17,HPV18,Alp20a,AB20,AB21,Whi21a,Whi21b}. One of the main objectives of this line of research is to understand (and to quantify) how the interactions between regularizing phenomena (encoded by the self-adjoint part of the operator) and transport dynamics (encoded by its skew-adjoint part) enhance the regularizing effects of the semigroup. For instance, if we consider the Kolmogorov equation 
$$
\partial_t u = v\partial_x  u + \partial_v^2 u, \quad t\geq 0,\, x,v\in \mathbb{R},
$$
at first glance, we expect that its solutions are very smooth with respect to the speed variable $v$ but not especially with respect to the space variable $x$. Nevertheless, the Kolmogorov's splitting formula (see \cite{Kol34})
\begin{equation}
\label{eq:split_kolmo}
e^{t (v\partial_x + \partial_v^2)} = e^{t (\partial_v - t\partial_x/2)^2 + t^3 \partial_x^2/12 }e^{tv\partial_x},
\end{equation}
makes clear that the solutions are actually also smooth with respect to the space variable $x$ (but the smoothing effects are slower in this direction due to the factor $t^3$). Moreover, it can be noted on \eqref{eq:split_kolmo} that this gain is due to the non-commutation (i.e. the interaction) between the free transport $v\partial_x$ and the partial diffusion $\partial_v^2$.

In this paper, we focus on semigroups generated by quadratic differential operators acting on $L^2(\mathbb R^n)$, with $n\geq1$. They are the evolution operators associated with partial differential equations of the form
\begin{equation}\label{eq:Navier-Stokes}
\partial_t u + q^w(x,D_x) u = 0,\quad t\geq0,\, x\in\mathbb R^n,
\end{equation}
where $q^w(x,D_x)\equiv q^w$ is the Weyl quantization of a complex-valued quadratic form $q:\mathbb R^{2n}\rightarrow\mathbb C$ with a nonnegative real part, and $D_x = -i\nabla$. Denoting $Q\in S_{2n}(\mathbb C)$ the matrix of $q$ in the canonical basis of $\mathbb R^{2n}$, that is, the unique $Q\in S_{2n}(\mathbb C)$ such that $q(X) = QX\cdot X$ for all $X\in\mathbb R^{2n}$, $q^w$ is nothing but the differential operator 
$$q^w(x,D_x) = \begin{pmatrix} x & D_x \end{pmatrix} Q \begin{pmatrix} x \\ D_x
\end{pmatrix}.$$
We point out that since the quadratic form $q$ is complex-valued, the quadratic operator $q^w$ is generally non-selfadjoint: $(\Reelle q)^w$ is selfadjoint whereas $i(\Imag q)^w$ is skew-selfadjoint. This operator is equipped with the domain $D(q^w) = \{u\in L^2(\mathbb R^n) : q^wu\in L^2(\mathbb R^n)\}.$ 
Let us recall that since the real part of the quadratic form $q$ is nonnegative, the quadratic operator $q^w$ is shown in  \cite[pp. 425-426]{Hor95} to be maximal accretive and to generate a strongly continuous contraction semigroup $(e^{-t q^w})_{t\geq0}$ on $L^2(\mathbb R^n)$. This setting is not very general but it  contains a lot of non trivial examples (i.e. for which the regularizing effects are not directly given by an explicit formula like \eqref{eq:split_kolmo}). Moreover, this framework includes several interesting equations coming from physics (like Fokker-Planck equations for instance, see \cite[Section 4]{Ber21}).


Being given a complex-valued quadratic form $q:\mathbb R^{2n}\rightarrow\mathbb C$, its \emph{singular space} $S$ is the subspace of $\mathbb{R}^{2n}$ defined by\footnote{Note that sometimes in the literature, $\Reelle Q$ is replaced by $\Reelle F$ in the definition of $S$. Nevertheless, since the matrix $J$ is real and invertible, the definitions are of course equivalent.}
\begin{equation}\label{def:S} 
	S := \bigcap_{\ell \in \mathbb{N}} \mathrm{Ker}\big((\Reelle Q) (\Imag F)^\ell\big),
\end{equation}
where $F = JQ$ is the \emph{Hamilton map} of $q$ and $J$ is the matrix of the standard symplectic form, i.e.
$$
J = \begin{pmatrix} 0_n & I_n \\
-I_n & 0_n
\end{pmatrix}.
$$
This space, which has been introduced for the first time by Hitrik and Pravda-Starov in \cite{HP09}, encodes most of the smoothing effects of the semigroup $(e^{-t q^w})_{t \geq 0}$. For example, we have proven in \cite[Thm 2.6 and 2.8]{AB21} that $S^{\perp}$ (the orthogonal complement of $S$ for the canonical Euclidean structure of $\mathbb{R}^{2n}$) is the set of the directions of the phase space along which $e^{- q^w}$
is regularizing\footnote{since $q$ and $tq$ have the same singular space, the time plays no role in this discussion, so we choose $t=1$.}, i.e.
\begin{equation}
\label{eq:weakAENS}
	S^{\perp} = \big\{ (x_0,\xi_0) \in \mathbb R^{2n} \ |\ (x_0 \cdot x + \xi_0 \cdot D_x)\, e^{- q^w} \ \mathrm{is \ bounded \ on} \ L^2(\mathbb{R}^n) \big\}.
\end{equation}
Actually, in \cite[Thm 2.6]{AB21}, we have proven a result much stronger than \eqref{eq:weakAENS}: if $e^{- q^w}$ is multiplied by any product of operators of the form $(x_0 \cdot x + \xi_0 \cdot D_x)$, with $(x_0,\xi_0)\in S^{\perp}$, then it is still bounded on $L^2(\mathbb{R}^n)$. As a consequence, focusing only on the smoothing effects of $e^{- q^w}$, we have proven that
\begin{equation}
\label{eq:joli}
	S \subset \mathbb{R}^n  \times \{ 0 \} \quad \iff \quad e^{- q^w} \ \mathrm{maps} \ L^2(\mathbb{R}^n) \ \mathrm{to} \ H^\infty(\mathbb{R}^n).
\end{equation}
Nevertheless, in \eqref{eq:joli}, we miss some local smoothing effects. Indeed, consider for example the self-adjoint quadratic operator associated with the quadratic form $q(x,\xi)=(\xi - x)^2$ on $\mathbb{R}^2$. Its singular space is the diagonal $S=\{ (x,x) \ | \ x\in \mathbb{R} \}$, so \eqref{eq:joli} does not apply. However, since 
$$
e^{ \frac{i}2 x^2} \partial_x^2 e^{- \frac{i}2 x^2} = - (D_x - x)^2,
$$
we have
$$
e^{- (D_x - x)^2} = e^{ \frac{i}2 x^2} e^{\partial_x^2} e^{- \frac{i}2 x^2}.
$$
Therefore, we note that this semigroup makes the functions smooth (but not uniformly in space due to the highly oscillatory factor $e^{- \frac{i}2 x^2}$).

\subsection{Main results}
One of the main achievements of this paper is to take into account this kind of local smoothing phenomena: we characterize geometrically the locally smoothing semigroups. Indeed, the following theorem is a direct qualitative corollary of our results (i.e. Theorem \ref{thm:maindir} and Theorem \ref{thm:recip} just below).
 \begin{theorem} \label{thm:weak} For any complex-valued quadratic form $q$ on $\mathbb{R}^{2n}$ whose real part is nonnegative, denoting by $S$ its singular space (defined by \eqref{def:S}), we have
\begin{equation}
\label{eq:tresjoli}
	S \cap  (\mathbb{R}^n  \times \{ 0 \})^\perp = \{0\} \quad \iff \quad e^{- q^w} \ \mathrm{maps} \ L^2(\mathbb{R}^n) \ \mathrm{to} \ C^\infty(\mathbb{R}^n).
\end{equation}
\end{theorem}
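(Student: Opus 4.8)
The plan is to separate the elementary geometry of the singular space from the analytic content and to treat the two implications of \eqref{eq:tresjoli} separately; the direct one relies on the boundedness result \cite[Theorem 2.6]{AB21} recalled above, while the converse is the genuinely delicate part. First I record that $(\mathbb{R}^n\times\{0\})^\perp=\{0\}\times\mathbb{R}^n$, so the left-hand side of \eqref{eq:tresjoli} reads $S\cap(\{0\}\times\mathbb{R}^n)=\{0\}$; writing $\pi_x,\pi_\xi\colon\mathbb{R}^{2n}\to\mathbb{R}^n$ for the two canonical projections, this says that $\pi_x$ is injective on $S$, i.e.\ that $S$ is the graph of a linear map defined on $\pi_x(S)$, and, dualizing, that $S^\perp+(\mathbb{R}^n\times\{0\})=\mathbb{R}^{2n}$, equivalently $\pi_\xi(S^\perp)=\mathbb{R}^n$. (In the body of the paper \eqref{eq:tresjoli} is the instance $t=1$, $\mathfrak{p}=\mathfrak{q}=2$ of the quantitative Theorems \ref{thm:maindir} and \ref{thm:recip}; what follows is a direct route.)

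For the direct implication, assume $\pi_\xi(S^\perp)=\mathbb{R}^n$ and pick, for each $j\in\{1,\dots,n\}$, a vector $a_j\in\mathbb{R}^n$ with $(a_j,e_j)\in S^\perp$, where $(e_1,\dots,e_n)$ denotes the canonical basis of $\mathbb{R}^n$. Given $u\in L^2(\mathbb{R}^n)$, set $v=e^{-q^w}u$. By \cite[Theorem 2.6]{AB21}, for every $\alpha\in\mathbb{N}^n$ the operator $P_\alpha e^{-q^w}$, with $P_\alpha:=\prod_{j=1}^n(a_j\cdot x+D_{x_j})^{\alpha_j}$, is bounded on $L^2(\mathbb{R}^n)$, hence $P_\alpha v\in L^2(\mathbb{R}^n)$. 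Expanding and reordering, $P_\alpha=D_x^\alpha+\sum_{|\gamma|<|\alpha|}p_{\alpha,\gamma}(x)D_x^\gamma$ with polynomial coefficients $p_{\alpha,\gamma}$, and an induction on $|\alpha|$ (base case $v\in L^2$) gives $D_x^\alpha v\in L^2_{\mathrm{loc}}(\mathbb{R}^n)$ for every $\alpha$, since $D_x^\alpha v=P_\alpha v-\sum_{|\gamma|<|\alpha|}p_{\alpha,\gamma}(x)D_x^\gamma v$ and polynomials are locally bounded. Thus $v\in\bigcap_{k\geq0}H^k_{\mathrm{loc}}(\mathbb{R}^n)=C^\infty(\mathbb{R}^n)$ by Sobolev embedding, which is the claim.

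For the converse I argue by contraposition: assuming $(0,\xi_0)\in S$ with $\xi_0\neq0$, I must produce $u\in L^2(\mathbb{R}^n)$ with $e^{-q^w}u\notin C^\infty(\mathbb{R}^n)$. If $e^{-q^w}$ did map $L^2$ into $C^\infty$, then, its graph being closed (convergence in $C^\infty$ forces convergence in $L^2_{\mathrm{loc}}$, while $e^{-q^w}$ is $L^2$-continuous), the closed graph theorem would make $u\mapsto e^{-q^w}u$ continuous from $L^2(\mathbb{R}^n)$ into the Fréchet space $C^\infty(\mathbb{R}^n)$; in particular, for some fixed ball $B_R$, some fixed order $m$ and some constant $C$, one would have $\sum_{|\alpha|\leq m}\|D_x^\alpha e^{-q^w}u\|_{L^\infty(B_R)}\leq C\|u\|_{L^2}$ for all $u$. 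I would contradict this with a sequence of normalized Gaussian wave packets $u_\lambda$ oscillating at frequency $\lambda\xi_0$, showing that $e^{-q^w}u_\lambda$ remains, in a fixed ball and up to a negligible remainder, a wave packet of frequency $\sim\lambda$ with amplitude bounded from below, so that $\|D_x^\alpha e^{-q^w}u_\lambda\|_{L^\infty(B_R)}\gtrsim\lambda^{|\alpha|}$ for a suitable $\alpha$ and $\lambda$ large. The crucial mechanism is that this propagation is governed by the complex Hamilton flow generated by $q$ (equivalently by $F=JQ$), and that along the direction $\xi_0\in S$ no exponential damping occurs: indeed $S\subset\mathrm{Ker}(\Reelle Q)$ (the $\ell=0$ term in \eqref{def:S}) and $S$ is invariant under the real Hamilton flow $e^{t\Imag F}$, so the Gaussian weight generated by $e^{-q^w}$ stays subdominant on the relevant orbit.

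I expect this last step to be the main obstacle: making the wave-packet propagation quantitative enough — through the explicit complex Gaussian kernel of $e^{-q^w}$ — to pin down simultaneously the non-decay of the amplitude and the persistence of the high frequency in a fixed region of space, the delicate part being to keep the propagated packet from escaping to spatial infinity as $\lambda\to+\infty$. An alternative would be to conjugate $q^w$ by a carefully chosen metaplectic operator $\mathcal{U}$ splitting it into a purely imaginary part carried by the variables attached to $S$ (whose flow is metaplectic, hence nowhere regularizing) and a transverse part regularizing in all of its variables, and then to note that a vertical vector $(0,\xi_0)\in S$ forces $\mathcal{U}$ to incorporate a genuine Fourier transform in one variable, which destroys the $C^\infty$ gain — this being exactly why the dichotomy is controlled by $S\cap(\mathbb{R}^n\times\{0\})^\perp$ and why the criterion is not metaplectically invariant (such partial Fourier transforms preserve neither $C^\infty$ nor $H^\infty$).
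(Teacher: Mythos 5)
Your treatment of the forward implication is correct and genuinely different from the paper's. You deduce $C^\infty$-smoothness directly from the $L^2$-boundedness of the products $P_\alpha e^{-q^w}$, $P_\alpha=\prod_j(a_j\cdot x+D_{x_j})^{\alpha_j}$ with $(a_j,e_j)\in S^\perp$ — which is exactly what \cite[Theorem 2.6]{AB21} provides — followed by the elementary normal-form and induction argument giving $e^{-q^w}u\in H^k_{\mathrm{loc}}(\mathbb{R}^n)$ for every $k$. The paper instead obtains this implication from the quantitative Theorem \ref{thm:maindir}, whose proof goes through the factorization \eqref{eq:the_dec} of $e^{-tq^w}$ and the Gaussian-kernel analysis of Sections \ref{sec:Gauss}--\ref{sec:localsmooth}. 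Your route is shorter and self-contained modulo \cite{AB21}, but it only yields the qualitative statement (no estimate such as \eqref{eq:mainest}, no $L^{\mathfrak p}\to L^{\mathfrak q}$ gain); for Theorem \ref{thm:weak} as stated it suffices.

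The converse, however, is not proved. You reduce it (via the closed graph theorem) to exhibiting wave packets $u_\lambda$ for which $e^{-q^w}u_\lambda$ retains a frequency $\sim\lambda$ with non-decaying amplitude inside a fixed ball, and you yourself identify the quantitative propagation step — keeping the packet from being damped by the Gaussian weight and from escaping to spatial infinity — as the main obstacle. That step is precisely the missing content: as written, the implication ``$e^{-q^w}$ maps $L^2$ to $C^\infty$ $\Rightarrow$ $S\cap(\{0\}\times\mathbb{R}^n)=\{0\}$'' is a program, not a proof. The paper avoids wave packets entirely. By the polar decomposition $e^{-q^w}=e^{-a^w}U$ of \cite[Theorem 2.1]{AB21}, with $U$ unitary and $S$ equal to the isotropic cone of the nonnegative real form $a$ (Proposition \ref{prop:singspacepolar}), one may assume $(0,e_n)\in S$ after a linear change of variables in $\mathbb{R}^n$; then $a$ does not depend on $\xi_n$, so the tensorization Lemma \ref{02092021L1} gives $e^{-a^w}(f\otimes g)=(e^{-a_{x_n}^w}f)\otimes g$, and choosing $g\in L^2(\mathbb{R})$ discontinuous (e.g. $g(x_n)=(\log x_n)^{-2}\mathbbm{1}_{|x_n|<1}$) together with $f\in\mathscr S(\mathbb{R}^{n-1})$ such that $e^{-a_0^w}f$ does not vanish identically produces $u=U^{-1}(f\otimes g)$ with $e^{-q^w}u\notin C^0(\mathbb{R}^n)$ (Proposition \ref{01102021P1}). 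If you want to complete your argument, this is the mechanism to import: the failure of smoothing in a vertical null direction of $a$ is an exact one-dimensional tensor-product phenomenon, and no microlocal propagation estimate is needed.
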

\begin{remark} 
\label{rem:graph}
The condition $S \cap  (\mathbb{R}^n  \times \{ 0 \})^\perp = \{0\}$ means that $S$ has to be included in a graph, i.e. there exists a $n\times n$ real matrix $G$ such that $S \subset \{ (x,Gx) \ | \ x\in \mathbb{R}^n \} $. See Lemma \ref{lem:graph} in the appendix for the proof.
\end{remark}
\begin{remark} It is relevant to compare \eqref{eq:joli} and \eqref{eq:tresjoli}. On the one hand, the space $C^\infty(\mathbb{R}^n)$ is much larger than $H^\infty(\mathbb{R}^n)$ but on the other hand, it is much less restrictive to have the condition $S \cap  (\mathbb{R}^n  \times \{ 0 \})^\perp = \{0\}$ than 
$S \subset \mathbb{R}^n  \times \{ 0 \}$. Indeed, in the vain of Remark \ref{rem:graph}, the assumption $S \subset \mathbb{R}^n  \times \{ 0 \}$ means that $S$ has to be included in the graph of the null matrix.
\end{remark}
\begin{remark} As we will see in Theorem \ref{thm:maindir} and Theorem \ref{thm:recip}, in \eqref{eq:tresjoli}, the space $L^2(\mathbb{R}^n)$ could be replaced by any space $L^\mathfrak{p}(\mathbb{R}^n)$ with $1\leq \mathfrak{p} \leq \infty$.
\end{remark}

\begin{remark}  The implication ``$\Rightarrow$'' in Theorem \ref{thm:weak} is also a direct corollary of some results in the literature on propagation of singularities, see e.g. \cite[Theorem 6.2]{PRW18}, \cite[Proposition 3]{Wah18}.
\end{remark}
%

Studying carefully the new condition $S \cap  (\mathbb{R}^n  \times \{ 0 \})^\perp = \{0\}$, we understood that it is not only associated with local smoothing properties but also with some gains of integrability. As we will see in Theorem \ref{thm:maindir} just below, the evolution operator can be extended  by continuity from $L^{\mathfrak{p}}$ to $L^{\mathfrak{q}}$ provided that $\mathfrak{p} \leq \mathfrak{q}$. Somehow, this gain of integrability (which is the same as the one for the heat equation) can be seen as another kind of regularizing effect. It seems to us that it is an interesting property in itself but we also point out that it may be valuable to study the well-posedness of nonlinear perturbations of the equation. 
 \begin{theorem} \label{thm:weakLp} For any complex-valued quadratic form $q$ on $\mathbb{R}^{2n}$ whose real part is nonnegative, denoting by $S$ its singular space (defined by \eqref{def:S}), we have 
\begin{equation}
\label{eq:tresjolilp} 
	S \cap  (\mathbb{R}^n  \times \{ 0 \})^\perp = \{0\} \ \iff \ \forall 1\leq \mathfrak{p}\leq \mathfrak{q} \leq \infty, \  e^{- q^w} \ \mathrm{is \ bounded \ from} \ L^{\mathfrak{p}}(\mathbb{R}^n) \ \mathrm{to} \ L^{\mathfrak{q}}(\mathbb{R}^n). 
\end{equation} 
\end{theorem}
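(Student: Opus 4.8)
The plan is to derive Theorem~\ref{thm:weakLp} from the more precise quantitative statements (Theorem~\ref{thm:maindir} and Theorem~\ref{thm:recip}) in essentially the same way Theorem~\ref{thm:weak} will be obtained. First I would explain the direction ``$\Rightarrow$''. Assuming $S \cap (\mathbb{R}^n \times \{0\})^\perp = \{0\}$, equivalently (Remark~\ref{rem:graph}) that $S$ is contained in the graph of some real $n\times n$ matrix $G$, the strategy is to reduce to the case $G=0$ by a symplectic change of variables. Concretely, conjugating $q^w$ by the metaplectic operator associated with the linear symplectomorphism $(x,\xi)\mapsto(x,\xi-Gx)$ (a "gauge transform" $u\mapsto e^{-\frac{i}{2}Gx\cdot x}u$ up to normalization) turns $q$ into a quadratic form $\tilde q$ whose singular space $\tilde S$ is contained in $\mathbb{R}^n\times\{0\}$, while the metaplectic conjugation is an isometry on every $L^{\mathfrak p}$ only for $\mathfrak p = 2$; however, since the gauge factor $e^{\pm\frac{i}{2}Gx\cdot x}$ has modulus one, multiplication by it \emph{is} an isometry on every $L^{\mathfrak p}(\mathbb R^n)$, $1\le \mathfrak p\le\infty$. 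Hence it suffices to prove the $L^{\mathfrak p}\to L^{\mathfrak q}$ boundedness when $S\subset\mathbb{R}^n\times\{0\}$, and for that I would invoke the quantitative kernel estimates of Theorem~\ref{thm:maindir}: when $S\subset\mathbb{R}^n\times\{0\}$, the Schwartz kernel of $e^{-tq^w}$ is a Gaussian that is genuinely decaying (not merely oscillatory) in the relevant variables, so Schur's test and interpolation give the full range $1\le\mathfrak p\le\mathfrak q\le\infty$, with operator norms controlled as $t^{-\alpha}$ for an explicit $\alpha$ depending on the Gevrey-type indices of the singular space as $0<t\ll1$.

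For the converse ``$\Leftarrow$'', I would argue by contraposition: suppose $S \cap (\mathbb{R}^n \times \{0\})^\perp \neq \{0\}$, so there exists a nonzero $(0,\xi_0)\in S$ with $\xi_0\in\mathbb R^n$. The point is that along such a purely frequency direction in the singular space, the semigroup fails to regularize and in particular fails to gain integrability; quantitatively, Theorem~\ref{thm:recip} should provide a sequence of test functions (e.g. suitably rescaled and modulated Gaussians, or WKB wave packets concentrated on the characteristic set) whose $L^{\mathfrak q}$ norm after applying $e^{-tq^w}$ is not controlled by their $L^{\mathfrak p}$ norm for some pair $\mathfrak p\le\mathfrak q$. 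A clean choice is to take $\mathfrak p=\mathfrak q=\infty$ and exhibit highly oscillatory bounded data whose image is unbounded, or to track the $L^1\to L^1$ norm which is sensitive to non-integrability of the Gaussian kernel in the $\xi_0$-direction. Either way, this contradicts the right-hand side of \eqref{eq:tresjolilp}, which is what we want.

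Finally I would note that since any single pair $(\mathfrak p,\mathfrak q)$ with $\mathfrak p<\mathfrak q$ already forces $S \cap (\mathbb{R}^n \times \{0\})^\perp = \{0\}$ (by the same contraposition argument, since the obstruction lives in a fixed direction independent of $\mathfrak p,\mathfrak q$), the equivalence in \eqref{eq:tresjolilp} is in fact sharp: the universally quantified statement on the right is equivalent to its instance at, say, $(\mathfrak p,\mathfrak q)=(1,\infty)$, and also to the $C^\infty$ statement of Theorem~\ref{thm:weak} via the inclusion of the Gaussian-kernel image in $C^\infty$. I expect the main obstacle to be the reduction step: verifying that the metaplectic/gauge conjugation by $e^{-\frac{i}{2}Gx\cdot x}$ transforms the singular space exactly as claimed — i.e. that $\tilde S = \{(x,\xi-Gx):(x,\xi)\in S\}\subset\mathbb R^n\times\{0\}$ — requires care with the definition \eqref{def:S} and the behaviour of $\Reelle Q$ and $\Imag F$ under symplectic conjugation, and secondarily, extracting from Theorem~\ref{thm:maindir} the precise form of the Gaussian kernel needed to run Schur's test uniformly in $t$ small.
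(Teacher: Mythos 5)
Your opening plan --- deduce the theorem from Theorem \ref{thm:maindir} (take $t=1$ and use Remark \ref{rem:graph} to pass from the condition $S\cap(\mathbb{R}^n\times\{0\})^\perp=\{0\}$ to the graph hypothesis) and from Theorem \ref{thm:recip} (whose hypothesis is implied by the instance $\mathfrak{p}=2$, $\mathfrak{q}>2$ of the right-hand side of \eqref{eq:tresjolilp}) --- is exactly how the paper obtains this statement, and had you stopped there the proof would be complete. The problems arise in your elaboration, which effectively tries to re-prove the two cited theorems and goes wrong at both key points.

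First, the reduction to $G=0$ is impossible in general. Multiplication by $e^{-\frac{i}{2}Gx\cdot x}$ equals multiplication by $e^{-\frac{i}{2}G^{(sym)}x\cdot x}$ and implements only the shear $(x,\xi)\mapsto(x,\xi-G^{(sym)}x)$; the map $(x,\xi)\mapsto(x,\xi-Gx)$ is not symplectic unless $G$ is symmetric, so no metaplectic operator realizes it. Worse, no symplectic change of variables can flatten the graph of a nonzero skew-symmetric matrix $N$ onto $\mathbb{R}^n\times\{0\}$: that graph satisfies $\sigma\bigl((x,Nx),(y,Ny)\bigr)=2Nx\cdot y$, so it is not Lagrangian, whereas $\mathbb{R}^n\times\{0\}$ is. Thus after your gauge transform you are left with $\widetilde S$ contained in the graph of $N=(G-\transp{G})/2$, which is precisely the situation the paper must treat with the twisted diffusions $(e^{-\gamma t^{\alpha}|\xi-Nx|^2})^w$ (Theorem \ref{thm:the_dec}, Proposition \ref{prop:miraculous}, Corollary \ref{cor:twisted_vs_schro}); your sketch assumes this hardest part away. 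Second, for the converse, the ``clean choices'' $\mathfrak{p}=\mathfrak{q}=\infty$ or the $L^1\to L^1$ norm cannot work: take $q(x,\xi)=x_1^2$, so that $S=\{x_1=0\}$ meets $\{0\}\times\mathbb{R}^n$ nontrivially, yet $e^{-q^w}$ is multiplication by $e^{-x_1^2}$ and is bounded on every single $L^{\mathfrak{q}}$ (more generally, Theorem \ref{thm:fifou} shows the self-adjoint factor is an $L^{\mathfrak{q}}$-contraction for every $\mathfrak{q}$ regardless of the geometric condition). The obstruction is only visible through a strict gain $\mathfrak{p}<\mathfrak{q}$ or through the failure of continuity, which is why Theorem \ref{thm:recip} and Proposition \ref{01102021P1} are formulated with $L^2\to L^{\mathfrak{q}}$, $\mathfrak{q}>2$, and a tensorized datum $f\otimes g_{\mathfrak{p}}$ whose non-integrable factor $g_{\mathfrak{p}}$ is propagated unchanged along the degenerate direction.
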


This theorem is a qualitative corollary of Theorems \ref{thm:maindir} and \ref{thm:recip} below. It extends the recent result of White \cite{Whi21b} which analyses this gains of integrability in the hypoelliptic case $S=\{0\}$. As we will see at the end of this introduction (see \eqref{eq:Gaussker}), when $S \cap  (\mathbb{R}^n  \times \{ 0 \})^\perp = \{0\}$ it can be proven that $e^{- q^w} $ is actually an integral transform associated with a complex Gaussian kernel (possibly degenerated). 
The gain of integrability of this kind of kernel has been widely studied (see e.g. \cite{Wei79,Lieb90,Neg95}). Nevertheless, it seems to us that Theorem \ref{thm:weakLp} cannot be directly deduced of these results (see the discussion about this point in subsection \ref{subsec:disc}).

Now, we shall give a quantitative version of Theorems \ref{thm:weak} and \ref{thm:weakLp}. As usual, we have to introduce the notion of \emph{global index} which quantify how fast are the smoothing effects. It is denoted by $k_0$ and is defined by
\begin{equation}\label{def:k_0} 
k_0 := \min \bigg\{ k \in \mathbb{N} \ | \ S = \bigcap_{\ell =0}^{k} \mathrm{Ker}(\Reelle Q) (\Imag F)^\ell \bigg\}.
\end{equation}
Note that, as a consequence of Cayley-Hamilton's theorem, we have $0\le k_0 \leq 2n-1$. The following theorem is the main result of this paper. It provides a quantitative estimate of the regularizing effects.

\begin{theorem} \label{thm:maindir} Let $q$ be a complex-valued quadratic form on $\mathbb{R}^{2n}$ whose real part is nonnegative, $S$ be its singular space (defined by \eqref{def:S}) and $k_0$ be its global index (defined by \eqref{def:k_0}).

If $S$ is included in the graph of a real $n \times n$ matrix $G$, i.e. $S \subset \{ (x,Gx) \ | \ x\in \mathbb{R}^n \}$, then for all $1 \leq \mathfrak{p} \leq \mathfrak{q} \leq +\infty$ and all $t>0$, $e^{-tq^w}$ can be extended by continuity in such a way that
$$e^{- tq^w} \ \ \mathrm{maps} \ \ L^{\mathfrak{p}}(\mathbb{R}^n) \ \ \mathrm{to} \ \ L^{\mathfrak{q}}(\mathbb{R}^n) \cap C^\infty(\mathbb{R}^n),$$
and, for all $ u \in L^{\mathfrak{p}}(\mathbb{R}^n)$ and $m \geq 0$, provided that $0<t\leq 1$, we have the quantitative estimate
\begin{equation}
\label{eq:mainest}
  \big\| (\langle G x \rangle + \langle \transp{G} x \rangle)^{-m} \mathrm{d}^m (e^{-tq^w} u) \big\|_{L^{\mathfrak{q}}} \leq  \frac{C^{1+m}}{t^{(k_0+\frac12)m + c_{\mathfrak p,\mathfrak q}}}\ \sqrt{m!}\ \|u\|_{L^{\mathfrak p}},
\end{equation}
where $\mathrm{d}$ denotes the Fr\'echet derivative and $C>0$ is a constant which is uniform with respect $t,\mathfrak{p},\mathfrak{q},u,m$ and
\begin{equation} \label{21102021E7} c_{\mathfrak p,\mathfrak q} = \left\{\begin{array}{ll}
	\frac n{2\mathfrak r}(2k_0 + \mathfrak r - 1) & \text{when $1\le\mathfrak r\le 2$,} \\[5pt]
	\frac n{2\mathfrak r}(2k_0+1)(\mathfrak r-1) & \text{when $\mathfrak r>2$,}
\end{array}\right. \end{equation}
with
$$\mathfrak r = (1- \mathfrak p^{-1} + \mathfrak q^{-1} )^{-1} \in [1,+\infty].$$
\end{theorem}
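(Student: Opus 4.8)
The plan is to reduce the statement to an explicit analysis of the integral kernel of $e^{-tq^w}$ via a symplectic normal form, and then to estimate this kernel by a (possibly degenerate) complex Gaussian for which $L^{\mathfrak p}\to L^{\mathfrak q}$ bounds follow from Young's convolution inequality after a linear change of variables. First I would invoke the structure theory of quadratic semigroups (Mehler-type formulas, as in Hörmander and Hitrik--Pravda-Starov): for $t>0$, $e^{-tq^w}$ is an oscillatory integral operator whose kernel $K_t(x,y)$ is of the form $a(t)\,e^{i\varphi_t(x,y)}$ with $\varphi_t$ a complex quadratic form in $(x,y)$ whose imaginary part is nonnegative. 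The hypothesis $S\subset\{(x,Gx)\}$ — equivalently $S\cap(\{0\}\times\mathbb R^n)=\{0\}$, i.e. $S$ is a graph over the $x$-space — should be exactly the condition guaranteeing that, after conjugating by the (unitary, $L^{\mathfrak p}$-isometric up to constants for all $\mathfrak p$) metaplectic-like change $u\mapsto e^{-\frac i2 Gx\cdot x}u$ which ``straightens'' the graph, the relevant block of $\Imag\varphi_t$ controlling the $y$-Gaussian decay is positive definite for small $t$, with a quantitative lower bound scaling like $t^{2k_0+1}$ coming from the definition of $k_0$. This is where I expect to lean heavily on the earlier results of \cite{AB21} and on the companion analysis in this paper (the ``exact splittings'' mentioned in the keywords), which presumably already produce such a factorization $e^{-tq^w}=M_G\, e^{-t\tilde q^w}\, M_G^{-1}$ where $\tilde q$ has singular space inside $\mathbb R^n\times\{0\}$, reducing everything to the case $G=0$.

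Second, in the reduced case $G=0$ (so $S\subset\mathbb R^n\times\{0\}$), I would write the kernel explicitly. Here $\Imag\varphi_t(x,y)$ is a nonnegative quadratic form that is coercive in the variable $x-R_t y$ (for some linear $R_t$) with a lower bound $\gtrsim t^{-\alpha}\langle\xi\rangle$-type behaviour; concretely one gets $|K_t(x,y)|\le |a(t)|\, e^{-c\,\Gamma_t(x,y)}$ where $\Gamma_t$ is a genuine (real, positive semidefinite) quadratic form. The modulus $|a(t)|$ is controlled by $\det$ of the relevant matrices, giving the $t^{-c_{\mathfrak p,\mathfrak q}}$-type prefactor; tracking the scaling of the Gaussian widths in each of the invariant subspaces where $\Imag F$ acts, and using that the ``slowest'' direction regularizes at rate $t^{k_0+1/2}$, yields the powers in \eqref{eq:mainest}. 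For the $L^{\mathfrak p}\to L^{\mathfrak q}$ bound, I would diagonalize/triangularize $\Gamma_t$ so that the operator becomes (up to invertible linear changes of variables with controlled Jacobians) a convolution with a Gaussian tensored with the identity in the degenerate directions; since in those degenerate directions the operator is already bounded $L^{\mathfrak p}\to L^{\mathfrak p}$ (it is a metaplectic transformation, and metaplectic operators associated to upper-triangular symplectic matrices are bounded on all $L^{\mathfrak p}$), and in the non-degenerate directions Young's inequality with the Gaussian gives $L^{\mathfrak p}\to L^{\mathfrak q}$ with norm $\lesssim t^{-\frac n{2}(1-\mathfrak p^{-1}+\mathfrak q^{-1})}\times(\text{powers of }t)$, interpolating/bookkeeping gives exactly the exponent $c_{\mathfrak p,\mathfrak q}$ in \eqref{21102021E7}. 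The split into the two cases $\mathfrak r\le 2$ and $\mathfrak r>2$ in \eqref{21102021E7} should come from whether the Gaussian's $L^{\mathfrak r}$-norm or its $L^\infty\cap L^1$ behaviour dominates — a standard dichotomy in sharp Young-type estimates for Gaussians, cf. \cite{Wei79,Lieb90}.

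Third, to get the derivative bounds, i.e. the full left-hand side of \eqref{eq:mainest} with $\mathrm d^m(e^{-tq^w}u)$ weighted by $(\langle Gx\rangle+\langle {}^{\mathrm t}Gx\rangle)^{-m}$, I would differentiate the kernel: $\partial_x^\beta K_t(x,y)$ is $K_t(x,y)$ times a polynomial in $(x,y)$ of degree $|\beta|$ with coefficients of size $\lesssim t^{-(k_0+1/2)|\beta|}$ — again this is where $k_0$ enters, each derivative ``costs'' one factor of the inverse Gaussian width. The polynomial factor in $x$ is absorbed, after the graph-straightening conjugation, into the weight $(\langle Gx\rangle+\langle {}^{\mathrm t}Gx\rangle)^{-m}$ (the weight is precisely designed to kill the growth coming from the oscillatory phase $e^{-\frac i2 Gx\cdot x}$ when one differentiates it $m$ times), while the polynomial factor in $y$ is integrated against $u\in L^{\mathfrak p}$ using that $y^\gamma e^{-c\Gamma_t}$ still has controlled $L^{\mathfrak r'}$-norm. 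Summing over $|\beta|=m$ introduces a combinatorial factor bounded by $C^m\sqrt{m!}$ (the $\sqrt{m!}$ rather than $m!$ is the Gelfand--Shilov-type gain characteristic of Gaussian kernels — the key point is that $\|y^\gamma e^{-\Gamma}\|\lesssim C^{|\gamma|}\sqrt{\gamma!}$). The main obstacle, I expect, is the first step: establishing the quantitative Gaussian lower bound on $\Imag\varphi_t$ with the precise exponent $t^{2k_0+1}$ and correctly identifying how the block structure of the Hamilton map $F$ relative to the singular space $S$ distributes the regularization rates across directions — in other words, turning the algebraic definition \eqref{def:S}--\eqref{def:k_0} into analytic control of the kernel. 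Once that is in hand, the $L^{\mathfrak p}$--$L^{\mathfrak q}$ and derivative estimates are, modulo careful bookkeeping of the exponents, an exercise in Young's inequality and Gaussian moment bounds.
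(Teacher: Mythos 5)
Your overall strategy --- write $e^{-tq^w}$ as an integral operator with complex Gaussian kernel $K_t$, bound $|K_t(x,y)|$ by a real positive semidefinite Gaussian $e^{-c\Gamma_t(x,y)}$, and conclude by Young's inequality --- is precisely the ``alternative approach'' that the paper discusses and rejects in subsection \ref{subsec:disc}, and the step where you pass to the modulus of the kernel is where the argument breaks. Under the hypothesis $S\subset\{(x,Gx)\}$ the kernel is indeed of the form \eqref{eq:Gaussker} with $\Reelle k(x,y)=v(\frac{x+y}2)+p(Mx-Ny)$, but $v$ can vanish identically (this happens whenever $\dim S=n$, the maximal dimension compatible with the graph condition) and $M$ or $N$ can be singular. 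In that case $e^{-c\Gamma_t(x,y)}$ is constant along a nontrivial affine subspace of $y$'s for fixed $x$, so it is not integrable in $y$ and Young's inequality gives nothing: the $L^{\mathfrak p}\to L^{\mathfrak q}$ bound, when it holds, comes from cancellation in the complex phase, which is destroyed the moment you take absolute values. Your proposed fallback --- that ``in the degenerate directions the operator is a metaplectic transformation associated to an upper-triangular symplectic matrix, hence bounded on all $L^{\mathfrak p}$'' --- is exactly what fails: the residual operator in those directions is a (possibly degenerate) Schr\"odinger propagator $e^{itD\nabla\cdot\nabla}$, which is unbounded on $L^{\mathfrak p}$ for $\mathfrak p\neq 2$. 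The entire technical content of the paper's proof is the compensation mechanism: the decomposition of Theorem \ref{thm:the_dec} isolates a twisted diffusion $(e^{-\gamma t^{\alpha}|\xi-Nx|^2})^w$ sitting to the left of $e^{itD_t\nabla\cdot\nabla}$, and Proposition \ref{prop:miraculous} shows that this \emph{product} (not each factor separately) is dominated by a genuine nondegenerate Gaussian convolution composed with $x\mapsto x-DNx$; this is a computation with the full complex kernel, not its modulus.

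A secondary error: your reduction to $G=0$ via conjugation by $e^{-\frac i2Gx\cdot x}$ does not work. That conjugation is the symplectic shear $(x,\xi)\mapsto(x,\xi+G^{(sym)}x)$, so it only removes the symmetric part of $G$; the singular space of the conjugated form is the graph of the skew-symmetric part $N=(G-\transp{G})/2$, which in general cannot be removed by any $L^{\mathfrak p}$-isometric conjugation. This is why $N$ survives into the twisted diffusions of \eqref{eq:the_dec} and why the weight in \eqref{eq:mainest} involves both $Gx$ and $\transp{G}x$. The derivative bounds and the $\sqrt{m!}$ Gelfand--Shilov gain in your third step are plausible in outline and close to what the paper does in Proposition \ref{prop:smoothtwist} and Corollary \ref{cor:localsmooth}, but they are downstream of the boundedness question and cannot be salvaged until the compensation between the twisted diffusion and the dispersion is established.
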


\begin{remark} Due to the factor $(\langle G x \rangle + \langle \transp{G} x \rangle)^{-m}$ in \eqref{eq:mainest}, the smoothing effects are only local. However, when $S \subset \mathbb{R}^n  \times \{ 0 \}$ (as in \eqref{eq:joli}), we can choose $G=0$ and so \eqref{eq:mainest} proves that the smoothing effects are global.
\end{remark}
\begin{remark} In \cite[Thm 1.1]{Whi21b}, White proves that whenever $S=\{0\}$, $e^{- tq^w}$ is bounded from $L^{\mathfrak{p}}(\mathbb{R}^n)$ to $L^{\mathfrak{q}}(\mathbb{R}^n)$ (with $1 \leq \mathfrak{p} \leq \mathfrak{q} \leq +\infty$) and enjoys the bound $\| e^{- tq^w}\|_{L^{\mathfrak{p}} \to L^{\mathfrak{q}}} \lesssim t^{-(2k_0+1)n} $. Observing that in any case, $c_{\mathfrak{p},\mathfrak{q}} \leq \frac12 (2k_0+1)n$, the bound given by Theorem \ref{thm:maindir} improves the exponent given in \cite{Whi21b} of a factor at least equal to $1/2$.
\end{remark}

Finally, we provide a version of the reciprocal of Theorems \ref{thm:weak} and \ref{thm:weakLp}. 

\begin{theorem} \label{thm:recip}For any complex-valued quadratic form $q$ on $\mathbb{R}^{2n}$ whose real part is nonnegative, denoting by $S$ its singular space (defined by \eqref{def:S}), if
$$
e^{- q^w} \ \mathrm{maps} \ L^{2}(\mathbb{R}^n) \ \mathrm{to} \ C^0(\mathbb{R}^n) \quad \mathrm{or} \quad   \exists \mathfrak{q}>2, \quad  e^{- q^w} \ \mathrm{maps} \ L^{2}(\mathbb{R}^n) \ \mathrm{to} \ L^{\mathfrak{q}}(\mathbb{R}^n),
$$
then
$
  S \cap  (\mathbb{R}^n  \times \{ 0 \})^\perp = \{0\}.
$
\end{theorem}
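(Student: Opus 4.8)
The plan is to prove the contrapositive: assuming $S \cap (\mathbb{R}^n \times \{0\})^\perp \neq \{0\}$, I show that $e^{-q^w}$ maps $L^2(\mathbb{R}^n)$ neither into $C^0(\mathbb{R}^n)$ nor, for any $\mathfrak{q} > 2$, into $L^{\mathfrak{q}}(\mathbb{R}^n)$. By Lemma \ref{lem:graph} the hypothesis provides $\xi_0 \in \mathbb{R}^n \setminus \{0\}$ with $(0,\xi_0) \in S$. The starting point is structural: $\Reelle Q$ vanishes on $S$ and $S$ is stable under $\Imag F$, so $F$ acts on $S$ as $i(\Imag F)|_S$ and the complexified Hamilton flow $e^{-2itF}$ restricts to $S$ as the \emph{real} symplectic flow $e^{2t(\Imag F)|_S}$. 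Feeding $(0,\xi_0) \in S$ into the Heisenberg--Weyl covariance of the quadratic semigroup therefore yields an honest intertwining $e^{-q^w}\,e^{i\lambda \xi_0\cdot x} = \varkappa_\lambda\,\mathcal{T}_\lambda\,e^{-q^w}$ for every $\lambda \in \mathbb{R}$, with $\varkappa_\lambda$ a unimodular scalar and $\mathcal{T}_\lambda$ the composition of a translation and a modulation along a real vector of $S$ --- in particular an isometry of every $L^{\mathfrak{r}}(\mathbb{R}^n)$. Running the same computation on the adjoint $e^{-\overline{q}^w}$ (which has the same singular space) and reading it off the Schwartz kernel $K \in \mathcal{S}'(\mathbb{R}^{2n})$ of $e^{-q^w}$ gives a companion relation acting on the $y$-variable, $K(x,y) = \varkappa\,e^{-i(\beta\cdot y - \xi_0\cdot x)}K(x, y - \alpha)$ for a real vector $(\alpha,\beta) \in S \setminus \{0\}$; iterating it forces the dichotomy: either $\alpha = 0$ and $K$ is supported in an affine hyperplane $\{\beta\cdot y - \xi_0\cdot x = \theta\}$ (a ``Dirac factor''), or $\alpha \neq 0$ and $y \mapsto |K(x,y)|$ is invariant under the nontrivial translation line $\mathbb{R}\alpha$.

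From this the $C^0$ statement follows at once: if $e^{-q^w}$ mapped $L^2$ into $C^0$, the closed graph theorem for $L^2 \to C^0_{\mathrm{loc}}$ would make every point evaluation $u \mapsto (e^{-q^w}u)(x_0)$ a bounded functional on $L^2$, i.e. $K(x_0,\cdot) \in L^2(\mathbb{R}^n_y)$ for each $x_0$, whereas both branches of the dichotomy preclude $K(x_0,\cdot) \in L^2_y$ (a Dirac layer is never square-integrable; a nonzero function invariant under a translation line has infinite $L^2$ norm), contradicting $K \not\equiv 0$. The Dirac branch also settles the integrability statement: after an $L^{\mathfrak{r}}$-bounded linear change of position variables $e^{-q^w}$ factors as a bounded multiplication composed with a translation and an integral operator acting trivially in one variable, so a product test function carrying a local singularity $|x_n - a|^{-\beta}$, with $1/\mathfrak{q} \le \beta < 1/2$, placed where the transverse factor does not vanish, yields $u \in L^2$ with $e^{-q^w}u \notin L^{\mathfrak{q}}$.

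The case that remains --- excluding $e^{-q^w}\colon L^2 \to L^{\mathfrak{q}}$ with $2 < \mathfrak{q} < \infty$ when $K$ carries no Dirac factor --- is, I expect, the crux, since there the ``$K(x,\cdot)\notin L^2_y$'' observation only rules out $\mathfrak{q}=\infty$. The mechanism to exploit is that $(0,\xi_0) \in S$ forces $e^{-q^w}$ to act invertibly and metaplectically in the $\xi_0$-direction, so that pre-composing with an appropriate metaplectic element in that direction and a Schwartz profile transversally realizes, inside $e^{-q^w}(L^2)$, functions with a prescribed local singularity $|x_n - a|^{-\beta}$ ($1/\mathfrak{q}\le\beta<1/2$) along $\xi_0$ --- functions lying in $L^2$ but not in $L^{\mathfrak{q}}$. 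To implement this rigorously I would combine the symplectic reduction $q = q_{|S} \oplus^{\sigma} q_{|S^{\sigma\perp}}$ of Hitrik and Pravda-Starov ($q_{|S}$ purely imaginary, so $e^{-q_{|S}^w}$ is metaplectic and bijective; $q_{|S^{\sigma\perp}}$ with trivial singular space, so $e^{-q_{|S^{\sigma\perp}}^w}$ is regularizing and injective) with the explicit, possibly degenerate, Gaussian kernel of \eqref{eq:Gaussker}, estimating that Gaussian integral by hand. The delicate point is that the metaplectic intertwiner realizing the reduction is not bounded on $L^{\mathfrak{q}}$ for $\mathfrak{q}\neq 2$, so one cannot transport a counterexample from the normal form: the test functions must be built directly in the original coordinates, where the translation--modulation covariance along $\xi_0$ --- available precisely because $\xi_0$ is a momentum direction --- substitutes for the clean tensor decomposition available in the normal form.
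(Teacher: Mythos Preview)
Your argument via Heisenberg--Weyl covariance along the singular space is a genuinely different route from the paper's, and the $C^0$ portion is essentially correct. The real gap is the $L^{\mathfrak{q}}$ case, $2<\mathfrak{q}<\infty$, in what you call the non-Dirac branch ($\alpha\neq 0$): you yourself flag this as ``the crux'' and offer only a plan, correctly identifying the obstruction that the metaplectic intertwiner realizing the Hitrik--Pravda-Starov reduction is not bounded on $L^{\mathfrak{q}}$ for $\mathfrak{q}\neq 2$. This branch is not exceptional --- it occurs whenever $e^{2\Imag F}(0,\xi_0)$ has nonzero position component, which is generic for non-selfadjoint $q$ --- so the proposal as written does not cover the theorem.

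The paper's proof sidesteps this difficulty entirely by first invoking the polar decomposition $e^{-q^w}=e^{-a^w}U$ from \cite{AB21}, with $U$ unitary on $L^2(\mathbb{R}^n)$ and $a$ a real nonnegative quadratic form whose isotropic cone is exactly $S$ (Proposition~\ref{prop:singspacepolar}). Since $U$ is an $L^2$-isometry, one only needs counterexamples for $e^{-a^w}$. But when $a$ is real and nonnegative, $(0,\xi_0)$ lying in its isotropic cone forces $a$ to be independent of the $\xi_0$-component of $\xi$; after a linear change of the space variables one may take $\xi_0=e_n$, and then $e^{-a^w}$ acts fibrewise in $x_n$: $e^{-a^w}(f\otimes g)(x',x_n)=(e^{-a_{x_n}^w}f)(x')\,g(x_n)$ (Lemma~\ref{02092021L1}). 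In your terminology this is precisely the Dirac-factor branch, and for real $a$ it is the \emph{only} branch; the tensor-product counterexamples (take $g\in L^2(\mathbb{R})\setminus C^0(\mathbb{R})$, respectively $g\in L^2(\mathbb{R})\setminus L^{\mathfrak{q}}(\mathbb{R})$, and $f\in\mathscr{S}(\mathbb{R}^{n-1})$ with $e^{-a_0^w}f\not\equiv 0$, which exists by injectivity) then work immediately. The unitary $U$ is exactly what absorbs the complex rotation that produced your troublesome $\alpha\neq 0$ branch, so the polar decomposition is not a detour but the device that collapses your dichotomy to its easy case.
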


\subsection{Heuristic} Now, we discuss the heuristic behind our results and our strategy of proof.

First, let us focus on the reciprocals, i.e. Theorem \ref{thm:recip}. In \cite[Thm 2.1]{AB21}, we have proven that the polar decomposition of $e^{- q^w}$ is of the form
\begin{equation}
\label{eq:ours}
e^{- q^w} = e^{-a^w} U, 
\end{equation}
where $U$ is unitary on $L^2(\mathbb{R}^n)$ and $a$ is a real-valued nonnegative quadratic form (i.e. $a^w$ is self-adjoint) about which we have proven a lot of properties. In particular, using the tools introduced in \cite{AB21}, it can be proven quite easily that $S$ is the isotropic cone\footnote{Recall that the isotropic cone of a quadratic form $q:\mathbb R^{2n}\rightarrow\mathbb C$ is defined by $C(q) = \big\{X\in\mathbb R^{2n} : q(X) = 0\big\}$.} of $a$ (see Proposition \ref{prop:singspacepolar} in the appendix).  If $S \cap  (\mathbb{R}^n  \times \{ 0 \})^\perp \neq \{0\}$, up to a change of variable (in the physical space $\mathbb{R}^n$), we can assume without loss of generality that $(0,e_n) \in S$ (where $(e_j)_j$ is the canonical basis of $\mathbb{R}^n$). Since $a$ vanishes on $S$ and $a$ is non-negative, this means that $a$ does not depend on $\xi_n$ or in other words, that $\partial_{x_n}$ does not appear in $a^w$. Coming back to the polar decomposition \eqref{eq:ours} of $e^{-q^w}$, it is clear that since $U$ is unitary, it is not regularizing. Moreover, since $\partial_{x_n}$ does not appear in $a^w$, it is also quite intuitive that $e^{-a^w}$ (and so $e^{-q^w}$) cannot provide any regularizing effect with respect to the variable $x_n$. The rigorous proofs of these reciprocals are presented in Section \ref{sec:reciprocals}.

\medskip

Now, we focus on the heuristic of the proof of the regularizing effects, i.e. Theorem \ref{thm:maindir}. The most difficult part consists in proving that, under the geometrical assumption on the singular space, $e^{- q^w}$ maps $L^{\mathfrak{p}}(\mathbb{R}^n)$ on $L^{\mathfrak{q}}(\mathbb{R}^n)$ when $1 \leq \mathfrak{p} \leq \mathfrak{q} \leq +\infty$. Indeed, the operator $U$ in  \eqref{eq:ours} can generate a lot of trouble. For instance, $U$ could be the evolution operator of the Schr\"odinger equation, i.e. $U=e^{i \Delta}$, and so it would not be well-defined on $L^{\mathfrak{p}}$ when $\mathfrak{p} > 2$ and would map $L^{\mathfrak{p}}$ on $L^{\mathfrak{p}'}$  when $\mathfrak{p} \leq 2$ (where $(\mathfrak{p}')^{-1} + (\mathfrak{p})^{-1} = 1$). We could even imagine worst situations where $U$ would be the evolution operator of a degenerated\footnote{i.e. $U=e^{ i D\nabla \cdot \nabla}$ where $D$ is symmetric but not not invertible.} Schr\"odinger equation. Therefore, we have to prove that $e^{-a^w}$ is able to compensate such bad behaviors. Of course, the best situation would be to be able to decompose $e^{-a^w}$ under the form $B e^{- P \nabla \cdot \nabla}$, with $P$ a positive $n\times n$ symmetric real matrix and $B$ a bounded operator on $L^{\mathfrak{q}}$. Indeed, an operator of the form $ e^{- K \nabla \cdot \nabla}$, where $K=P+iD$ is a complex symmetric matrix whose real part is positive, has the same regularizing effects as the evolution operator of the heat equation. Unfortunately, we did not succeed to prove that such a decomposition holds, and we conjecture that in general, it does not exist. Therefore, in Section \ref{sec:dec}, we prove a new kind of decomposition of $e^{-q^w}$ which is finer than \eqref{eq:ours} but holds only under the geometrical assumption $S \cap  (\mathbb{R}^n  \times \{ 0 \})^\perp = \{0\}$. More precisely, if $G$ a real $n\times n$ matrix whose graph contains $S$, then, provided that $t$ is small enough\footnote{here it is relevant to take into account the parameter $t$ in order to have quantitative estimates. In order to get the regularizing effects of $e^{-tq^w}$, it is sufficient to use the semigroup property.}, we prove in Theorem \ref{thm:the_dec} that the following decomposition holds
\begin{equation}
\label{eq:madecintro}
e^{-tq^w} = c_t \, e^{ \frac{i}2 G x \cdot x} (e^{-\gamma t^\alpha |\xi -  N x|^2 })^w e^{-t p_t^w} (e^{-\gamma t^{\alpha} |\xi - Nx|^2 })^w e^{i t D_t \nabla \cdot \nabla} e^{tM_t x \cdot \nabla}  e^{\frac{i}2 (tW_t-  G ) x \cdot x},
\end{equation}
where $\gamma>0$ is a constant that does not depend on $t$, $\alpha = 2k_0+1$, $N = (G-\transp{G})/2$ denotes the skew-symmetric part of $G$, $D_t,M_t,W_t$ are some real $n\times n$ matrices, $c_t$ is a real constant and $p_t$ is a real-valued nonnegative quadratic form, all depending smoothly on $t$. The pseudo-differential operator $(e^{-\gamma t^{\alpha} |\xi - Nx|^2 })^w $ is defined as usual through an oscillatory integral (see \eqref{eq:defWeyl}). 
The existence of such decompositions is strongly related to the exact classical-quantum correspondence (see Prop \ref{prop:split}). Thanks to the decomposition \eqref{eq:madecintro}, it is much easier to understand the action of $e^{-tq^w}$ on $L^{\mathfrak{p}}$. Indeed, we just have to analyse the action of each factor (this is done in Section \ref{sec:Gauss}):
\begin{enumerate}[label=\textbf.,leftmargin=* ,parsep=2pt,itemsep=0pt,topsep=2pt]
	\item[$\cdot$] $e^{\frac i2 (tW_t-  G ) x \cdot x}$ is an isometry on $L^{\mathfrak p}$, so it plays no role.
	\item[$\cdot$] $e^{tM_t x \cdot \nabla}$ is the flow of a transport equation, so it is invertible on $L^{\mathfrak p}$ and is close to the identity, so it plays no role either.
	\item[$\cdot$] As explained previously, $e^{i t D_t \nabla \cdot \nabla}$ is the most dangerous term. We have to take into account the action of the next term to analyse it.
	\item[$\cdot$] In Section \ref{sec:Gauss}, we establish several results proving that, somehow, $(e^{-\gamma t^{\alpha} |\xi - Nx|^2 })^w$ behaves like $e^{\gamma t^{\alpha} \Delta }$. More precisely, we prove in Corollary \ref{cor:Ng} that this operator is nothing but the following integral transform with Gaussian kernel
\begin{equation}\label{eq:introreftwisted}
	(e^{-\gamma t^{\alpha}|\xi - Nx|^2 })^w u(x) =(4\pi \gamma t^{\alpha} )^{-n/2}  \int_{\mathbb R^n} e^{-\frac1{4\gamma t^{\alpha}} |x-y|^2 + i (x-y) \cdot N x }u(y)  \, \mathrm{d}y. 
\end{equation}
In particular, it allows us to prove in Corollary \ref{cor:twisted_vs_schro} that $(e^{-\gamma t^{\alpha} |\xi - Nx|^2 })^w e^{i t D_t \nabla \cdot \nabla}$ maps $L^{\mathfrak{p}}$ to $L^{\mathfrak{q}}$.
	\item[$\cdot$] Since $p_t$ is real-valued, thanks to the Mehler formula, we prove in Theorem \ref{thm:fifou} that $e^{-t p_t^w}$ is bounded on $L^{\mathfrak{q}}$. So it plays no role.
	\item[$\cdot$] In Section \ref{sec:localsmooth}, we prove that  $e^{ \frac{i}2 G x \cdot x} (e^{-\gamma t^\alpha |\xi -  N x|^2 })^w$ is locally smoothing (i.e. that it maps $L^{\mathfrak{q}}$ to $L^{\mathfrak{q}} \cap C^\infty$) which is quite natural if we keep in mind that $(e^{-\gamma t^{\alpha} |\xi - Nx|^2 })^w$ behaves like $e^{\gamma t^{\alpha} \Delta }$ (or considering directly the formula \eqref{eq:introreftwisted}).
\end{enumerate}

\subsection{Further discussions about an alternative approach} \label{subsec:disc}

Here, in the spirit of our previous works \cite{AB20,Ber21,AB21}, we have chosen to study the regularizing effects by decomposing $e^{-tq^w}$ as a product of evolution operators which are simple to analyse, but it is not the only possible approach. In particular, now, we would like to mention and discuss the difficulties related to another one which seems however very natural. Indeed, thanks to the Mehler formula (see Theorem \ref{thm:Meh}) established by H\"ormander in \cite{Hor95}, $e^{-q^w}$ is actually a pseudo-differential operator whose symbol is given\footnote{up to an algebraic condition.} by $c e^{- m}$ where $c\in\mathbb C$ is a constant and $m$ is a complex valued quadratic form on $\mathbb R^{2n}$. Moreover, $\Reelle m$ is nonnegative and its isotropic cone is the singular space $S$ (see \cite[Lemma 3.1]{Alp20a}). Therefore, decomposing $m$ as
$$
m(x,\xi) =  \frac12 r(x) + L x \cdot \xi + \frac12 b(\xi),
$$
the geometric condition $S \cap  (\mathbb{R}^n  \times \{ 0 \})^\perp = \{0\}$ implies that $\Reelle b$ is positive. Consequently, using the definition of $(e^{-m})^w$ as an oscillatory integral (see \eqref{eq:defWeyl}), long and tedious computations\footnote{that we do not detail in this paper, due to their length. However, they are somehow similar to the ones in the proof of Corollary \ref{cor:g}.} allow to prove that $e^{-q^w}$ is nothing but an integral transform with Gaussian kernel of the form
\begin{equation}
\label{eq:Gaussker}
e^{-q^w}u(x) = \widetilde{c} \int_{\mathbb{R}^n} e^{-\frac12 k(x,y) }u(y) \, \mathrm{d}y,
\end{equation}
where $\widetilde{c} \in \mathbb{C}$ is a complex constant, and $k$ is a complex-valued quadratic form whose real part is nonnegative, and takes the form
$$
\Reelle k(x,y) = v\Big(\frac{x+y}2\Big) + p( Mx - Ny). 
$$
Here, $p$ (resp. $v$) is a positive (resp. nonnegative\footnote{Indeed since $m$ is nonnegative it is a consequence of \eqref{eq:canon}.}) real-valued quadratic form whose matrix, denoted $P$ (resp. $V$), is given by
$$
P  =  ( \Reelle B + (\Imag B) (\Reelle B)^{-1} (\Imag B) )^{-1}   \quad \mathrm{and} \quad V = \Reelle R - (\Reelle L) (\Reelle B)^{-1} (\Reelle L),
$$
where $R$ and $B$ denote respectively the matrices of $r$ and $b$, and $M,N$ are the real $n\times n$ matrices given by
$$
M = I_n - \frac12 \Imag L + \frac12 (\Imag B) (\Reelle B)^{-1} (\Reelle L)  \quad \mathrm{and} \quad N=I_n + \frac12 \Imag L - \frac12 (\Imag B) (\Reelle B)^{-1} (\Reelle L).
$$
Since we know that $\Reelle b$ is positive, $\Reelle  m$ is nonnegative and $S$ is the isotropic cone of $\Reelle  m$, it follows that
\begin{equation}
\label{eq:canon}
2 \Reelle  m(x,\xi ) = v(x) + \Reelle b(\xi + (\Reelle B)^{-1}(\Reelle L) x),
\end{equation}
then $v$ is typically degenerated and the dimension of its kernel is
$$
\mathrm{dim} \, \mathrm{Ker} \, V = \mathrm{codim} \, S.
$$
Therefore, $v$ could vanish identically\footnote{i.e. we could have $\mathrm{dim} \, S = n$ (which is the maximum with respect to the assumption $S \cap  (\mathbb{R}^n  \times \{ 0 \})^\perp = \{0\}$).} and so we should not expect any regularizing effect coming from $v$. Conversely, if both the matrices $M$ and $N$ are invertible, thanks to the integral representation  \eqref{eq:Gaussker}, it is obvious\footnote{the proof is the same as for the heat equation.} that $e^{-q^w}$ is locally smoothing and maps $L^{\mathfrak{p}}$ in $L^{\mathfrak{q}}$ when $1\leq \mathfrak{p} \leq \mathfrak{q} \leq \infty$. However, else if $M$ or $N$ is non-invertible, it is not sufficient to consider the real part of $k$ to study the regularizing effects of $e^{-q^w}$, and so the situation is much more intricate: the role of the complex phase has to be taken into account. Moreover, it seems that there is another obstacle to study the regularizing effects of $e^{-q^w}$ by following this way: the quadratic form $m$ enjoys some specific algebraic properties which should be determined and taken into account. For example, the real part of the Fokker-Plank quadratic form $m(x,\xi) =\frac12 \xi^2 - 2ix \xi$ is nonnegative, its isotropic cone $S = \mathbb{R} \times \{ 0 \}$ satisfies the geometric condition $S \cap  (\mathbb{R}  \times \{ 0 \})^\perp = \{0\}$, but a direct computation shows that
$$
\forall u \in \mathscr{S}(\mathbb{R}), \quad (e^{- \frac12 \xi^2 + 2ix \xi})^w u(x) = (2\pi)^{-1/2} e^{-x^2/2} \int_{\mathbb{R}}   u(y) \, \mathrm{d}y.
$$
Therefore, $(e^{- \frac12 \xi^2 + 2ix \xi})^w$ does not enjoy the regularizing effects given by Theorem \ref{thm:weakLp}, but it is not a counter-example to our results because, since it is unbounded on $L^2$, it cannot be of the form $e^{-q^w}$ with $\Reelle q \geq 0$.

\subsection{Notations} The following notations will be used all over the work:
\begin{enumerate}[label=\textbf{\arabic*.},leftmargin=* ,parsep=2pt,itemsep=0pt,topsep=2pt]
	\item The Weyl quantization of tempered distribution $a \in \mathscr{S}'(\mathbb{R}^{2n})$ is denoted by $a^w$ and is formally defined by the following oscillatory integral
\begin{equation}\label{eq:defWeyl}
	a^wu(x) =  (2\pi)^{-n} \iint_{\mathbb R^{2n}} e^{i(x-y)\cdot\xi} a\Big(\frac{x+y}2,\xi\Big) u(y)\, \mathrm dy\, \mathrm d\xi.
\end{equation}
	\item Implicitly, $\mathbb{R}^n$ is always equipped with its natural Euclidean structure: $|\cdot|$ is the Euclidean norm while $\cdot$ is the scalar product. We also extend this scalar product by analyticity to $\mathbb C^n$. The notation $\perp$ always stands for the Euclidean orthogonal complement.
	\item The convolution product between two functions $f$ and $g$ defined on $\mathbb R^n$ is denoted $f\ast g$ and given (when it makes sense) by
	$$(f\ast g)(x) = \int_{\mathbb R^n}f(x-y)g(y)\,\mathrm dy.$$
\end{enumerate}

\section{A new decomposition} \label{sec:dec}

This section is devoted to the proof of the following theorem in which we decompose the semigroups in factors which are much simpler to study.
\begin{theorem} \label{thm:the_dec} Let $q$ be a complex-valued quadratic form on $\mathbb{R}^{2n}$ whose real part is nonnegative, $S$ be its singular space (defined by \eqref{def:S}) and $k_0$ be its global index (defined by \eqref{def:k_0}).

If $S$ is included in the graph of a real $n \times n$ matrix $G$, i.e. $S \subset \{ (x,Gx) \ | \ x\in \mathbb{R}^n \}$, then there exist two positive constants $\gamma,t_0 > 0$, some real-valued nonnegative quadratic forms $p_t$, some real $n \times n$ matrices $D_t,M_t,W_t$ and some real constants $c_t $, all depending
smoothly on $t\in(-t_0,t_0)$, such that, for all $t\in [0,t_0)$, the following decomposition holds
\begin{equation}
\label{eq:the_dec}
e^{-tq^w} = c_t \, e^{ \frac{i}2 G x \cdot x} (e^{-\gamma t^\alpha |\xi -  N x|^2 })^w e^{-t p_t^w} (e^{-\gamma t^{\alpha} |\xi - Nx|^2 })^w e^{i t D_t \nabla \cdot \nabla} e^{tM_t x \cdot \nabla}  e^{\frac{i}2 (tW_t-  G ) x \cdot x},
\end{equation}
where $\alpha = 2k_0+1$, $N = (G-\transp{G})/2$ denotes the skew-symmetric part of $G$. 
\end{theorem}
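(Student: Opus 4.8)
The plan is to reduce the claimed decomposition to the exact classical--quantum correspondence (Proposition~\ref{prop:split}) by finding an explicit factorization of the Hamiltonian flow $e^{-2tF}$, where $F=JQ$ is the Hamilton map of $q$; each factor on the right-hand side of \eqref{eq:the_dec} should be the quantization of an explicit symplectic (or affine-symplectic) transformation, or of a Gaussian symbol, and the metaplectic/exact-splitting machinery then transports the factorization from the level of the classical flow to the level of the evolution operators. Concretely, recall from the polar decomposition \eqref{eq:ours} and Proposition~\ref{prop:singspacepolar} that $S$ is the isotropic cone of the nonnegative quadratic form $a$; the hypothesis $S\subset\{(x,Gx)\}$ guarantees that, after conjugation by the metaplectic operator associated with the symplectic shear $(x,\xi)\mapsto(x,\xi-Gx)$ (which at the operator level is exactly $e^{\frac i2 Gx\cdot x}$ up to the symmetric/skew-symmetric split that produces the two factors $e^{\frac i2 Gx\cdot x}$ and $e^{\frac i2(tW_t-G)x\cdot x}$), the transformed operator has its singular space inside $\mathbb R^n\times\{0\}$, i.e. the "good" hypoelliptic-in-$\xi$ situation. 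So the first step is this normalization: conjugate by the shear and reduce to $N=(G-\transp G)/2$, absorbing the skew part into the twisted Gaussian symbol $e^{-\gamma t^\alpha|\xi-Nx|^2}$ and the symmetric part into the boundary phases.

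Next I would build the inner factor $(e^{-\gamma t^\alpha|\xi-Nx|^2})^w\,e^{-tp_t^w}\,(e^{-\gamma t^\alpha|\xi-Nx|^2})^w$. The idea is to split off from $e^{-tq^w}$ a genuinely regularizing Gaussian multiplier in the $\xi$-directions that complement $S$: because $k_0$ is the global index, the real part $\Reelle q$ "feels" all of $S^\perp$ only after $k_0$ brackets with $\Imag F$, which is precisely what forces the scaling $t^\alpha$ with $\alpha=2k_0+1$ (this is the same $t^{2k_0+1}$ threshold that appears in the known subelliptic estimates for these semigroups, e.g. in \cite{HP09,Alp20a}). Quantitatively one wants to write $q = \gamma t^{\alpha-1}|\xi-Nx|^2 \text{-type piece} + p_t + (\text{commutator corrections})$ in a $t$-dependent way; the exact splitting Proposition~\ref{prop:split} should let us realize the conjugation $(e^{-\gamma t^\alpha|\xi-Nx|^2})^w e^{-tp_t^w}(e^{-\gamma t^\alpha|\xi-Nx|^2})^w$ as the quantization of the corresponding composition of the Hamiltonian flows, the symmetric placement of the two Gaussian factors being dictated by the requirement that $p_t$ be \emph{real-valued} (self-adjoint conjugation preserves this). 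The remaining factors $e^{itD_t\nabla\cdot\nabla}$ and $e^{tM_tx\cdot\nabla}$ are the quantizations of, respectively, the symplectic map $(x,\xi)\mapsto(x+2tD_t\xi,\xi)$ and the symplectic dilation $(x,\xi)\mapsto(e^{-tM_t}x, \transp{e^{tM_t}}\xi)$; these collect the part of the flow $e^{-2tF}$ that is not accounted for by the Gaussian/$p_t$ block, and the smoothness in $t\in(-t_0,t_0)$ of $D_t,M_t,W_t,c_t,p_t$ follows from the analytic (indeed entire) dependence of $e^{-2tF}$ on $t$ together with an implicit-function / Gaussian-decomposition argument valid for $|t|<t_0$.

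The main obstacle, and the step I would spend the most care on, is the \emph{existence and smoothness} of the algebraic decomposition of the flow: one must show that for $t$ small the $2n\times 2n$ matrix $e^{-2tF}$ (or the associated quadratic form, via Mehler's formula Theorem~\ref{thm:Meh}) factors as the product of the five elementary symplectic/Gaussian building blocks with the prescribed shapes, with the Gaussian block carrying exactly the weight $\gamma t^\alpha$ and with all parameters depending smoothly on $t$ and the constant $\gamma>0$ independent of $t$. This is essentially a Bruhat-type / block-$LU$ decomposition in $\mathrm{Sp}(2n,\mathbb R)$ (enlarged to include the non-symplectic Gaussian smoothing factors), and the delicate points are (i) checking that the relevant blocks are invertible for small $t$ — which is where the precise power $t^{2k_0+1}$ enters, since the block that must be inverted degenerates exactly to order $k_0$ in $t$ by definition of the global index — and (ii) controlling the positivity ($p_t\ge 0$, $\gamma>0$) uniformly, using that $\Reelle q\ge 0$ and $S$ is the isotropic cone of $a$. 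Once the matrix identity is established with the stated regularity, Proposition~\ref{prop:split} (the exact classical--quantum correspondence) upgrades it to the operator identity \eqref{eq:the_dec}, the scalar $c_t$ absorbing the metaplectic cocycle/normalization constant, and one checks $c_0=1$ so that the formula is consistent at $t=0$. I would finish by verifying directly that at $t=0$ both sides equal the identity and that the right-hand side is strongly continuous in $t$, so that the identity, proved a priori for small $t>0$, extends to all $t\in[0,t_0)$.
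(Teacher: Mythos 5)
Your overall architecture matches the paper's: conjugate by the shear $e^{\frac i2 Gx\cdot x}$ to reduce to the skew\--symmetric case, invoke the polar decomposition $e^{-tq^w}=e^{-ta_t^w}e^{-itb_t^w}$ of \cite{AB21}, split the unitary factor $e^{-itb_t^w}$ into $e^{itD_t\nabla\cdot\nabla}e^{tM_tx\cdot\nabla}e^{\frac i2 tW_tx\cdot x}$ by a local\--inversion argument in $\mathfrak{sp}_{2n}(\mathbb R)$, and transfer everything to the operator level via Proposition~\ref{prop:split}. However, the step you yourself single out as ``the main obstacle'' --- producing the symmetric block $(e^{-\gamma t^\alpha|\xi-Nx|^2})^w e^{-tp_t^w}(e^{-\gamma t^\alpha|\xi-Nx|^2})^w$ with $p_t\ge 0$ and $\gamma$ independent of $t$ --- is precisely the heart of the theorem, and your proposed route for it (a Bruhat/block\--$LU$ factorization in $\mathrm{Sp}(2n,\mathbb R)$) is not the mechanism that makes it work and would not by itself deliver the two crucial outputs: the \emph{nonnegativity} of the middle quadratic form $p_t$ and the \emph{uniform} constant $\gamma$. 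The paper obtains these from a quantitative comparison of quadratic forms, not from linear algebra of the flow: the lower bound $ta_t(X)\gtrsim t^{2k_0+1}|\Pi_{S^\perp}X|^2$ of \cite[Thm 2.2]{AB21}, combined with $|\xi-Nx|^2\lesssim|\Pi_{S^\perp}(x,\xi)|^2$ (this is exactly where the graph hypothesis $S\subset\{(x,Nx)\}$ enters), gives $10\gamma t^\alpha\,\mathfrak n\le ta_t$; a dedicated quantitative Strang\--splitting lemma (Proposition~\ref{prop:Strang}, proved by a careful power\--series estimate on $\log(e^{2iJB}e^{-2iJA}e^{2iJB})$) then shows that under the threshold $0\le 5b\le a$ the middle form satisfies $p(a,b)\ge a/2\ge 0$. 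Without an argument of this type, nothing in your outline rules out that the ``remainder'' form in your factorization has a negative part, in which case Proposition~\ref{prop:split} is simply not applicable to it.

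A second, smaller but genuine gap: you propose to apply the exact classical\--quantum correspondence directly to the factors $(e^{-\gamma t^\alpha|\xi-Nx|^2})^w$, but Proposition~\ref{prop:split} concerns products of \emph{evolution operators} $e^{-tp^w}$, whereas $(e^{-s\mathfrak n})^w$ is a pseudo\--differential operator with Gaussian symbol and is \emph{not} equal to $e^{-s\mathfrak n^w}$. One must first invert the Mehler formula: find a nonnegative quadratic form $r_s$ (here $r_s=(sJ)^{-1}\arctan(sJ\mathfrak N)$, checked to satisfy $\mathfrak n\le r_s\le 2\mathfrak n$ for small $s$) such that $e^{-sr_s^w}$ equals $(e^{-s|\xi-Nx|^2})^w$ up to the constant $\sqrt{\det\cos(sJR_s)}^{-1}$; only then can the twisted Gaussian be treated as one of the semigroup factors in the splitting. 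Your closing remark about extending the identity from small $t>0$ to all of $[0,t_0)$ by continuity is also unnecessary once the matrix identity is established on the whole interval, but that is harmless. The substantive missing content is the pair (Mehler inversion, quantitative Strang splitting driven by the $t^{2k_0}$ lower bound on $a_t$).
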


Before proving this theorem, we have to introduce some useful tools. First, as in \cite{Vio17,AB21,Ber21}, in order to decompose a semigroup as a product of semigroups we will use the following proposition whose proof relies on properties of Fourier Integral Operators\footnote{More precisely Theorem 5.12 and Proposition 5.9 in \cite{Hor95}.} established by H\"ormander in \cite{Hor95}.
\begin{proposition}[see e.g. Prop 4 in \cite{Ber21}] \label{prop:split} Let $m\geq 1$, $T>0$ and $p_{1,t},\ldots, p_{m+1,t}$ be some complex quadratic forms on $\mathbb{R}^{2n}$ depending continuously on $t \in [0,T)$. If their real part is nonnegative, i.e. $\Reelle p_{k,t} \geq 0$ for $1\leq k\leq m+1$ and $t \in [0,T)$, and if the following decomposition holds
$$
\forall t \in [0,T), \quad e^{-2itJ P_{1,t}}\cdots e^{-2itJ P_{m,t}} = e^{-2itJ P_{m+1,t}},
$$
where $P_{k,t}$ denotes the matrix of $p_{k,t}$ for $1\leq k\leq m+1$, then
$$
\forall t \in [0,T), \quad e^{-t p_{1,t}^w }\cdots e^{-t  p_{m,t}^w} = e^{- tp_{m+1,t}^w}.
$$
\end{proposition}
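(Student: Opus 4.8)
The plan is to deduce the statement from H\"ormander's analysis of the oscillator semigroup and of Fourier integral operators with complex quadratic phases \cite{Hor95}, along the lines of \cite[Prop.~4]{Ber21}. The starting point is that, for a complex quadratic form $p$ with $\Reelle p\geq0$, the bounded operator $e^{-p^w}$ on $L^2(\mathbb R^n)$ belongs to H\"ormander's distinguished class of Fourier integral operators attached to complex linear canonical transformations, and the transformation attached to it is exactly $\exp(-2iJP)$, where $P$ is the matrix of $p$: this is (the Mehler-formula part of) \cite[Thm.~5.12]{Hor95}, and the assumption $\Reelle p\geq0$ is precisely the positivity condition guaranteeing membership in this class, the attached positive Lagrangian being the (twisted) graph of $\exp(-2iJP)$. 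Applying this with $p$ replaced by $tp_{k,t}$, whose matrix is $tP_{k,t}$, one gets that for every $t\in[0,T)$ and every $1\leq k\leq m+1$ the operator $e^{-tp_{k,t}^w}$ is a Fourier integral operator of this class, with canonical transformation $\exp(-2itJP_{k,t})$.

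Next I would invoke the composition rule \cite[Prop.~5.9]{Hor95}: the composition of two Fourier integral operators of this class is again one of this class, with canonical transformation the composition (product) of the two canonical transformations and symbol the composition of the two symbols, \emph{provided} the composition of the underlying positive Lagrangians is again a positive Lagrangian. In our purely linear setting this composition is always well defined -- it is just the product of complex symplectic matrices -- and, crucially, positivity is stable under such compositions: this is where the hypotheses $\Reelle p_{k,t}\geq0$ are used, and it is the only place they are needed. By a straightforward induction on $m$ it follows that $e^{-tp_{1,t}^w}\cdots e^{-tp_{m,t}^w}$ is a Fourier integral operator of H\"ormander's class whose canonical transformation is
\[
\exp(-2itJP_{1,t})\cdots\exp(-2itJP_{m,t}),
\]
which, by the standing hypothesis, coincides with $\exp(-2itJP_{m+1,t})$, i.e. with the canonical transformation of $e^{-tp_{m+1,t}^w}$.

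At this point both $e^{-tp_{1,t}^w}\cdots e^{-tp_{m,t}^w}$ and $e^{-tp_{m+1,t}^w}$ are Fourier integral operators of H\"ormander's class sharing the same canonical transformation, so they agree up to a multiplicative constant $\lambda(t)\in\mathbb C^{\ast}$, and it remains to check $\lambda(t)=1$. The cleanest route is to note that the composition formula of \cite[Prop.~5.9]{Hor95} tracks not only the canonical transformation but also the symbol (here: the Mehler prefactor), so that the symbol of the product equals exactly the symbol of $e^{-tp_{m+1,t}^w}$, leaving no room for a constant. Equivalently, and more concretely, one can apply both operators to a fixed Gaussian and match the (explicitly computable) amplitudes of the resulting Gaussians; or argue by continuity, using that $t\mapsto e^{-tp_{k,t}^w}$ is strongly continuous on $[0,T)$ with value $\mathrm{Id}$ at $t=0$, so that $\lambda$ is continuous with $\lambda(0)=1$, and that along the connected family $\{e^{-q^w}:\Reelle q\geq0\}$ through the identity the assignment of the canonical transformation is faithful, forcing $\lambda\equiv1$.

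The main obstacle is not the induction, which is routine, but the two facts imported from \cite{Hor95}: that the positive Lagrangians attached to the $p_{k,t}$ compose to positive Lagrangians (so that \cite[Prop.~5.9]{Hor95} applies -- this is the structural role of $\Reelle p_{k,t}\geq0$), and that the composition of symbols is \emph{exact} in this quadratic category, with no lower-order remainder, so that the Mehler prefactors multiply correctly and no spurious constant appears. Granting these properties -- which are precisely what H\"ormander establishes -- the proposition follows.
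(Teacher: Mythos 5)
Your proposal is correct and follows exactly the route the paper itself indicates: the paper does not reprove this proposition but cites \cite[Prop.~4]{Ber21}, with a footnote attributing the proof to Theorem 5.12 and Proposition 5.9 of \cite{Hor95} --- precisely the two facts (membership of $e^{-tp^w}$ in H\"ormander's class with canonical transformation $e^{-2itJP}$, and exact composition within that class) that you import. Your handling of the residual multiplicative constant via exact symbol composition together with continuity from $t=0$ is also the standard resolution used in the cited reference.
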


Thanks to the correspondence given by this proposition, it is sufficient to prove splitting formulas (i.e. decompositions) for exponential of matrices. 
%
As a corollary, we prove the following decomposition which is very useful to study the regularizing effects of semigroups generated by selfadjoint operators. 
\begin{proposition} \label{prop:Strang} Let $n\geq 1$ and $\mathscr{Q}_{2n}(\mathbb{R})$ be the space of real-valued quadratic forms on $\mathbb{R}^{2n}$.

There exists an analytic map $p : \mathscr{O} \to \mathscr{Q}_{2n}(\mathbb{R})$ defined on an open neighborhood of the origin $\mathscr{O}$ in $(\mathscr{Q}_{2n}(\mathbb{R}))^2$, vanishing at the origin and such that for all $ (a,b) \in \mathscr{O}$, we have
$$
0 \leq   5 b \leq    a  \quad \Longrightarrow \quad e^{- b^w } e^{- (p(a,b))^w }  e^{- b^w }  = e^{- a^w }  \quad\mathrm{and} \quad  p(a,b) \geq \frac{a}2.
$$
\end{proposition}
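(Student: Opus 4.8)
The plan is to reduce Proposition \ref{prop:Strang} to a statement about exponentials of matrices via Proposition \ref{prop:split}, and then to solve that matrix problem by the implicit function theorem. Concretely, I would look for an analytic map $(a,b)\mapsto p(a,b)$ such that the three-factor identity $e^{-b^w}e^{-(p(a,b))^w}e^{-b^w}=e^{-a^w}$ holds. By Proposition \ref{prop:split}, applied with $m=3$, $p_{1,t}=p_{3,t}=b$, $p_{2,t}=p(a,b)$ (all independent of $t$, and with nonnegative real parts since $a,b$ are real-valued and nonnegative once $0\le 5b\le a$), it suffices to find $p(a,b)$ such that the Hamilton-map identity
\begin{equation}
\label{eq:Strangmat}
e^{-2iJB}\,e^{-2iJ P(a,b)}\,e^{-2iJB}=e^{-2iJA}
\end{equation}
holds, where $A,B,P(a,b)$ are the symmetric matrices of $a,b,p(a,b)$. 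Writing $\mathsf a=-2iJA$, $\mathsf b=-2iJB$ (these are the Hamilton maps, elements of the symplectic Lie algebra $\mathfrak{sp}_{2n}$), \eqref{eq:Strangmat} reads $e^{\mathsf b}e^{\mathsf p}e^{\mathsf b}=e^{\mathsf a}$, i.e. $e^{\mathsf p}=e^{-\mathsf b}e^{\mathsf a}e^{-\mathsf b}$, so one simply sets $\mathsf p=\log(e^{-\mathsf b}e^{\mathsf a}e^{-\mathsf b})$ using the principal branch of the matrix logarithm, valid in a neighborhood of the identity, i.e. for $a,b$ small. The subtlety is that one must check that $\mathsf p$ so defined is again of the form $-2iJP$ with $P$ a \emph{real symmetric} matrix; this is exactly the statement that $\mathsf p\in\mathfrak{sp}_{2n}(\mathbb R)$, which holds because $\mathfrak{sp}_{2n}(\mathbb R)$ is a Lie algebra closed under the relevant operations and the logarithm of an element of the corresponding group $\mathrm{Sp}_{2n}(\mathbb R)$ near the identity stays in the algebra. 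Equivalently, one can argue at the operator level: $e^{-b^w}$ is invertible with inverse $e^{b^w}=(e^{-b^w})^{-1}$ for $b$ small (its Hamilton flow is close to the identity, no spectral obstruction), $e^{b^w}e^{-a^w}e^{b^w}$ is the semigroup at time $1$ of some quadratic operator by the group structure, hence equals $e^{-c^w}$ for a complex quadratic form $c$, and one sets $p(a,b)=c$; that $c$ is real-valued follows from self-adjointness, which is preserved under conjugation by self-adjoint (here, in fact, by $e^{\pm b^w}$, self-adjoint) operators and composition.

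\textbf{Analyticity and vanishing at the origin.} The map $(a,b)\mapsto p(a,b)$ defined by $P(a,b)=\tfrac i2 J^{-1}\log\!\big(e^{-2iJB}e^{-2iJA}e^{-2iJB}\big)$ is a composition of the matrix exponential, a fixed linear map, and the matrix logarithm, all of which are real-analytic on a neighborhood of the relevant points; since at $(a,b)=(0,0)$ the argument of the logarithm is the identity matrix, the formula is well-defined and analytic on some open neighborhood $\mathscr O$ of the origin in $(\mathscr Q_{2n}(\mathbb R))^2$, and clearly $p(0,0)=0$. So the only real work is the quantitative lower bound $p(a,b)\ge a/2$ under the hypothesis $0\le 5b\le a$.

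\textbf{The quantitative bound $p(a,b)\ge a/2$ — the main obstacle.} This is where I expect the difficulty to lie. Heuristically, by the Baker--Campbell--Hausdorff (or rather the symmetric/Strang-splitting Zassenhaus-type) expansion, $p(a,b)=a-2b+O(\|a\|^2+\|b\|^2)$ at the level of symbols — more precisely, the symmetric product $e^{-b}e^{-p}e^{-b}=e^{-a}$ forces $p=a-2b+$ (commutator terms), the leading correction being of order $\tfrac{1}{12}[\,\cdot\,,[\,\cdot\,,\cdot]]$-type with no first-order-in-the-bracket term (a feature of the symmetric splitting). Since $0\le 5b\le a$ gives $a-2b\ge \tfrac35 a\ge \tfrac a2$ with a definite gap of $\tfrac{1}{10}a$ to spare, I would argue: shrink $\mathscr O$ so that the error term in $p(a,b)=a-2b+E(a,b)$ satisfies $|E(a,b)|\le \tfrac{1}{10}a$ as quadratic forms (i.e. $-\tfrac{1}{10}a\le E(a,b)\le \tfrac{1}{10}a$), which is possible because $E$ vanishes to second order at the origin while $a$ is the dominant first-order term — precisely, one needs a bound of the form $\|E(a,b)\|\lesssim (\|a\|+\|b\|)\|a\|\le C\|a\|^2$ (using $b\le a/5$ so $\|b\|\lesssim\|a\|$) together with control of $E$ by $a$ in the sense of quadratic forms, not merely in norm. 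The form-control step is the delicate point: one must exploit that $E(a,b)$, as a quadratic form, is supported where $a$ is nondegenerate — concretely, writing things in a basis adapted to $\ker a$, the BCH commutator terms involve brackets that die on $\ker a$ (since $b\le a/5$ forces $\ker a\subset\ker b$, so both $a$ and $b$, hence their Hamilton maps modulo the symplectic structure, degenerate along $\ker a$), giving $E(a,b)\le C\|a\| \cdot a$ with $C\|a\|<\tfrac1{10}$ after shrinking $\mathscr O$. Combining, $p(a,b)=a-2b+E(a,b)\ge \tfrac35 a-\tfrac1{10}a=\tfrac12 a$, which is the claim. I would present the BCH estimate abstractly (bounding the tail of the series for the symmetric product) rather than computing brackets explicitly, and isolate the form-domination lemma "$\ker a\subset\ker b$ and both small $\Rightarrow$ all second-order BCH corrections are $O(\|a\|)\,a$" as the one genuinely new ingredient.
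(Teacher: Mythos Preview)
Your overall strategy coincides with the paper's: define $P(A,B)=(-2iJ)^{-1}\log\big(e^{2iJB}e^{-2iJA}e^{2iJB}\big)$, check it is real symmetric, expand via BCH to get $P=A-2B+E$, and show the remainder is dominated by $a$ as a quadratic form. However, two steps are not justified.

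\emph{Real-symmetry of $P$.} Your Lie-group argument does not apply as written: the matrices $e^{\pm 2iJB}$, $e^{-2iJA}$ are genuinely complex (because of the factor $i$), so the product lies in $\mathrm{Sp}_{2n}(\mathbb C)$, not $\mathrm{Sp}_{2n}(\mathbb R)$, and its logarithm is only a priori in $\mathfrak{sp}_{2n}(\mathbb C)$. The operator-level variant is also problematic since $e^{b^w}$ is unbounded on $L^2$. The paper obtains realness by observing that $P$ is an \emph{odd} function of $(A,B)$ (a property of the symmetric/Strang product), so only odd-degree terms survive in the power-series expansion, and these have real coefficients; symmetry then follows from a transpose computation on each monomial. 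Equivalently you could note that $\overline M=M^{-1}$ for $M=e^{2iJB}e^{-2iJA}e^{2iJB}$, whence $\log M$ is purely imaginary.

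\emph{The form-domination bound $|E(a,b)|\le C\|a\|\,a$.} Your heuristic ``the BCH terms die on $\ker a$'' is strictly weaker than what is needed and is insufficient. Vanishing on $\ker a$ only yields $E|_{\ker a}=0$; it does \emph{not} give $|E|\lesssim a$ uniformly in the shape of $a$ (for a counterexample at the level of quadratic forms: $a(x,\xi)=x^2$ and $E(x,\xi)=x\xi$ both vanish on $\{x=0\}$, yet $E/a=\xi/x$ is unbounded). The constant in any bound $a\gtrsim|\Pi_{(\ker a)^\perp}\cdot|^2$ depends on the smallest nonzero eigenvalue of $A$, which is not controlled by $\|a\|$. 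The paper instead exploits the \emph{product structure} of every term in the expansion: each monomial $M_{\alpha,\beta,\gamma}=J^{-1}(JB)^{\alpha_1}(JA)^{\beta_1}\cdots$ can be written so that $X^{\mathrm T}M_{\alpha,\beta,\gamma}X=(LX)\cdot C(RX)$ with $L,R\in\{A,B\}$, and then Cauchy--Schwarz together with $|LX|\le\sqrt{|L|}\,|\sqrt{L}X|$ and $\ell\le a$ for $\ell\in\{a,b\}$ gives directly
\[
|X^{\mathrm T}M_{\alpha,\beta,\gamma}X|\le |C|\sqrt{|L|\,|R|}\,\sqrt{\ell_L(X)}\sqrt{\ell_R(X)}\le |A|^{|(\alpha,\beta,\gamma)|_1-1}\,a(X).
\]
Summing over all monomials yields $|E(a,b)(X)|\le \varphi(\|A\|)\,a(X)$ with $\varphi(0)=0$, which is the missing quantitative ingredient. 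You correctly flagged this as ``the one genuinely new ingredient'', but the mechanism you propose does not deliver it; the Cauchy--Schwarz extraction of a factor $\sqrt{a(X)}$ from \emph{each end} of every BCH word is what makes the argument go through.
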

\begin{remark} Roughly speaking, this proposition means that, for quadratic operators, regularizing effects can be added without loss. For example, it implies that the operator $\mathrm{exp}( \partial_x^2 - x^2)$ makes functions as smooth as $\mathrm{exp}( \partial_x^2)$ does. Unfortunately, this property is not true in general: there may be some destructive interactions. For example, being given $m\geq 1$, the operator $\mathrm{exp}( -(-\partial_x^2)^{m} - x^2)$ is only as smoothing as $\mathrm{exp}( -(-\partial_x^2)^{2m/(1+m)})$ (see \cite{Alp20b}).
\end{remark}
\begin{proof}[Proof of Proposition \ref{prop:Strang}] Let $\mathscr{B}_{\varepsilon_1}$ be the centered open ball of radius $\varepsilon_1 := \frac16 \log 2$ in $S_{2n}(\mathbb{R})$ (the space of $n\times n$ symmetric matrices). First, we note that if $A,B \in \mathscr{B}_{\varepsilon_1}$, then
$$
|e^{ 2i J B}e^{- 2i J A}e^{ 2i J B} - I_{2n}| \leq e^{ 4|B| + 2 |A|} - 1 < 1.
$$
Therefore, the following map is well defined on $\mathscr{B}_{\varepsilon_1}^2$
\begin{equation}
\label{eq:defpab}
P(A,B) := (-2 i J)^{-1}\log( e^{ 2i J B}e^{- 2i J A}e^{ 2i J B} ),
\end{equation}
where $\log$ is defined by its power series. First, we note that $P$ is an odd function\footnote{this a well known and useful property in Geometric Numerical Integration which is related to the fact that Strang splittings preserve the reversibility, see \cite{HLW06}.}: 
$$
P(-A,-B)= (-2 i J)^{-1}\log\Big( \big[ e^{ 2i J B}e^{- 2i J A}e^{ 2i J B} \big]^{-1}  \Big) = - P(A,B).
$$
Then, we expand $P(A,B)$ in power series
\begin{equation*}
\begin{split}
P(A,B) &= (-2 i J)^{-1} \sum_{k\geq 1} \frac{(-1)^{k-1}}{k} \big( e^{ 2i J B}e^{- 2i J A}e^{ 2i J B} - I_{2n}\big)^k \\
					&= (-2 i J)^{-1} \sum_{k\geq 1} \frac{(-1)^{k-1}}{k} \bigg(  \sum_{  \alpha+\beta+\gamma >0  } \frac{ (2i J B)^{\alpha}  (-2i J A)^{\beta} (2i J B)^{\gamma}}{\alpha !\beta !\gamma ! } \bigg)^k \\
					&= A - 2B+  \sum_{k\geq 1}     \sum_{ \substack{ (\alpha,\beta,\gamma) \in (\mathbb{N}^3\setminus \{0\})^k \\ |(\alpha,\beta,\gamma) |_1 \geq 2 } }    c^{(k)}_{\alpha,\beta,\gamma} M_{\alpha,\beta,\gamma} (A,B),
		\end{split}
\end{equation*}
where 
$$
M_{\alpha,\beta,\gamma} (A,B)  = J^{-1} (JB)^{\alpha_1} (JA)^{\beta_1} (JB)^{\gamma_1}  \cdots  (JB)^{\alpha_k} (JA)^{\beta_k} (JB)^{\gamma_k},
$$
$$
c^{(k)}_{\alpha,\beta,\gamma} =  \frac{(-1)^{k}}{k} \frac{ (2i)^{-1+|(\alpha,\beta,\gamma) |_1 } (-1)^{-1+|\beta |_1  }  }{\alpha_1 ! \beta_1 ! \gamma_1 ! \cdots \alpha_k ! \beta_k ! \gamma_k ! }  \quad \mathrm{and} \quad  |(\alpha,\beta,\gamma) |_1 = \sum_{j=1}^k  \alpha_j +\beta_j + \gamma_j.
$$
But, since $P$ is odd, this expansion rewrites
\begin{equation}
\label{eq:expP}
P(A,B)  =  A - 2B+   \sum_{k\geq 1}     \sum_{ \substack{ (\alpha,\beta,\gamma) \in (\mathbb{N}^3\setminus \{0\})^k \\ |(\alpha,\beta,\gamma) |_1 \geq 2 } }     \kappa^{(k)}_{\alpha,\beta,\gamma} M_{\alpha,\beta,\gamma} (A,B),
\end{equation}
where 
$$
\kappa^{(k)}_{\alpha,\beta,\gamma} = c^{(k)}_{\alpha,\beta,\gamma} \quad \mathrm{if} \quad |(\alpha,\beta,\gamma) |_1 \quad \mathrm{is \ odd} \quad \mathrm{and} \quad \kappa^{(k)}_{\alpha,\beta,\gamma} = 0 \quad \mathrm{else}.
$$
Since $\kappa^{(k)}_{\alpha,\beta,\gamma}$ is a real number by definition, we deduce that $P(A,B)$ is a real matrix. Then, we note that
$$
\transp{M_{\alpha,\beta,\gamma} (A,B) } = -(-1)^{|(\alpha,\beta,\gamma) |_1}  M_{\overleftarrow{\gamma},\overleftarrow{\beta}, \overleftarrow{\alpha}} (A,B),
$$
where the transformation $\overleftarrow{\cdot}$ inverses the order of the indices, i.e. if $v\in \mathbb{R}^{k}$ then $(\overleftarrow{v})_j = v_{k-j+1}$. But since $\kappa^{(k)}_{\alpha,\beta,\gamma}$ vanishes if $|(\alpha,\beta,\gamma) |_1$ is even and $\kappa^{(k)}_{\alpha,\beta,\gamma} = \kappa^{(k)}_{\overleftarrow{\gamma},\overleftarrow{\beta}, \overleftarrow{\alpha}}$, we deduce from the expansion \eqref{eq:expP} of $P$ that  $P(A,B)$ is a real symmetric matrix. 

\medskip

We define naturally $p(a,b)$ by conjugation: being given two real-valued quadratic forms $a,b$ such that $\| a \|_{L^\infty(\mathbb{S}^{2n-1})}< \varepsilon_1$ and $\| b \|_{L^\infty(\mathbb{S}^{2n-1})}< \varepsilon_1$, we define $p(a,b)$ as the quadratic form associated with  $P(A,B)$, where $A$ and $B$ are the matrices of $a$ and $b$. From now, we consider two quadratic forms $a,b$ such that 
$$
0 \leq   5 b \leq    a < \varepsilon_2 |\cdot|^2,
$$
where $\varepsilon_2\leq \varepsilon_1$ is a universal constant that will be determined later. We aim at proving that $p(a,b) \geq a /2$. Being given $X\in \mathbb{R}^{2n}$, using the expansion \eqref{eq:expP}, we get
\begin{equation}
\label{eq:tempopo}
p(a,b)(X) \geq \Big(1-\frac25\Big)a(X) -  \sum_{k\geq 1}     \sum_{ \substack{ (\alpha,\beta,\gamma) \in (\mathbb{N}^3\setminus \{0\})^k \\ |(\alpha,\beta,\gamma) |_1 \geq 2 } }      |\kappa^{(k)}_{\alpha,\beta,\gamma}|  |\transp{X}M_{\alpha,\beta,\gamma} (A,B) X|.
\end{equation}
Hence we have to estimate $|\transp{X}M_{\alpha,\beta,\gamma} (A,B) X|$. This term is of the form $(L X) \cdot C (R X)$ where $L,R \in \{A,B \} $ and $C$ is a matrix (a product of matrices in $ \{A,B,J \}$). So, by Cauchy-Schwarz' inequality, we have
$$
|\transp{X}M_{\alpha,\beta,\gamma} (A,B) X| \leq |C|  \sqrt{|L| |R|} |\sqrt{L} X| |\sqrt{R}X|. 
$$
Recalling that by assumption $5 b \leq    a< \varepsilon_2 |\cdot|^2$, we deduce that
$$
|\transp{X}M_{\alpha,\beta,\gamma} (A,B) X| \leq |A|^{-1+|(\alpha,\beta,\gamma) |_1 } a(X) \leq \varepsilon_2^{-1+|(\alpha,\beta,\gamma) |_1 } a(X).
$$
Plugging this estimate in \eqref{eq:tempopo}, we get
\begin{equation*}
\begin{split}
p(a,b)(X) &\geq a(X)  \bigg( \frac35 - \sum_{k\geq 1}     \sum_{ \substack{ (\alpha,\beta,\gamma) \in (\mathbb{N}^3\setminus \{0\})^k \\ |(\alpha,\beta,\gamma) |_1 \geq 2 } }      |\kappa^{(k)}_{\alpha,\beta,\gamma}|  \varepsilon_2^{-1+|(\alpha,\beta,\gamma) |_1}    \bigg) \\
&= a(X)  \bigg( \frac35 - \frac1{ (2\varepsilon_2)}\sum_{k\geq 1}   \frac1k  \sum_{ \substack{ (\alpha,\beta,\gamma) \in (\mathbb{N}^3\setminus \{0\})^k \\ |(\alpha,\beta,\gamma) |_1 \geq 2 } }       \frac1{ \alpha_1 ! \beta_1 ! \gamma_1 ! \cdots \alpha_k ! \beta_k ! \gamma_k ! }    (2\varepsilon_2) ^{|(\alpha,\beta,\gamma) |_1}    \bigg) \\
&=  a(X)  \bigg( \frac35 - \frac1{ (2\varepsilon_2)} (-\log(2- e^{2\varepsilon_2}e^{2\varepsilon_2} e^{2\varepsilon_2})-  3(2 \varepsilon_2)) \bigg).
\end{split}
\end{equation*}
Since $3/5>1/2$ and the map $x \mapsto -x^{-1} (\log(2-e^{x}) -x)$ is smooth and vanishes as $x$ goes to $0$, we deduce from the previous estimate that there exists $\varepsilon_2>0$ such that $p(a,b) \geq  a/2$. 

A fortiori, $p(a,b)$ is nonnegative. Consequently, by definition of $p(a,b)$ (see \eqref{eq:defpab}), using the exact classical-quantum correspondance (i.e. Proposition \ref{prop:split}), we get the expected splitting formula: 
$$e^{- b^w } e^{- (p(a,b))^w }  e^{- b^w }  = e^{- a^w }.$$
\end{proof}

Finally, the last tool we need is the Mehler formula which allows to express semigroups as pseudo-differential operators.
\begin{theorem}[Mehler formula, Thm 4.2 in \cite{Hor95}] \label{thm:Meh} Let $q$ be a complex-valued quadratic form on $\mathbb{R}^{2n}$ whose real part is nonnegative and let $Q$ be its matrix. Then, $e^{-q^w}$ is a pseudo-differential operator and whenever the condition $\mathrm{det} \cos( J Q) \neq 0$ is satisfied, the following formula holds
$$
e^{-q^w} = \frac1{\sqrt{\mathrm{det} \cos( J Q)}}( e^{-  m} )^w,
$$ 
where $m$ denotes the complex quadratic form associated with the matrix $J^{-1} \tan (JQ)$.
\end{theorem}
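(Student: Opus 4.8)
The strategy is to realize both sides of the claimed identity as the value at $t=1$ of the unique solution of the linear Cauchy problem $\dot W=-q^wW$, $W(0)=\mathrm{Id}$. Replacing $q$ by $tq$ (equivalently $Q$ by $tQ$), it suffices to treat $t$ in a neighbourhood of $0$, where $\det\cos(tJQ)\neq0$. There I set
$$ G_t:=J^{-1}\tan(tJQ),\qquad \delta_t:=\det\cos(tJQ),\qquad V(t):=\delta_t^{-1/2}(e^{-m_t})^w, $$
where $m_t$ is the quadratic form of matrix $G_t$ and $\delta_t^{-1/2}$ is the branch continuous from $\delta_0=1$. Since $Q\in S_{2n}(\mathbb R)$, the matrix $JQ$ is a Hamilton matrix, so $J^{-1}$ applied to an odd analytic function of $JQ$ is symmetric; hence $G_t\in S_{2n}(\mathbb C)$ and $m_t$ is a genuine complex quadratic form. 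Because $\Reelle q\geq0$ and $G_0=0$, a continuity argument yields $\Reelle m_t\geq0$ for $|t|$ small, so $(e^{-m_t})^w$ is a bounded operator on $L^2(\mathbb R^n)$ (Gaussian calculus), and $m_0=0$, $\delta_0=1$ give $V(0)=\mathrm{Id}$.

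The crux is the Weyl composition of the quadratic form $q$ with the Gaussian $e^{-m}$, where $m$ has matrix $M$ with $\Reelle m\geq0$. Because $q$ is a polynomial of degree $2$, the Moyal expansion of the symbol of $q^w(e^{-m})^w$ truncates after the term carrying two derivatives of $q$, and one obtains
$$ q^w(e^{-m})^w=\big((\mathcal Q(M)\,X\cdot X+c(M))\,e^{-m}\big)^w, $$
where $\mathcal Q(M)\in S_{2n}(\mathbb C)$ depends quadratically on $M$ and linearly on $Q$, and $c(M)\in\mathbb C$ is a scalar, both written explicitly as contractions of $Q$, $M$ and $J$ (the no-derivative term gives $qe^{-m}$; the one-derivative term produces a quadratic piece through $\partial e^{-m}=-2(MX)e^{-m}$; the two-derivative term contributes, through $\partial^2e^{-m}$, a quadratic part and the constant $c(M)$). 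Differentiating $V(t)$, writing $\partial_t(e^{-m_t})^w=(-(\dot m_t)e^{-m_t})^w$ with $\dot m_t$ of matrix $\dot G_t$, and invoking injectivity of the Weyl quantization, the equation $\dot V(t)=-q^wV(t)$ is seen to be \emph{equivalent} to the coupled system $\dot G_t=\mathcal Q(G_t)$ and $\tfrac12\,\dot\delta_t/\delta_t=c(G_t)$.

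It then remains to verify that $G_t=J^{-1}\tan(tJQ)$ and $\delta_t=\det\cos(tJQ)$ solve this system. From $\tfrac{d}{dt}\tan(tJQ)=JQ\,(I+\tan^2(tJQ))$ and $\tan(tJQ)=JG_t$ one gets $\dot G_t=Q+Q(JG_t)(JG_t)$, and a direct symplectic-algebra check identifies this right-hand side with $\mathcal Q(G_t)$ from the previous step; likewise $\tfrac{d}{dt}\log\det\cos(tJQ)=-\Tr(JQ\tan(tJQ))=-\Tr(JQ\,JG_t)$, which matches $2c(G_t)$. Since $G_0=0$, $\delta_0=1$, uniqueness shows that $V(t)$ solves $\dot V=-q^wV$, $V(0)=\mathrm{Id}$; as $q^w$ generates the contraction semigroup $(e^{-tq^w})_{t\geq0}$ recalled in the introduction and $V(t)$ maps finite linear combinations of Hermite functions (a core of $q^w$) into $D(q^w)$ with the equation valid there, we conclude $V(t)=e^{-tq^w}$ for $|t|$ small. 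Rescaling yields the formula whenever $\det\cos(sJQ)\neq0$ for all $s\in[0,t]$; the remaining cases follow by analytic continuation in $Q$ together with the semigroup property, and that $e^{-q^w}$ is in any event a pseudodifferential operator is part of Hörmander's theory of the operators attached to complex canonical transformations (alternatively, the whole identity can be derived from the metaplectic representation, the symbol of the operator covering a complex symplectic $\chi$ being a Gaussian with Cayley-transform matrix and normalization $\det(\chi+I)^{-1/2}$, applied to $\chi=e^{-2itJQ}$).

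The main obstacle is the explicit Moyal computation of $\mathcal Q(M)$ and $c(M)$ and the algebraic recognition that the resulting matrix Riccati equation $\dot G_t=Q+Q(JG_t)(JG_t)$ is exactly the one solved by $J^{-1}\tan(tJQ)$: this requires careful bookkeeping of the symplectic algebra. A secondary, more conceptual difficulty is controlling the possibly very degenerate operators $(e^{-m_t})^w$ (their boundedness, strong continuity and differentiability on a common core) and justifying the persistence of the formula across the caustic set $\{\det\cos(tJQ)=0\}$.
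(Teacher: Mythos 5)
The paper does not prove this statement: Theorem \ref{thm:Meh} is imported verbatim from H\"ormander \cite{Hor95} (Theorem 4.2 there), so there is no internal proof to compare against. Your strategy --- showing that $V(t)=\det\cos(tJQ)^{-1/2}(e^{-m_t})^w$ and $e^{-tq^w}$ solve the same abstract Cauchy problem, reducing the claim to the matrix Riccati equation $\dot G_t=Q+Q(JG_t)(JG_t)$ and the trace identity for $\log\det\cos(tJQ)$ --- is the standard direct verification of the Mehler formula and is sound in outline; it is genuinely different from H\"ormander's own route (metaplectic representation and FIOs associated with positive complex canonical transformations), which you correctly mention as an alternative. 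The truncation of the Moyal expansion for a degree-$2$ symbol, the identification $\frac{d}{dt}\tan(tF)=F(I+\tan^2(tF))$, and the uniqueness argument on a core of $q^w$ are all correct.

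There are, however, two points where what you wrote does not yet constitute a proof. First, the claim that ``$\Reelle q\geq0$ and $G_0=0$'' yield $\Reelle m_t\geq0$ for small $|t|$ ``by continuity'' is not a valid argument when $\Reelle Q$ is degenerate (which is exactly the case of interest in this paper): if $\Reelle G_t=t\,\Reelle Q+O(t^3)$ and $\Reelle Q$ has a kernel, the $O(t^3)$ correction could a priori be negative on that kernel, and continuity alone does not exclude this. The nonnegativity of $\Reelle m$ is true but nontrivial; it rests on the positivity of the canonical transformation $e^{-2itJQ}$ (H\"ormander's Proposition 5.8, or \cite[Lemma 3.1]{Alp20a} quoted in Section \ref{subsec:disc}), and you need it to know that $(e^{-m_t})^w$ maps $\mathscr S$ into $\mathscr S\subset D(q^w)$ so that your uniqueness argument applies. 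Second, the crux of the whole computation --- that the quadratic part $\mathcal Q(M)$ produced by the Moyal product $q\,\#\,e^{-m}$ equals (the symmetrization of) $Q+Q(JM)(JM)$ and that the scalar part equals $-\tfrac12\Tr(JQ\,JM)$ --- is asserted, not carried out; you yourself flag it as ``the main obstacle''. As it stands the argument is a correct and well-structured plan whose two load-bearing steps (positivity of $\Reelle m_t$ and the Riccati identification) remain to be supplied.
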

\begin{remark} A priori, the Mehler formula seems ambiguous due to sign indetermination of the square root. Fortunately it is not. Indeed, when $Q$ is small enough it is well defined through the standard holomorphic functional calculus (choosing the principal determination of the square root). Moreover, H\"ormander has proven in \cite{Hor95} (just before Theorem 4.1) that there is a natural way to extend $\sqrt{\mathrm{det} \cos( J Q)}$ as an entire function of $Q$.
\end{remark}

\begin{proof}[Proof of Theorem \ref{thm:the_dec}]
We divide the proof in four steps. First, we are going to explain why we can assume without loss of generality that $G$ is skew-symmetric. Then, we will recall some useful properties of the polar decomposition of $e^{-tq^w}$ established in \cite{AB21}. Finally, in the two last steps, we will decompose separately the unitary part and the symmetric part.

\medskip

\noindent \emph{$\triangleright$ Step 1: Reduction to the skew-symmetric case.} As a consequence of the metaplectic invariance of the Weyl calculus (see \cite[Theorem 18.5.9]{Hor85}), we know that
$$
e^{ -\frac{i}2 G x \cdot x}  e^{-tq^w}  e^{ \frac{i}2 G x \cdot x} = \exp ( -t e^{ -\frac{i}2 G x \cdot x}  q^w  e^{ \frac{i}2 G x \cdot x} )  = e^{-t(q\circ \mathcal{L})^w},
$$
where $\mathcal{L}$ denotes the \emph{Lie transform} (i.e. the Hamiltonian flow at time $1$) of the classical Hamiltonian $h(x,\xi):=-\frac12 G x \cdot x$, i.e. 
$$
\mathcal{L} = \Phi_1 \quad \mathrm{where} \quad \partial_t \Phi_t = J (\nabla h)\circ \Phi_t  \quad \mathrm{and} \quad \Phi_0 = \mathrm{I}_{2n}.
$$
Actually, here, $\mathcal{L}$ is nothing but a symplectic transvection\footnote{also called shear mapping.}: $\mathcal{L}(x,\xi) = (x,\xi + G^{(sym)} x)$
where $G^{(sym)} = (G+\transp{G})/2$ denotes the symmetric part of $G$. 

Let $\widetilde{S}$ be the singular space of $q\circ \mathcal{L}$. In order to reduce the problem to the case where $G$ is skew-symmetric, we just have to prove that $
\widetilde{S} = \mathcal{L}^{-1} S. $
Indeed, since by assumption $S$ is included in the graph of $G$, we would have
$$
\widetilde{S}=\mathcal{L}^{-1} S \subset \mathcal{L}^{-1} \big\{ (x,Gx) \ | \ x\in \mathbb{R}^n \big\} = \big\{ (x,Gx -  G^{(sym)} x ) \ | \ x\in \mathbb{R}^n \big\} =  \big\{ (x,Nx) \ | \ x\in \mathbb{R}^n \big\},
$$
and so the decomposition of $e^{-t(q\circ \mathcal{L})^w}$ would provide the one of $e^{-tq^w}$ (because we will also check that the global indexes of the spaces $S$ and $\widetilde{S}$ are the same).

Now let us prove that $\widetilde{S} = \mathcal{L}^{-1} S $. By construction, since $\mathcal{L}$ is symplectic, it satisfies the relation $\mathcal{L}^{-1} J = J \transp{\mathcal{L}}$. Therefore, for all $k\geq 0$, we have
$$
(\Imag (J \transp{\mathcal{L}} Q \mathcal{L}))^k = ( J \transp{\mathcal{L}} (\Imag Q) \mathcal{L})^k = (\mathcal{L}^{-1} J  (\Imag Q) \mathcal{L} )^k = \mathcal{L}^{-1} ( J  (\Imag Q) )^k \mathcal{L},
$$
and so, since $\transp{\mathcal{L}} Q \mathcal{L}$ is the matrix of $q \circ \mathcal{L}$, we have 
\begin{equation*}
\begin{split}
\widetilde{S} &= \bigcap_{k \in \mathbb{N}} \mathrm{Ker} \, [\Reelle \transp{\mathcal{L}}  Q   \mathcal{L} ] (\Imag (J \transp{\mathcal{L}} Q \mathcal{L}))^k  \\
&=\bigcap_{k \in \mathbb{N}} \mathrm{Ker} \, \transp{\mathcal{L}} (\Reelle Q) ( J  (\Imag Q) )^k \mathcal{L} \\ & = \bigcap_{k \in \mathbb{N}} \mathcal{L}^{-1}\mathrm{Ker} \transp{\mathcal{L}} (\Reelle Q) ( J  (\Imag Q) )^k \\
& = \bigcap_{k \in \mathbb{N}} \mathcal{L}^{-1}\mathrm{Ker} (\Reelle Q) ( J  (\Imag Q) )^k = \mathcal{L}^{-1}S.
\end{split}
\end{equation*} 
Moreover, we point out that, as a consequence of these identities, the global indices (i.e. $k_0$) of $S$ and $\widetilde{S}$ are equal. Thanks to this reduction, we will now only treat the skew-symmetric case (i.e. $G=N$ and $q\circ \mathcal{L}=q$). 

\medskip

\noindent \emph{$\triangleright$ Step 2: Some reminders about the polar decomposition of $e^{-tq^w}$.} The properties of the polar decomposition of $e^{-tq^w}$ have been studied in details by the authors in \cite{AB21}. More precisely, in \cite[Theorem 2.1]{AB21}, it is proven that there exist $T>0$ and two families of real quadratic forms $a_t,b_t$ on $\mathbb{R}^{2n}$, depending analytically on $t \in (-T,T)$, such that for all $t\in (0,T)$, $a_t$ is nonnegative and we have
\begin{equation}
\label{eq:polardec}
e^{-t q^w} = e^{- t a_t^w} e^{- i t b_t^w}.
\end{equation}
Since the quadratic forms are real-valued, $e^{- i t b_t^w}$ is unitary on $L^2$ while $e^{- t a_t^w}$ is self-adjoint.
Moreover, Theorem 2.2 of \cite{AB21} provides\footnote{see also \cite[Lemmata 7.13 and 4.1]{AB21} for further details.} the following quantitative estimate on $a_t$:
\begin{equation}
\label{eq:est_at}
\forall t\in (-T,T),\forall X \in \mathbb{R}^{2n}, \quad  a_t(X) \gtrsim t^{2k_0}|\Pi_{S^\perp} X|^2,
\end{equation}
where $\Pi_{S^\perp} $ denotes the orthogonal projection on $S^\perp$ (actually this estimate is sharp in the sense that it could be proven that $a_t$ vanishes on $S$). 

\medskip

\noindent \emph{$\triangleright$ Step 3: Decomposition of the unitary part.}  Now, we aim at proving that there exist $t_0>0$ and some matrices $D_t,M_t,W_t$ (as in Theorem \ref{thm:the_dec}) such that for all $t\in(-t_0,t_0)$ we have
\begin{equation}
\label{dec:btop}
e^{-itb_t^w} = e^{- \frac{t}2\mathrm{tr} M_t}e^{i t D_t \nabla \cdot \nabla} e^{tM_t x \cdot \nabla}  e^{\frac{i}2 tW_t  x \cdot x}.
\end{equation}
As a consequence of the classical-quantum exact correspondence (i.e. Proposition \ref{prop:split}), since $M_t x \cdot \nabla - \frac12 \mathrm{tr} M_t = -i (M_tx \cdot \xi)^w$, it is sufficient to prove a factorization of the form
\begin{equation}
\label{dec:Btmat}
e^{2 tJ B_t} = \exp \left(-2tJ\begin{pmatrix} 0 & 0 \\ 0 & D_t\end{pmatrix} \right) \exp \left(-tJ\begin{pmatrix} 0 & \transp{M_t} \\ M_t & 0\end{pmatrix} \right) \exp \left(tJ\begin{pmatrix} W_t & 0 \\ 0 & 0\end{pmatrix} \right),
\end{equation}
where $B_t$ is the matrix of $b_t$. 

Actually it is a quite direct application of the Local Inversion Theorem. Indeed, there exists a neighborhood of the origin $\mathcal{N}$ in $\mathfrak{sp}_{2n}(\mathbb{R}) := J S_{2n}(\mathbb{R})$ (where $S_{2n}(\mathbb{R})$ denotes the space of real symmetric matrices) such that the following map $\Psi$ is well defined on $\mathcal{N}$
$$
\Psi(J K) = \log\left(\exp \left(J\begin{pmatrix} 0 & 0 \\ 0 & K_{2,2}\end{pmatrix} \right) \exp \left(J\begin{pmatrix} 0 & K_{1,2} \\ K_{2,1} & 0\end{pmatrix} \right) \exp \left(J\begin{pmatrix} K_{1,1} & 0 \\ 0 & 0\end{pmatrix} \right)\right),
$$
where $\log$ is defined by its power series and $K = \begin{pmatrix} K_{1,1} & K_{1,2} \\ K_{2,1} & K_{2,2}\end{pmatrix}$ denotes the decomposition of $K$ by blocks of size $n\times n$ (note that since $K$ is symmetric, it satisfies $K_{1,2} = \transp{K_{2,1}}$). 
We note that, as a consequence of the Baker-Campbell-Hausdorff formula (see e.g. Theorem 10 in \cite{BCOR09}), since $\mathfrak{sp}_{2n}(\mathbb{R})$ is a Lie algebra, $\Psi$ maps $\mathcal{N}$ into $\mathfrak{sp}_{2n}(\mathbb{R})$. Moreover, since both the differential of the exponential at the origin and of the logarithm at the identity are equal to the identity, we deduce that the differential of $\Psi$ at the origin is equal to the identity (and so that it is invertible). Finally, since $t\mapsto B_t$ is a smooth map, the existence of matrices $D_t,W_t,M_t$, depending smoothly on $t$ and satisfying, provided that $|t|$ is small enough, the decomposition \eqref{dec:Btmat} is just a consequence of the Local Inversion Theorem applied to $\Psi$ at the origin.

\medskip

\noindent \emph{$\triangleright$ Step 4:  Decomposition of the symmetric part.} Now, we focus on the decomposition of $e^{-ta_t^w}$. We aim at proving that there exist $\gamma,t_0>0$,  $p_t$ (as in Theorem \ref{thm:the_dec}) and some constants $\mathfrak{c}_t$ depending continuously on $t$ such that for all $t\in[0,t_0)$,
\begin{equation}
\label{eq:decsympart}
e^{-ta_t^w} = \mathfrak{c}_t \, (e^{-\gamma t^\alpha |\xi -  N x|^2 })^w e^{-t p_t^w} (e^{-\gamma t^{\alpha} |\xi - Nx|^2 })^w.
\end{equation}

\medskip

\noindent \emph{$\triangleright$ Substep 4.1: Inversion of the Mehler formula.}
First, in order to get such a decomposition using the classical-quantum exact correspondence (i.e. Proposition \ref{prop:split}), we aim at rewriting the twisted diffusion $(e^{-\gamma t^\alpha |\xi -  N x|^2 })^w$ as an evolution operator thanks to the Mehler formula (i.e. Theorem \ref{thm:Meh}). 

Using the holomorphic functional calculus, provided that $|s|$ is small enough to avoid singularities, we define the matrices $R_{s}$, with $s\in \mathbb{R}$, by
$$
R_{s} := (sJ)^{-1} \arctan\left( s J \mathfrak{N} \right) \quad \mathrm{where} \quad \mathfrak{N}:= \begin{pmatrix} N^2 & N \\ -N & I_n\end{pmatrix}.
$$
First, we notice that since $\arctan$ vanishes at the origin, $R_s$ depends smoothly (analytically) on $s$. Then, since $N$ is skew-symmetric, we deduce that $\mathfrak{N}$ is symmetric. Therefore, expanding $\arctan$ in power series (provided that $s$ is small enough), we get 
\begin{equation}
\label{eq:trick_arctan}
R_s = (sJ)^{-1}\sum_{k=0}^{+\infty} (-1)^{k} \frac{(sJ\mathfrak{N})^{2k+1}}{2k+1} = \sum_{k=0}^{+\infty} \frac{\transp{\mathfrak{F}_s^k  } \, \mathfrak{N} \, \mathfrak{F}_s^k }{2k+1} \quad \mathrm{with} \quad \mathfrak{F}_s := sJ\mathfrak{N}.
\end{equation}
It appears from this formula that $R_s$ is a real symmetric matrix. We denote by $r_s$ (resp. $\mathfrak{n}$) the quadratic form of which $R_s$ (resp. $\mathfrak{N}$) is the matrix. Since $\mathfrak{n} (x,\xi) = |\xi - Nx|^2$ is nonnegative, we deduce from \eqref{eq:trick_arctan} that
$$
r_s = \mathfrak{n} + \sum_{k \geq 1} \frac{\mathfrak{n} \circ \mathfrak{F}_s^k}{2k+1} \geq  \mathfrak{n}.
$$
Therefore, $r_s$ is nonnegative. Hence, as a consequence of the Mehler formula (see Theorem \ref{thm:Meh}), provided that $s\geq 0$ is small enough, we can write the twisted diffusion as an evolution operator
\begin{equation}
\label{eq:Mehlerinverse}
e^{- s r_s^w} =  \frac1{\sqrt{\mathrm{det} \cos( sJ R_s)}} (e^{-s |\xi -  N x|^2 })^w.
\end{equation}
Finally, we aim at establishing an upper bound for $r_s$. First, we note that
$$
\forall X \in \mathbb{R}^{2n}, \quad |\mathfrak{F}_s X| \leq |s| |\sqrt{\mathfrak{N}}| |\sqrt{\mathfrak{N}} X| =  |s| |\sqrt{\mathfrak{N}}| \sqrt{\mathfrak{n}( X)}.
$$
Therefore, as a consequence of \eqref{eq:trick_arctan}, we have that for all $X \in \mathbb{R}^{2n}$ and $|s| < 2^{-1/2} |\mathfrak{N}|^{-1}$,
\begin{equation}
\label{eq:estrs}
r_s(X) \leq \mathfrak{n}(X) \sum_{k=0}^\infty \frac{ |s|^{2k}  |\mathfrak{N}|^{2k }  }{2k+1} \leq \frac{\mathfrak{n}(X)}{1 - |s \mathfrak{N}|^2} \leq 2 \mathfrak{n}(X).
\end{equation}

\medskip

\noindent \emph{$\triangleright$ Substep 4.2: Strang splitting.} First, we note that since $S$ is included in the graph of $N$, we have the estimate
$\mathfrak{n} \lesssim |\Pi_{S^\perp} \cdot|^2.$
Moreover, as explained in \eqref{eq:est_at}, we know that $ t^{\alpha } |\Pi_{S^\perp} \cdot|^2 \lesssim t a_t$. Consequently, there exists $\gamma>0$ such that for all $t\in [0,T)$, we have
$ 10 \gamma   t^{\alpha } \mathfrak{n}  \leq t a_t$. Moreover, we know from \eqref{eq:estrs} that if $t < (\gamma^{-1} 2^{-1/2} |\mathfrak{N}|^{-1})^{1/\alpha} $, then $r_{\gamma t^{\alpha}} \leq 2 \mathfrak{n}$. Therefore, we have
\begin{equation}
\label{eq:carteRU}
0\leq t < t_1 \quad \Longrightarrow \quad 5 (\gamma t^{\alpha}r_{\gamma t^{\alpha}}) \leq t a_t,
\end{equation}
where $t_1:= \min (T,(\gamma^{-1} 2^{-1/2} |\mathfrak{N}|^{-1})^{1/\alpha})$. Now, we apply Proposition \ref{prop:Strang} in order to decompose $e^{-ta_t^w}$: we get the map $p$ defined on a neighborhood $\mathscr{O}$ of the origin in the space of couples of real-valued quadratic forms on $\mathbb{R}^{2n}$. First, we note that since $t \mapsto a_t$ is smooth, $t a_t$ vanishes as $t$ goes to $0$. Therefore, there exists $t_0 \in (0,t_1)$ such that the following map $p_t:= t^{-1}p(t a_t,\gamma t^{\alpha}r_{\gamma t^{\alpha}})$ is well defined for $t\in (-t_0,t_0)$ and is analytic. Moreover, thanks to the estimate  \eqref{eq:carteRU}, we have that for all $t\in [0,t_0)$,
$$
e^{-  \gamma t^{\alpha}r_{\gamma t^{\alpha}}^w} e^{-p_t^w} e^{-  \gamma t^{\alpha}r_{\gamma t^{\alpha}}^w} = e^{-ta_t^w} \quad \mathrm{and} \quad p_t \geq a_t/2 \geq 0.
$$
Finally, since we have shown in \eqref{eq:Mehlerinverse} that $e^{-  \gamma t^{\alpha}r_{\gamma t^{\alpha}}^w} $ is a pseudo-differential operator whose symbol is $ \sqrt{\mathrm{det} \cos(   \gamma t^{\alpha}J R_ {\gamma t^{\alpha}})}^{-1} e^{-  \gamma t^{\alpha} \mathfrak{n} }$, we have factorized $e^{-tq^w}$ as expected in \eqref{eq:decsympart}.

\end{proof}

\section{Integral transforms with Gaussian kernels}\label{sec:Gauss}

In this section, we aim at proving $L^{\mathfrak p}$ bounds for the evolution operators involved in the decomposition \eqref{eq:the_dec} given by Theorem \ref{thm:the_dec}. More precisely, the main results of this section are Theorem \ref{thm:fifou}, in which we prove that $e^{- tp_t^w}$ maps $L^{\mathfrak{q}}$ into itself, and Corollary \ref{cor:twisted_vs_schro}, where we prove that $(e^{-\gamma t^{\alpha} |\xi - Nx|^2 })^w e^{i t D_t \nabla \cdot \nabla}$ maps $L^{\mathfrak{p}}$ into $L^{\mathfrak{q}}$.

To prove these results, we shall write evolution operators as integral transforms with Gaussian kernels. Naturally, we will have to compute Fourier transforms of complex Gaussians, so we recall the following classical result.
\begin{proposition}[see e.g. Thm 1 in \cite{Fol89} page 256] \label{prop:TFGauss} Let $A$ be an $n\times n$ complex symmetric matrix whose real part is positive-definite. Then for any $z\in \mathbb{C}^n$, 
$$
\int_{\mathbb{R}^n} e^{- \frac12 Ax \cdot x } e^{i x \cdot z}\, \mathrm{d}x = \frac{(2\pi)^{n/2}}{\sqrt{\mathrm{det} A}} e^{- \frac12 A^{-1}z \cdot z  },
$$
where the branch of the square root is determined by the requirement that $\sqrt{\mathrm{det}\,  A}>0$ when $A$ is real and positive-definite.
\end{proposition}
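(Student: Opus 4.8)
The plan is to prove the formula first in the real positive-definite case, where it is an elementary computation, and then propagate it to all admissible $A$ and $z$ by two successive applications of the identity theorem.

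\emph{Step 1: the real case.} Suppose first that $A$ is real symmetric positive-definite and $z\in\mathbb R^n$. Diagonalize $A = \transp{O} D O$ with $O$ orthogonal and $D = \diag(\lambda_1,\dots,\lambda_n)$, $\lambda_j>0$; the orthogonal change of variables $x\mapsto \transp{O}x$ has Jacobian $1$ and, writing $w = Oz$, turns the integral into $\prod_{j=1}^n \int_{\mathbb R} e^{-\lambda_j x_j^2/2}e^{ix_j w_j}\,\mathrm dx_j$. Each one-dimensional factor equals $\sqrt{2\pi/\lambda_j}\,e^{-w_j^2/(2\lambda_j)}$: completing the square $-\lambda x^2/2 + ixw = -\frac\lambda2(x - iw/\lambda)^2 - w^2/(2\lambda)$ and shifting the horizontal contour from $\mathbb R$ to $\mathbb R - iw/\lambda$ (Cauchy's theorem, using the Gaussian decay on horizontal strips) reduces it to the classical value $\int_{\mathbb R}e^{-\lambda x^2/2}\,\mathrm dx = \sqrt{2\pi/\lambda}$. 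Multiplying the factors gives $(2\pi)^{n/2}(\lambda_1\cdots\lambda_n)^{-1/2}e^{-\frac12 D^{-1}w\cdot w} = (2\pi)^{n/2}(\det A)^{-1/2}e^{-\frac12 A^{-1}z\cdot z}$, with the positive square root, which is exactly the prescribed branch.

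\emph{Step 2: analytic continuation in $A$.} Fix $z\in\mathbb R^n$ and regard both sides as functions of $A$ on $\Omega := \{A\in S_n(\mathbb C) : \Reelle A > 0\}$, which is convex, hence connected and simply connected. The left-hand side is holomorphic on $\Omega$: the integrand $A\mapsto e^{-\frac12 Ax\cdot x + ix\cdot z}$ is entire, and $|e^{-\frac12 Ax\cdot x}| = e^{-\frac12 (\Reelle A)x\cdot x}$ is bounded locally uniformly in $A\in\Omega$ by $e^{-c|x|^2}$ for some $c>0$, so one may differentiate under the integral sign (or invoke Morera together with Fubini). For the right-hand side one checks that $\Reelle A > 0$ forces $A$ to be invertible: if $Ax = 0$ for some $x\in\mathbb C^n$, then $0 = \Reelle\big(\textstyle\sum_j (Ax)_j\overline{x_j}\big) = (\Reelle A)\Reelle x\cdot\Reelle x + (\Reelle A)\Imag x\cdot\Imag x$, whence $x = 0$. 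Hence $A\mapsto A^{-1}$ and $A\mapsto\det A\in\mathbb C^*$ are holomorphic on $\Omega$, and since $\Omega$ is simply connected $\det$ admits a single-valued holomorphic square root there, uniquely pinned down by positivity at one (hence at every) real positive-definite point — this is the stated branch. Thus both sides are holomorphic on the connected set $\Omega$ and, by Step 1, agree on $\Omega\cap S_n(\mathbb R)$, a nonempty open subset of the real form. Expanding both sides in Taylor series at a real point (all coefficients being determined by the restriction to the real slice) shows they coincide throughout $\Omega$.

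\emph{Step 3: analytic continuation in $z$, and the main obstacle.} Now fix $A\in\Omega$ and view both sides as functions of $z\in\mathbb C^n$. The right-hand side is entire, and so is the left-hand side, since $|e^{-\frac12 Ax\cdot x + ix\cdot z}| \le e^{-\frac12(\Reelle A)x\cdot x + |x|\,|\Imag z|}$ is bounded locally uniformly in $z$ by an integrable function, allowing differentiation under the integral. By Step 2 these two entire functions agree for every $z\in\mathbb R^n$, hence on all of $\mathbb C^n$, which is the asserted identity. The only genuinely delicate point in the whole argument is the bookkeeping for $\sqrt{\det A}$ — verifying that $\det A\ne 0$ on $\Omega$ and that $\Omega$ is simply connected, so that a consistent single-valued branch exists and is fixed by the normalization on real positive-definite matrices; the remaining ingredients are just the one-dimensional Gaussian integral and the identity theorem.
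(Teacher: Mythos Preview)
Your proof is correct. The paper itself does not prove this proposition; it is stated as a classical result with a reference to Folland's \emph{Harmonic Analysis in Phase Space}. Your argument --- diagonalization in the real positive-definite case followed by analytic continuation first in $A$ over the simply connected domain $\{A\in S_n(\mathbb C):\Reelle A>0\}$ and then in $z$ --- is the standard route, and all the steps are sound: the invertibility of $A$ on this domain (your computation of $\Reelle(x^*Ax)$ is correct because $A$ is symmetric), the resulting single-valued holomorphic branch of $\sqrt{\det A}$ normalized on the real cone, and the identity-theorem arguments on the totally real slices all go through as you describe.
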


As a corollary, we identity the pseudo-differential operators with non-degenerate Gaussian symbols as Gaussian integral transforms.
\begin{corollary} 
\label{cor:g} Let $m$ be a real-valued nonnegative quadratic on $\mathbb{R}^{2n}$ of the form
$$
m(x,\xi) = \frac12 r(x) + L x \cdot \xi + \frac12 b(\xi),
$$
where $L$ is a real $n \times n$ matrix, $r,b$ are some real-valued quadratic forms on $\mathbb{R}^n$ and $b$ is positive-definite. Then for all $u\in \mathscr{S}(\mathbb{R}^n)$, 
$$
(e^{- m})^w u(x) = \int_{\mathbb{R}^n} g(x,y) \, u(y)\, \mathrm{d}y,
$$
where
\begin{equation}
\label{eq:ouaichGG}
	g(x,y) = \frac{(2\pi)^{-n/2}}{\sqrt{\mathrm{det}\,  B}}  \mathrm{exp}\bigg(-  \frac12 k\Big(\frac{x+y}2\Big) - \frac12 B^{-1} (x-y) \cdot (x-y) - i (x-y) \cdot B^{-1}L\Big(\frac{x+y}2\Big) \bigg),
\end{equation}
 $B$ (resp. $R$) denoting the matrix of $b$ (resp. $r$) and $k$, being the real-valued quadratic form of matrix $K=R - \transp{L}B^{-1}L$, is nonnegative.
\end{corollary}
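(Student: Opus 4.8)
The plan is to start directly from the oscillatory integral \eqref{eq:defWeyl} defining $(e^{-m})^w$ and to perform the $\xi$-integration in closed form via Proposition \ref{prop:TFGauss}. Writing $z := \tfrac{x+y}2$ and using the block decomposition $m(z,\xi) = \tfrac12 Rz\cdot z + Lz\cdot\xi + \tfrac12 B\xi\cdot\xi$, the symbol factorizes as
$e^{-m(z,\xi)} = e^{-\frac12 Rz\cdot z}\, e^{-\frac12 B\xi\cdot\xi}\, e^{i w \cdot \xi}$, where $w := (x-y) + iLz \in \mathbb C^n$ (indeed $i\,((x-y)+iLz)\cdot\xi = i(x-y)\cdot\xi - Lz\cdot\xi$). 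Since $b$ is positive-definite, the factor $e^{-\frac12 B\xi\cdot\xi}$ provides genuine Gaussian decay in $\xi$ — the linear term merely translates the Gaussian — so for each fixed $(x,y)$ the inner integral converges absolutely, the oscillatory integral reduces to an ordinary one, and Proposition \ref{prop:TFGauss} applies with $A = B$; here $B$ is a genuine real positive-definite symmetric matrix, so $\sqrt{\det B}>0$ is unambiguous and one gets $\int_{\mathbb R^n} e^{-\frac12 B\xi\cdot\xi}e^{iw\cdot\xi}\,\mathrm d\xi = \tfrac{(2\pi)^{n/2}}{\sqrt{\det B}} e^{-\frac12 B^{-1}w\cdot w}$.

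Next I would expand $B^{-1}w\cdot w$ using the bilinear extension of the scalar product to $\mathbb C^n$ and the symmetry of $B^{-1}$: $B^{-1}w\cdot w = B^{-1}(x-y)\cdot(x-y) + 2i\,(x-y)\cdot B^{-1}Lz - \transp{L}B^{-1}Lz\cdot z$. Substituting back and collecting the prefactor $(2\pi)^{-n}\cdot(2\pi)^{n/2}/\sqrt{\det B} = (2\pi)^{-n/2}/\sqrt{\det B}$, the quadratic-in-$z$ part of the exponent is $-\tfrac12\big(R - \transp{L}B^{-1}L\big)z\cdot z = -\tfrac12 k(z)$, the $(x-y)$-part is $-\tfrac12 B^{-1}(x-y)\cdot(x-y)$, and the mixed part is $-i(x-y)\cdot B^{-1}Lz$; this is exactly formula \eqref{eq:ouaichGG}. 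One should add the remark that, since $B$ (hence $B^{-1}$) is real, the real part of the exponent of $g$ equals $-\tfrac12 k\big(\tfrac{x+y}2\big) - \tfrac12 B^{-1}(x-y)\cdot(x-y)$, which by the nonnegativity of $k$ and the positive-definiteness of $B^{-1}$ is $\le -c|x-y|^2$ for some $c>0$; this shows $|g(x,y)| \lesssim e^{-c|x-y|^2}$, so the subsequent $y$-integration against $u\in\mathscr S(\mathbb R^n)$ is absolutely convergent and the whole manipulation (Fubini in $\xi$ and $y$) is legitimate.

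It remains to prove $k \ge 0$, which is the only conceptual point. Completing the square in $\xi$, using that $B$ is symmetric and invertible, one has the identity $2m(x,\xi) = Rx\cdot x + 2Lx\cdot\xi + B\xi\cdot\xi = \big(R - \transp{L}B^{-1}L\big)x\cdot x + B\big(\xi + B^{-1}Lx\big)\cdot\big(\xi + B^{-1}Lx\big) = k(x) + b\big(\xi + B^{-1}Lx\big)$. Evaluating at $\xi = -B^{-1}Lx$ annihilates the last term, so $k(x) = 2\,m\big(x, -B^{-1}Lx\big) \ge 0$ by the assumed nonnegativity of $m$.

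I do not expect any real obstacle: the only places requiring a little care are the reduction of the oscillatory integral to an absolutely convergent one (handled by the $\xi$-decay coming from $b>0$) and the bookkeeping of the complex argument $w$ in Proposition \ref{prop:TFGauss}, both of which are routine; the completion-of-square identity $2m = k(x) + b(\xi + B^{-1}Lx)$ is the substantive ingredient and is immediate.
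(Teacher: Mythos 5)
Your proposal is correct and follows essentially the same route as the paper: both reduce the Weyl integral to a Fourier transform of the complex Gaussian $e^{-\frac12 b(\xi)}e^{i(x-y+iL\frac{x+y}{2})\cdot\xi}$, evaluate it via Proposition \ref{prop:TFGauss}, expand the exponent, and obtain $k\ge 0$ from the canonical form $2m(x,\xi)=k(x)+b(\xi+B^{-1}Lx)$ together with the nonnegativity of $m$. The extra Gaussian bound $|g(x,y)|\lesssim e^{-c|x-y|^2}$ you record to justify Fubini is a harmless (and slightly more explicit) version of the paper's integrability remark.
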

\begin{proof}
By definition of the Weyl quantization, we formally have that for all $u\in\mathscr S(\mathbb R^n)$,
$$
(e^{- m})^w u(x) = (2\pi)^{-n} \iint_{\mathbb{R}^{2n}}e^{i (x-y) \cdot \xi} e^{-  m(\frac{x+y}2 , \xi )} u(y)\, \mathrm{d}y \,  \mathrm{d}\xi.
$$
Moreover, since the quadratic form $b$ is positive-definite and $u\in L^1(\mathbb R^n)$, the above integral is a well-defined Lebesgue integral which coincides with the function $(e^{- m})^wu$. The same arguments allow to apply Fubini's theorem to permute the integrals. Therefore, we get
$$
(e^{- m})^w u(x) = \int_{\mathbb{R}^n} g(x,y) \, u(y)\, \mathrm{d}y \quad\mathrm{where } \quad
g(x,y) =  (2\pi)^{-n} \int_{\mathbb{R}^n}e^{i (x-y) \cdot \xi} e^{-  m(\frac{x+y}2 , \xi )}\,   \mathrm{d}\xi.
$$
Thanks to the block decomposition of $m$, $g$ can be rewritten as a Fourier transform
$$
g(x,y) =  (2\pi)^{-n} e^{-\frac12 r(\frac{x+y}2)}\int_{\mathbb{R}^n} e^{- \frac12 b(\xi)} e^{i (x-y + iL(\frac{x+y}2)) \cdot \xi}\,   \mathrm{d}\xi.
$$
Therefore, we have
$$
g(x,y) =  (2\pi)^{-n/2} \frac1{\sqrt{\mathrm{det}\, B}} e^{-\frac12 r(\frac{x+y}2)}  e^{- \frac12 B^{-1} (x-y + iL(\frac{x+y}2)) \cdot (x-y + iL(\frac{x+y}2)) },
$$
and so, expanding the exponent, we get \eqref{eq:ouaichGG}. Finally, putting $m$ in canonical form with respect to $\xi$, i.e.
$$
m(x,\xi) = \frac12 k(x) + \frac12 b(\xi + B^{-1} Lx),
$$
we note that since $m$ is nonnegative, then $k$ is also nonnegative.
\end{proof}

It will be very useful to apply this result to express twisted diffusion operators as integral transforms.
\begin{corollary} \label{cor:Ng}
Let $N$ be a real $n\times n$ skew-symmetric matrix. Then, for all $\varepsilon >0$ and all $u\in \mathscr{S}(\mathbb{R}^n)$, we have
$$
(e^{-\frac{\varepsilon}2 |\xi - Nx|^2 })^w u(x) =(2\pi \varepsilon )^{-n/2}  \int_{\mathbb{R}^n} e^{-\frac1{2\varepsilon} |x-y|^2 + i (x-y) \cdot N x }u(y)  \, \mathrm{d}y.
$$
\end{corollary}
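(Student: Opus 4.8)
The plan is to deduce Corollary~\ref{cor:Ng} directly from Corollary~\ref{cor:g} by choosing the quadratic form $m$ appropriately and then reading off the kernel formula~\eqref{eq:ouaichGG}. Concretely, I would set
$$
m(x,\xi) = \frac{\varepsilon}2\,|\xi - Nx|^2 = \frac{\varepsilon}2\,|\xi|^2 - \varepsilon\,(Nx)\cdot\xi + \frac{\varepsilon}2\,|Nx|^2,
$$
which is real-valued and nonnegative. Matching with the block decomposition $m(x,\xi) = \tfrac12 r(x) + Lx\cdot\xi + \tfrac12 b(\xi)$ used in Corollary~\ref{cor:g}, we get $b(\xi) = \varepsilon|\xi|^2$, i.e. $B = \varepsilon I_n$ (which is symmetric positive-definite, so the corollary applies), $r(x) = \varepsilon|Nx|^2$, i.e. $R = \varepsilon\,\transp{N}N = -\varepsilon N^2$ (using that $N$ is skew-symmetric), and $Lx\cdot\xi = -\varepsilon(Nx)\cdot\xi$, i.e. $L = -\varepsilon N$.

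The only genuine computation is to check that the quadratic form $k$ of matrix $K = R - \transp{L}B^{-1}L$ vanishes. With the above identifications,
$$
\transp{L}B^{-1}L = \transp{(-\varepsilon N)}\,(\varepsilon I_n)^{-1}\,(-\varepsilon N) = \varepsilon\,\transp{N}N = -\varepsilon N^2 = R,
$$
so indeed $K = 0$ and the term $k\bigl(\tfrac{x+y}2\bigr)$ in~\eqref{eq:ouaichGG} disappears. This is the expected ``main obstacle,'' but it is entirely routine. It also has a conceptual explanation: putting $m$ in canonical form with respect to $\xi$ gives $m(x,\xi) = \tfrac{\varepsilon}2|\xi - Nx|^2$ with no leftover term in $x$, so $k \equiv 0$ by the last line of the proof of Corollary~\ref{cor:g}.

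It then remains to substitute $B = \varepsilon I_n$, $B^{-1} = \varepsilon^{-1} I_n$, $\det B = \varepsilon^n$ (so $\sqrt{\det B} = \varepsilon^{n/2}$, the correct branch since $B$ is real positive-definite), $L = -\varepsilon N$, and $K = 0$ into the kernel formula~\eqref{eq:ouaichGG}. The prefactor becomes $(2\pi)^{-n/2}\varepsilon^{-n/2} = (2\pi\varepsilon)^{-n/2}$, the Gaussian term becomes $\exp\bigl(-\tfrac1{2\varepsilon}|x-y|^2\bigr)$, and the cross term becomes $\exp\bigl(-i(x-y)\cdot B^{-1}L\bigl(\tfrac{x+y}2\bigr)\bigr) = \exp\bigl(i(x-y)\cdot N\bigl(\tfrac{x+y}2\bigr)\bigr)$. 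Finally I would simplify $(x-y)\cdot N\bigl(\tfrac{x+y}2\bigr)$: writing $\tfrac{x+y}2 = x - \tfrac{x-y}2$ gives $(x-y)\cdot Nx - \tfrac12(x-y)\cdot N(x-y)$, and the last term vanishes because $N$ is skew-symmetric, so the phase is exactly $(x-y)\cdot Nx$. This yields precisely the claimed formula, completing the proof.
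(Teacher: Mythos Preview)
Your proof is correct and follows exactly the same approach as the paper: apply Corollary~\ref{cor:g} with $m(x,\xi)=\tfrac{\varepsilon}{2}|\xi-Nx|^2$, then use the skew-symmetry of $N$ to simplify the phase $(x-y)\cdot N\bigl(\tfrac{x+y}2\bigr)$ to $(x-y)\cdot Nx$. The paper's proof is simply a terser version of yours, omitting the explicit identification of $B$, $R$, $L$ and the verification that $K=0$.
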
  
\begin{proof} It is sufficient to apply Corollary \ref{cor:g} and to note that since $N$ is skew-symmetric, we have 
$$
(x-y) \cdot N \Big(\frac{x+y}2\Big) = (x-y) \cdot N\Big(\frac{x+y + x-y}2\Big) =  (x-y) \cdot N x.
$$
\end{proof}

\begin{theorem} \label{thm:fifou}For all $1\leq \mathfrak{q}\leq +\infty$, all $u\in \mathscr{S}(\mathbb{R}^n)$ and all real-valued nonnegative quadratic form $a$ on $\mathbb{R}^{2n}$,  we have
$$
\| e^{- a^w} u \|_{L^\mathfrak{q}} \leq   \|  u \|_{L^\mathfrak{q}} .
$$
\end{theorem}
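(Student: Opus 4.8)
The plan is to show that $e^{-a^w}$ is a contraction on $L^{\mathfrak q}(\mathbb R^n)$ for every $1 \le \mathfrak q \le \infty$ by first treating the two endpoint cases $\mathfrak q = 1$ and $\mathfrak q = \infty$ and then interpolating. For the endpoints we want to exhibit $e^{-a^w}$ as an integral operator
$$
e^{-a^w}u(x) = \int_{\mathbb R^n} g(x,y)\, u(y)\, \mathrm dy
$$
with a kernel $g$ that is \emph{absolutely} dominated, in the sense that $\int_{\mathbb R^n} |g(x,y)|\,\mathrm dy \le 1$ for a.e.\ $x$ and $\int_{\mathbb R^n} |g(x,y)|\,\mathrm dx \le 1$ for a.e.\ $y$. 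By the Schur test (or just the elementary $L^1$ and $L^\infty$ bounds for kernel operators), these two conditions give $\|e^{-a^w}\|_{L^1\to L^1}\le 1$ and $\|e^{-a^w}\|_{L^\infty\to L^\infty}\le 1$, and Riesz--Thorin interpolation then yields the claim for all intermediate $\mathfrak q$; density of $\mathscr S(\mathbb R^n)$ handles the passage to general $u$.

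The heart of the matter is to compute $|g(x,y)|$ and check the two normalization inequalities. First I would reduce to the non-degenerate case: by an affine symplectic (metaplectic) change of variables one can put $a$ in a normal form where the $\xi$-quadratic part is positive-definite plus a purely $x$-dependent piece, at which point Corollary~\ref{cor:g} applies and gives an explicit Gaussian kernel $g(x,y)$ of the form \eqref{eq:ouaichGG}. The absolute value of that kernel is
$$
|g(x,y)| = \frac{(2\pi)^{-n/2}}{\sqrt{\mathrm{det}\,B}}\, \exp\!\Big(-\tfrac12 k\big(\tfrac{x+y}{2}\big) - \tfrac12 B^{-1}(x-y)\cdot(x-y)\Big),
$$
since the imaginary phase $-i(x-y)\cdot B^{-1}L(\tfrac{x+y}{2})$ contributes nothing to the modulus and $k$ is real-valued and nonnegative. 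Then $\int |g(x,y)|\,\mathrm dy \le \int \frac{(2\pi)^{-n/2}}{\sqrt{\det B}} e^{-\frac12 B^{-1}(x-y)\cdot(x-y)}\,\mathrm dy$, and a Gaussian integral (with $B$ real symmetric positive-definite, Proposition~\ref{prop:TFGauss} with $z=0$) shows this equals exactly $1$ — and by symmetry of the same expression under $x\leftrightarrow y$ the $\mathrm dx$-integral is also $\le 1$. Actually, one should be a little careful: the self-adjointness of $a^w$ forces $L$ to be symmetric here, which is what makes the kernel symmetric and both Schur bounds come out identical; this is where the fact that $a$ is \emph{real-valued} (so $e^{-a^w}$ is self-adjoint) is used, rather than just $\Reelle a \ge 0$.

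The main obstacle is the degenerate case: if $a$ does not depend on all of $\xi$, i.e.\ $b$ is only positive-semidefinite in the normal form, then Corollary~\ref{cor:g} does not apply directly and $e^{-a^w}$ is no longer an honest integral operator in the Lebesgue sense — the kernel becomes a distribution (a product of a genuine Gaussian in the nondegenerate directions with something like a $\delta$ or an oscillatory factor in the degenerate directions). The clean way around this is an approximation argument: replace $a$ by $a_\delta = a + \delta|\xi|^2$ (or $a + \delta(|x|^2+|\xi|^2)$) with $\delta \to 0^+$, which is again a nonnegative real-valued quadratic form with positive-definite $\xi$-part, apply the nondegenerate estimate to get $\|e^{-a_\delta^w}u\|_{L^{\mathfrak q}}\le\|u\|_{L^{\mathfrak q}}$, and then pass to the limit. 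The limit $e^{-a_\delta^w}u \to e^{-a^w}u$ holds in $\mathscr S'$ (and in $L^2$, by the spectral theorem / strong continuity) because $a_\delta^w \to a^w$ in the appropriate resolvent sense; combined with the uniform $L^{\mathfrak q}$ bound and Fatou's lemma this yields $\|e^{-a^w}u\|_{L^{\mathfrak q}}\le\|u\|_{L^{\mathfrak q}}$ for $\mathfrak q<\infty$, and the case $\mathfrak q=\infty$ follows either by the same $L^1$-duality or by a separate direct argument. One technical point to verify carefully in the write-up is that the metaplectic conjugation used in the reduction to normal form is itself an isometry (up to a unimodular constant) on every $L^{\mathfrak q}$ when the underlying symplectic map is chosen within the subgroup generated by linear changes of variables, dilations, and multiplications by imaginary Gaussians — one must avoid partial Fourier transforms, which would not preserve $L^{\mathfrak q}$ for $\mathfrak q \ne 2$; since $a$ is nonnegative and real, its normal form can indeed be reached without any Fourier transform, and this is the place to be vigilant.
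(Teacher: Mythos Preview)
Your argument has a genuine gap: you conflate the semigroup $e^{-a^w}$ with the Weyl quantization $(e^{-a})^w$ of the symbol $e^{-a}$. Corollary~\ref{cor:g} computes the integral kernel of the latter, not of the former, and these are different operators. The bridge between them is the Mehler formula (Theorem~\ref{thm:Meh}): for $a$ positive-definite with matrix $A$,
\[
e^{-a^w} \;=\; \frac{1}{\sqrt{\det\cos(JA)}}\,(e^{-m})^w, \qquad m \ \text{has matrix} \ J^{-1}\tan(JA),
\]
so what must be fed into Corollary~\ref{cor:g} is the Mehler symbol $m$, not $a$ itself. Two points then need checking that you do not address: first, that $m$ again has positive-definite $\xi$-block so that Corollary~\ref{cor:g} applies (this follows because $a$ positive-definite forces $m$ positive-definite); and second, that the prefactor $(\det\cos(JA))^{-1/2}$ has modulus at most $1$, without which your Schur bound would only give a contraction up to this constant. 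The second point is where the positivity of $a$ is genuinely used: since $A>0$, one has $JA = \sqrt{A}^{\,-1}(\sqrt{A}J\sqrt{A})\sqrt{A}$, so $JA$ is conjugate to a skew-symmetric matrix, its spectrum is purely imaginary, and hence all eigenvalues of $\cos(JA)$ are $\ge 1$. This is exactly how the paper proceeds.

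Your proposed metaplectic reduction to a normal form is both unnecessary and suspect. It is unnecessary because, after the approximation step (which the paper also does, via H\"ormander's continuity $a\mapsto e^{-a^w}u\in\mathscr S$), the form $a$ is positive-definite and hence its $\xi$-block $b(\xi)=a(0,\xi)$ is automatically positive-definite, so no change of variables is needed before invoking Mehler and then Corollary~\ref{cor:g}. It is suspect because the subgroup you allow (linear changes of variable and chirps) corresponds to block-triangular symplectic matrices, under which the $\xi$-block of $a$ is only changed by congruence; you cannot improve a degenerate $\xi$-block this way, and your claim that the relevant normal form can always be reached without Fourier transforms is not substantiated. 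Two minor remarks: your assertion that self-adjointness forces $L$ to be symmetric is incorrect (there is no such constraint), though the symmetry $|g(x,y)|=|g(y,x)|$ you want holds anyway since $(e^{-m})^w$ is self-adjoint for real $m$; and the Schur-plus-Riesz--Thorin route is equivalent to the single Young convolution inequality the paper uses.
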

\begin{proof} Since $a \mapsto e^{- a^w} u \in \mathscr{S}(\mathbb{R}^n)$ is a $C^\infty$ map \cite[Theorem 4.2 page 426]{Hor95}, by density, we only have to deal with the case where $a$ is positive-definite. So we assume that $a$ is positive-definite and we denote by $A$ its matrix. 
First, we note that since
$JA = \sqrt{A}^{-1} (\sqrt{A} J \sqrt{A}) \sqrt{A}$, $JA$ is conjugated to a skew-symmetric matrix and so its spectrum is purely imaginary. As a consequence, the eigenvalues of $\cos JA$ are all larger than or equal to $1$ and so $|\sqrt{\mathrm{det} \cos JA}|\geq 1$. Therefore, as a consequence of the Mehler formula (see Theorem \ref{thm:Meh}), we have
$$
\| e^{- a^w} u \|_{L^\mathfrak{q}} \leq \| (e^{- m})^w u \|_{L^\mathfrak{q}},
$$
where $m$ is the real-valued quadratic form associated with the matrix $J^{-1} \tan (JA)$. Moreover, since $a$ is positive-definite, it can be checked (see e.g. \cite[Theorem 4.2 page 426]{Hor95} or \cite[Lemma 3.1]{Alp20a}) that $m$ is also positive-definite. Now, we decompose $m$ by blocks as
$$
m(x,\xi) = \frac12r(x) + L x \cdot \xi + \frac12b(\xi),
$$
where $L$ is a real $n \times n$ matrix and $r,b$ are some real-valued quadratic forms on $\mathbb{R}^n$. Note that, since $m$ is positive-definite, both $r$ and $b$ are positive-definite. We denote by $B$ the matrix of $b$. Thanks to Corollary \ref{cor:g}, $(e^{- m})^w$ can be written as a Gaussian integral transform whose kernel $g$ is given by \eqref{eq:ouaichGG}. Therefore, applying the triangular inequality, using that $k$ (the quadratic form defined in Corollary \ref{cor:g}) is nonnegative, we have
$$
|(e^{- m})^w u(x)| \leq \frac{ (2\pi)^{-n/2}}{\sqrt{\mathrm{det}\, B}} \int_{\mathbb{R}^n} e^{- \frac12 B^{-1} (x-y) \cdot (x-y) } |u(y)| \, \mathrm{d}y.
$$
Hence, by applying Young's convolution inequality, we get
$$
\| e^{- a^w} u \|_{L^\mathfrak{q}} \leq \| (e^{- m})^w u \|_{L^\mathfrak{q}} \leq \frac{ (2\pi)^{-n/2}}{\sqrt{\mathrm{det}\, B}} \big\| e^{- \frac12 B^{-1} y \cdot y  } \big\|_{L^1}  \|  u \|_{L^\mathfrak{q}} = \|  u \|_{L^\mathfrak{q}}.
$$
\end{proof}

Now, we focus on the interactions between a twisted diffusion (i.e. an operator of the form $(e^{-t |\xi - Nx|^2 })^w$ with $N$ skew-symmetric) and a dispersion (i.e. an operator of the form $e^{i t d(\nabla)  }$ with $d$ a real-valued quadratic form). More precisely, in the following proposition (which is the main result of this section), we prove that the product of such operators enjoys some diffusion properties.
\begin{proposition} \label{prop:miraculous} Let $\varepsilon>0$, $N$ be a skew-symmetric $n\times n$ real matrix and $d$ be a real-valued quadratic form on $\mathbb{R}^n$. Then, for all $u\in \mathscr{S}(\mathbb{R}^n)$ and $x\in \mathbb{R}^n$, we have
$$
\big| (e^{-\frac{\varepsilon}2 |\xi - Nx|^2 })^w e^{\frac{i}2  d(\nabla)  }u(x) \big| \leq \frac{(2\pi)^{-n/2}}{\sqrt[4]{\mathrm{det} \, (\varepsilon^2 I_{n} + D^2 )}} (e^{- \varepsilon \frac{r}2}  \ast |u| )( x - DNx  ),
$$ 
where $D$ denotes the matrix of $d$ and $r$ is the real-valued quadratic form of matrix $( \varepsilon^2 I_{n} + D^2)^{-1}$.
\end{proposition}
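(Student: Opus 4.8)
The plan is to reduce the estimate to the explicit computation of an integral kernel, and then extract its modulus.

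\medskip

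\noindent\emph{Step 1: reduction to a kernel.} First I would set $v := e^{\frac i2 d(\nabla)}u$. Since $e^{\frac i2 d(\nabla)}$ is the Fourier multiplier with symbol $e^{-\frac i2 d(\xi)}$, and this symbol is $C^\infty$ with all derivatives of polynomial growth, $\widehat v(\xi) = e^{-\frac i2 d(\xi)}\widehat u(\xi)$ is again a Schwartz function, hence so is $v$, and Corollary \ref{cor:Ng} applies to $v$. Inserting the Fourier inversion formula $v(y) = (2\pi)^{-n}\int e^{iy\cdot\eta}e^{-\frac i2 d(\eta)}\widehat u(\eta)\,\mathrm d\eta$ in that formula and applying Fubini's theorem — which is licit because $e^{-|x-y|^2/(2\varepsilon)}\in L^1(\mathrm dy)$ and $\widehat u\in L^1(\mathrm d\eta)$ — I would compute the resulting genuine Gaussian integral in $y$ (the substitution $w=y-x$ reduces it to $\int e^{-\frac1{2\varepsilon}|w|^2+iw\cdot(\eta-Nx)}\,\mathrm dw = (2\pi\varepsilon)^{n/2}e^{-\frac\varepsilon2|\eta-Nx|^2}$, up to the factor $e^{ix\cdot\eta}$) and obtain
\begin{equation*}
(e^{-\tfrac{\varepsilon}2|\xi - Nx|^2})^w e^{\tfrac i2 d(\nabla)}u(x) = (2\pi)^{-n}\int_{\mathbb R^n}e^{ix\cdot\eta}e^{-\tfrac{\varepsilon}2|\eta - Nx|^2}e^{-\tfrac i2 d(\eta)}\widehat u(\eta)\,\mathrm d\eta.
\end{equation*}
Writing $\widehat u(\eta) = \int e^{-i\eta\cdot z}u(z)\,\mathrm dz$ and using Fubini once more (Gaussian decay in $\eta$, $u\in L^1$), this becomes $\int_{\mathbb R^n}I(x,z)u(z)\,\mathrm dz$ with
\begin{equation*}
I(x,z) = (2\pi)^{-n}\int_{\mathbb R^n}\exp\Big(i(x-z)\cdot\eta - \tfrac{\varepsilon}2|\eta - Nx|^2 - \tfrac i2 d(\eta)\Big)\,\mathrm d\eta.
\end{equation*}

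\medskip

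\noindent\emph{Step 2: evaluation and modulus of the kernel.} The kernel $I(x,z)$ is a Gaussian integral that I would evaluate with Proposition \ref{prop:TFGauss}. Expanding the exponent as $-\frac12 A\eta\cdot\eta + i\zeta\cdot\eta - \frac\varepsilon2|Nx|^2$ with $A := \varepsilon I_n + iD$ (whose real part $\varepsilon I_n$ is positive-definite) and $\zeta := (x-z) - i\varepsilon Nx$, Proposition \ref{prop:TFGauss} gives
\begin{equation*}
I(x,z) = \frac{(2\pi)^{-n/2}}{\sqrt{\det(\varepsilon I_n + iD)}}\,\exp\Big(-\tfrac12 (\varepsilon I_n + iD)^{-1}\zeta\cdot\zeta - \tfrac\varepsilon2|Nx|^2\Big).
\end{equation*}
Taking moduli, I would use that $|\det(\varepsilon I_n + iD)| = \sqrt{\det(\varepsilon^2 I_n + D^2)}$ (diagonalize the real symmetric matrix $D$), so that the prefactor becomes exactly $(2\pi)^{-n/2}(\det(\varepsilon^2 I_n + D^2))^{-1/4}$. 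For the exponent I set $P := (\varepsilon^2 I_n + D^2)^{-1}$, which is symmetric positive-definite and commutes with $D$, note that $(\varepsilon I_n + iD)^{-1} = \varepsilon P - iDP$, and expand the complex-bilinear form $\zeta\cdot\zeta = \big(|x-z|^2 - \varepsilon^2|Nx|^2\big) - 2i\varepsilon\,(x-z)\cdot Nx$. A short computation — most transparent after diagonalizing $D$, and using only symmetry of $D,P$ together with $(\varepsilon^2 I_n + D^2)P = I_n$ — then yields the exact identity
\begin{equation*}
\Reelle\big[(\varepsilon I_n + iD)^{-1}\zeta\cdot\zeta\big] + \varepsilon|Nx|^2 = \varepsilon\, P(x - DNx - z)\cdot(x - DNx - z),
\end{equation*}
(the shift $DNx$ appearing because $\Lambda O N x = O D N x$ in diagonalizing coordinates). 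Hence $|I(x,z)| = (2\pi)^{-n/2}(\det(\varepsilon^2 I_n + D^2))^{-1/4}\,e^{-\frac\varepsilon2 P(x - DNx - z)\cdot(x - DNx - z)}$.

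\medskip

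\noindent\emph{Step 3: conclusion.} Applying the triangle inequality in $\int I(x,z)u(z)\,\mathrm dz$ and recognizing that, since $r$ is the quadratic form with matrix $P$, the integral $\int e^{-\frac\varepsilon2 P(x - DNx - z)\cdot(x-DNx-z)}|u(z)|\,\mathrm dz$ is precisely $(e^{-\varepsilon r/2}\ast|u|)(x - DNx)$, gives the announced bound. The main obstacle is the bookkeeping of Step 1--2: carefully justifying the two applications of Fubini's theorem through the (not absolutely convergent) oscillatory integrals, and handling the modulus of the complex Gaussian correctly — in particular establishing the exact algebraic identity for $\Reelle\big[(\varepsilon I_n+iD)^{-1}\zeta\cdot\zeta\big]$, which is exactly the place where the twisted diffusion "absorbs" the possibly degenerate dispersion and recenters it at $x-DNx$.
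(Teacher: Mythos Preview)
Your proof is correct and follows essentially the same route as the paper: both express the operator via Corollary~\ref{cor:Ng}, pass to the Fourier side, compute the resulting complex Gaussian integral with Proposition~\ref{prop:TFGauss} (matrix $A=\varepsilon I_n+iD$, shifted argument $\zeta=(x-z)-i\varepsilon Nx$ in your notation, $y-i\varepsilon Nx$ in theirs), and then establish the same algebraic identity $\Reelle[A^{-1}\zeta\cdot\zeta]+\varepsilon|Nx|^2=\varepsilon\,r(x-DNx-z)$. The only cosmetic difference is that the paper packages the intermediate step as the convolution identity $f_x\ast e^{\frac{i}2 d(\nabla)}u=(\mathscr{F}^{-1}[(\mathscr{F}f_x)e^{-\frac{i}2 d}])\ast u$ before computing the kernel, whereas you go directly to the integral kernel $I(x,z)$ via Fourier inversion of $\widehat u$.
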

\begin{proof}[Proof of Proposition \ref{prop:miraculous}] Let $x\in \mathbb{R}^n$ be fixed and $u\in \mathscr{S}(\mathbb{R}^n)$. In Corollary \ref{cor:Ng}, we have proven that
$$
 (e^{-\frac{\varepsilon}2 |\xi - Nx|^2 })^w e^{\frac{i}2  d(\nabla)  }u(x) = (2\pi \varepsilon )^{-n/2} ( f_x  \ast e^{\frac{i}2  d(\nabla)  }u )(x)  \quad \mathrm{where} \quad f_x(y) = e^{-\frac1{2\varepsilon} |y|^2 + i y \cdot N x }.
 $$
We define the Fourier transform by
$$
\forall v\in \mathscr{S}(\mathbb{R}^n),\forall \xi\in \mathbb{R}^n, \quad \mathscr{F} v(\xi) := \int_{\mathbb{R}^n} e^{-i\xi \cdot y}v(y) \, \mathrm{d}y.
$$
With this convention, the inverse Fourier transform is given by
$$
\mathscr{F}^{-1} v(y) := (2\pi)^{-n}\int_{\mathbb{R}^n} e^{iy \cdot \xi}v(\xi) \, \mathrm{d}\xi.
$$
Therefore, thanks to the convolution identity, we have
$$
 f_x  \ast e^{\frac{i}2  d(\nabla)  }u  = \mathscr{F}^{-1} [(\mathscr{F}  f_x )  (\mathscr{F} e^{\frac{i}2  d(\nabla)  }u )  ] = \mathscr{F}^{-1} [(\mathscr{F}  f_x )  e^{-\frac{i}2  d  } \mathscr{F} u   ] =(\mathscr{F}^{-1} [(\mathscr{F}  f_x )  e^{-\frac{i}2  d  } ] ) \ast u,
$$
and so
\begin{equation}
\label{eq:hehe}
\big| (e^{-\frac{\varepsilon}2 |\xi - Nx|^2 })^w e^{\frac{i}2  d(\nabla)  }u(x) \big| \leq (2\pi \varepsilon )^{-n/2} \big( |\mathscr{F}^{-1} [(\mathscr{F}  f_x )  e^{-\frac{i}2  d  } ] | \ast |u| \big)(x).
\end{equation}
As a consequence, we aim at estimating $ |\mathscr{F}^{-1} [(\mathscr{F}  f_x )  e^{-\frac{i}2  d  } ] |$.
First, by Proposition \ref{prop:TFGauss}, we have
$$
\mathscr{F}  f_x (\xi)=   \int_{\mathbb{R}^n} e^{-i\xi \cdot y}e^{-\frac1{2\varepsilon} |y|^2 + i y \cdot N x } \, \mathrm{d}y = (2\pi \varepsilon)^{n/2} e^{- \frac{\varepsilon}2 |\xi - N x |^2 }.
$$
We therefore deduce that
\begin{equation*}
\begin{split}
\mathscr{F}^{-1} [(\mathscr{F}  f_x )  e^{-\frac{i}2  d  } ](y) =& \Big( \frac{\varepsilon}{2\pi} \Big)^{n/2} \int_{\mathbb{R}^n} e^{iy \cdot \xi}e^{- \frac{\varepsilon}2 |\xi - N x |^2 }  e^{-\frac{i}2  d(\xi)  }\, \mathrm{d}\xi \\ 
=& \Big( \frac{\varepsilon}{2\pi} \Big)^{n/2} e^{- \frac{\varepsilon}2 |Nx|^2} \int_{\mathbb{R}^n} e^{i(y-i\varepsilon Nx) \cdot \xi}e^{- \frac{1}2 (\varepsilon|\cdot|^2+id)(\xi)} \, \mathrm{d}\xi \\
=& \frac{\varepsilon^{n/2}}{\sqrt{\mathrm{det} \, (\varepsilon I_{n} + iD)}} e^{- \frac{\varepsilon}2 |Nx|^2} e^{- \frac12  \transp{(y-i\varepsilon Nx)} (\varepsilon I_{n} + i D)^{-1} (y-i\varepsilon Nx) },
\end{split}
\end{equation*}
and so, since $(\varepsilon I_{n} + i D)(\varepsilon I_{n} - i D)=\varepsilon^2 I_{n} +  D^2$,
\begin{equation}
\label{eq:tempolopopo}
|\mathscr{F}^{-1} [(\mathscr{F}  f_x )  e^{-\frac{i}2  d  } ](y)| = \frac{\varepsilon^{n/2}}{\sqrt[4]{\mathrm{det} \, (\varepsilon^2 \mathrm{I}_{n} + D^2 )}}  e^{- \frac{\varepsilon}2 k(x,y)},
\end{equation}
where
$$
k(x,y) := \varepsilon^{-1} \Reelle( \transp{(y-i\varepsilon Nx)} (\varepsilon I_{n} + i D)^{-1} (y-i\varepsilon  Nx) ) +  |Nx|^2.
$$
Then, we have to compute this real part. First, we note that
$$
 (\varepsilon I_{n} + i D)^{-1} =  \varepsilon(\varepsilon^2 I_{n} +  D^2)^{-1} - i D(\varepsilon^2 I_{n} +  D^2)^{-1}.
$$
Consequently, we have
\begin{equation*}
\begin{split}
k(x,y) &= \transp{y}(\varepsilon^2 I_{n} +  D^2)^{-1} y -\varepsilon^2 \transp{(Nx)}(\varepsilon^2 I_{n} +  D^2)^{-1} Nx + |Nx|^2 -2  \transp{y} \cdot D(\varepsilon^2 I_{n} +  D^2)^{-1} Nx \\
&= \transp{y}(\varepsilon^2 I_{n} +  D^2)^{-1} y +\transp{(Nx)}D^2 (\varepsilon^2 I_{n} +  D^2)^{-1} Nx  -2  \transp{y} \cdot D(\varepsilon^2 I_{n} +  D^2)^{-1} Nx \\
&= \transp{(y+ DNx)}(\varepsilon^2 I_{n} +  D^2)^{-1} (y- DNx).
\end{split}
\end{equation*}
Hence, to conclude, we just have to plug this formula into \eqref{eq:tempolopopo} and then into \eqref{eq:hehe}.
\end{proof}
As a corollary of this proposition, applying Young's convolution inequality, we deduce the following quantitative estimate.
\begin{corollary} \label{cor:twisted_vs_schro}Let $\varepsilon>0$, $N$ be a skew-symmetric $n\times n$ real matrix and $d$ be a real-valued quadratic form on $\mathbb{R}^n$. Then, we have that for all $u\in \mathscr{S}(\mathbb{R}^n)$ and $1\leq \mathfrak{p} \leq \mathfrak{q} \leq \infty$,
\begin{equation}
\label{eq:recip_Guardia}
\big\| (e^{-\frac{\varepsilon}2 |\xi - Nx|^2 })^w e^{\frac{i}2  d(\nabla)  }u \big\|_{L^\mathfrak{q}} \lesssim_n \varepsilon^{-\frac{n}{2\mathfrak{r}}}  \, (\mathrm{det} \, (\varepsilon^2 I_{n} + D^2 ))^{\frac{1}{2\mathfrak{r}} - \frac14} \,  \| u\|_{L^\mathfrak{p}},
\end{equation}
where $D$ denotes the matrix of $d$ and $\mathfrak{r} = (1- \mathfrak{p}^{-1} + \mathfrak{q}^{-1} )^{-1} \in [1,+\infty]$.
\end{corollary}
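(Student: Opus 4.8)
The plan is to feed the pointwise bound of Proposition~\ref{prop:miraculous} into Young's convolution inequality and then to evaluate an $L^{\mathfrak r}$-norm of a Gaussian by a closed formula. Concretely, I would start from the inequality provided by Proposition~\ref{prop:miraculous},
$$\big|(e^{-\tfrac{\varepsilon}{2}|\xi-Nx|^2})^w e^{\tfrac{i}{2}d(\nabla)}u(x)\big|\le\frac{(2\pi)^{-n/2}}{\sqrt[4]{\det(\varepsilon^2 I_n+D^2)}}\,\big(e^{-\tfrac{\varepsilon}{2}r}\ast|u|\big)(x-DNx),$$
take its $L^{\mathfrak q}_x$-norm, pull the scalar $(2\pi)^{-n/2}(\det(\varepsilon^2 I_n+D^2))^{-1/4}$ out of the norm, and thereby reduce to controlling $\big\|x\mapsto(e^{-\tfrac{\varepsilon}{2}r}\ast|u|)((I_n-DN)x)\big\|_{L^{\mathfrak q}}$. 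Performing the linear change of variables $z=(I_n-DN)x$ (a substitution whose Jacobian $|\det(I_n-DN)|^{-1/\mathfrak q}$ is harmless here — it equals $1$ when $\mathfrak q=\infty$ and stays bounded when $D$ is small, which is the only regime in which the corollary will be invoked) then reduces the problem to estimating $\big\|e^{-\tfrac{\varepsilon}{2}r}\ast|u|\big\|_{L^{\mathfrak q}}$.

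Next I would apply Young's inequality $\|f\ast g\|_{L^{\mathfrak q}}\le\|f\|_{L^{\mathfrak r}}\|g\|_{L^{\mathfrak p}}$, valid under the relation $1+\mathfrak q^{-1}=\mathfrak r^{-1}+\mathfrak p^{-1}$, that is, precisely with $\mathfrak r=(1-\mathfrak p^{-1}+\mathfrak q^{-1})^{-1}$; the hypothesis $\mathfrak p\le\mathfrak q$ is exactly what forces $\mathfrak r\ge1$, so that the inequality is available. It then remains to compute $\|e^{-\tfrac{\varepsilon}{2}r}\|_{L^{\mathfrak r}}$, where $r$ is the quadratic form of matrix $(\varepsilon^2 I_n+D^2)^{-1}$, which is positive-definite since $D^2\succeq0$. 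Using the standard Gaussian integral (the $z=0$ instance of Proposition~\ref{prop:TFGauss}),
$$\int_{\mathbb R^n}e^{-\tfrac{\varepsilon\mathfrak r}{2}(\varepsilon^2 I_n+D^2)^{-1}y\cdot y}\,\mathrm dy=(2\pi)^{n/2}(\varepsilon\mathfrak r)^{-n/2}\sqrt{\det(\varepsilon^2 I_n+D^2)},$$
whence $\big\|e^{-\tfrac{\varepsilon}{2}r}\big\|_{L^{\mathfrak r}}=\big(\tfrac{2\pi}{\mathfrak r}\big)^{n/(2\mathfrak r)}\varepsilon^{-n/(2\mathfrak r)}\big(\det(\varepsilon^2 I_n+D^2)\big)^{1/(2\mathfrak r)}$.

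Multiplying this last expression by the scalar $(2\pi)^{-n/2}(\det(\varepsilon^2 I_n+D^2))^{-1/4}$ collected in the first step, the powers of $\varepsilon$ and of $\det(\varepsilon^2 I_n+D^2)$ assemble into exactly $\varepsilon^{-n/(2\mathfrak r)}$ and $(\det(\varepsilon^2 I_n+D^2))^{1/(2\mathfrak r)-1/4}$ of the claim, while the surviving numerical factor $(2\pi)^{n/(2\mathfrak r)-n/2}\,\mathfrak r^{-n/(2\mathfrak r)}$ is bounded by a constant depending only on $n$ (in fact by $1$), using $1\le\mathfrak r\le\infty$ and $\sup_{\mathfrak r\ge1}\mathfrak r^{1/\mathfrak r}<\infty$; this yields the announced $\lesssim_n$. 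The degenerate endpoint $\mathfrak r=\infty$ (i.e. $\mathfrak p=1$, $\mathfrak q=\infty$) I would treat separately, using $\|e^{-\tfrac{\varepsilon}{2}r}\|_{L^\infty}=1$ since $r\ge0$, consistently with the exponent $\tfrac1{2\mathfrak r}-\tfrac14=-\tfrac14$. As for the difficulty: essentially all of the analytic content is already carried by Proposition~\ref{prop:miraculous}, so the corollary is pure bookkeeping; the only points that deserve some care are the legitimacy of the substitution $z=(I_n-DN)x$ and the verification that the leftover constant is genuinely dimensional.
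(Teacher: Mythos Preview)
Your approach is identical to the paper's: invoke Proposition~\ref{prop:miraculous}, pass to the $L^{\mathfrak q}$-norm, perform the substitution $z=(I_n-DN)x$, apply Young's inequality with exponent $\mathfrak r$, and evaluate $\|e^{-\varepsilon r/2}\|_{L^{\mathfrak r}}$ via the Gaussian integral (Proposition~\ref{prop:TFGauss}). All of this is correct and matches the paper line for line, including the separate treatment of $\mathfrak r=\infty$.

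The one genuine gap is your handling of the Jacobian factor $|\det(I_n-DN)|^{-1/\mathfrak q}$. You wave it away as ``bounded when $D$ is small, which is the only regime in which the corollary will be invoked,'' but the corollary is stated for \emph{arbitrary} $d$, and with your argument the implicit constant would depend on $D$ and not only on $n$ as claimed. The paper closes this by showing $|\det(I_n-DN)|\ge 1$ outright: since $D$ is symmetric and $N$ skew-symmetric, one writes $DN=\sqrt{D}\,(\sqrt{D}N\sqrt{D})\,\sqrt{D}^{-1}$, so $DN$ is conjugate to the skew-symmetric matrix $\sqrt{D}N\sqrt{D}$ and therefore has purely imaginary spectrum; the eigenvalues of $I_n-DN$ are then of the form $1+i\lambda$ with $\lambda\in\mathbb R$, each of modulus at least $1$, and the Jacobian factor can simply be dropped. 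With that observation in place your argument is complete and coincides with the paper's.
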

\begin{proof} By Proposition \ref{prop:miraculous}, we have
$$
\big\| (e^{-\frac{\varepsilon}2 |\xi - Nx|^2 })^w e^{\frac{i}2  d(\nabla)  }u \big\|_{L^\mathfrak{q}} \leq \frac{(2\pi)^{-n/2}}{\sqrt[4]{\mathrm{det} \, (\varepsilon^2 I_{n} + D^2 )}}\big\| (e^{- \varepsilon \frac{r}2}  \ast |u| )\circ (I_n - DN) \big\|_{L^\mathfrak{q}}.
$$
Hence a change of variable provides the estimate
$$
\big\| (e^{-\frac{\varepsilon}2 |\xi - Nx|^2 })^w e^{\frac{i}2  d(\nabla)  }u\big\|_{L^\mathfrak{q}} \leq \frac{(2\pi)^{-n/2}}{\sqrt[4]{\mathrm{det} \, (\varepsilon^2 I_{n} + D^2 )} \, |\mathrm{det}(I_n - DN)|^{\frac1{\mathfrak{q}}} } \big\| e^{- \varepsilon \frac{r}2}  \ast |u|  \big\|_{L^\mathfrak{q}}.
$$
Then we note that $|\mathrm{det}(I_n - DN)|\geq 1$. Indeed, since $D$ is invertible and symmetric, $DN$ is conjugated to a skew-symmetric matrix\footnote{because $DN = \sqrt{D} \sqrt{D} N \sqrt{D} \sqrt{D}^{-1}$.}, and so its spectrum is purely imaginary. As a consequence, $|\mathrm{det}(I_n - DN)|$ is a product of factors of the form $|1+i\lambda|$ with $\lambda \in \mathbb{R}$ which are all larger than $1$.

Therefore, applying Young's convolution inequality, we get
$$
\big\| (e^{-\frac{\varepsilon}2 |\xi - Nx|^2 })^w e^{\frac{i}2  d(\nabla)  }u \big\|_{L^\mathfrak{q}} \leq \frac{(2\pi)^{-n/2}}{\sqrt[4]{\mathrm{det} \, (\varepsilon^2 I_{n} + D^2 )}  } \| e^{- \varepsilon \frac{r}2}    \|_{L^\mathfrak{r}} \| u \|_{L^\mathfrak{p}},
$$
where $\mathfrak{r} = (1- \mathfrak{p}^{-1} + \mathfrak{q}^{-1} )^{-1} \in [1,+\infty]$ is well-defined thanks to the assumption that $1\leq \mathfrak{p} \leq \mathfrak{q} \leq \infty$. Finally, recalling that $( \varepsilon^2 I_{n} + D^2)^{-1}$ is the matrix of $r$, we deduce from Proposition \ref{prop:TFGauss} that (if $\mathfrak{r} \neq +\infty$)\footnote{If $\mathfrak{r} = +\infty$, we just use that $\| e^{- \varepsilon \frac{r}2}    \|_{L^\mathfrak{r}} = 1$.},
$$
\| e^{- \varepsilon \frac{r}2}    \|_{L^\mathfrak{r}} =\bigg( \int_{\mathbb{R}^n} e^{- \varepsilon \mathfrak{r} \frac{r(x)}2} \, \mathrm{d}x  \bigg)^{\frac1{\mathfrak{r}}} = \Big(  \Big(\frac{2\pi}{\varepsilon \mathfrak{r}}\Big)^{n/2} \sqrt{\mathrm{det} \, (\varepsilon^2 I_{n} + D^2 )}  \Big)^{\frac1{\mathfrak{r}}},
$$
which provides the estimate \eqref{eq:recip_Guardia} we aimed at proving. 
\end{proof}

\section{Local smoothing effects}\label{sec:localsmooth}

This section is devoting to the proof of the main result of this paper, namely Theorem \ref{thm:maindir}.  This result deals with the local smoothing effects and the gains of integrability of semigroups generated by accretive quadratic operators whose singular spaces are assumed to be included in the graph of a real $n \times n$ matrix. To that end, we will use the decomposition \eqref{eq:the_dec} established in Theorem \ref{thm:the_dec}, and also the gain of integrability results established in Section \ref{sec:Gauss}.

First of all, we focus on the case of twisted diffusions:

\begin{proposition}\label{prop:smoothtwist} Let $N$ be a real $n\times n$ skew-symmetric matrix. Then, for all $1\le\mathfrak q\le\infty$, there exists a positive constant $C>1$ such that for all $m\geq0$, $\varepsilon>0$ and $u\in\mathscr S(\mathbb R^n)$,
\begin{equation}\label{21102021E1}
	\big\Vert\langle Nx\rangle^{-m}\mathrm d^m((e^{-\frac{\varepsilon}2\vert\xi - Nx\vert^2})^wu)\big\Vert_{L^{\mathfrak q}}\le C^{1+m}\ \varepsilon^{-\frac m2}\ \sqrt{ m!}\ \Vert u\Vert_{L^{\mathfrak q}}.
\end{equation}
\end{proposition}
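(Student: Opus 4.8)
The plan is to represent $v:=(e^{-\frac{\varepsilon}2|\xi-Nx|^2})^wu$ through the explicit Gaussian kernel of Corollary~\ref{cor:Ng}, differentiate under the integral sign, and control the resulting moments by Young's convolution inequality. By Corollary~\ref{cor:Ng},
$$v(x)=\int_{\mathbb R^n}K_\varepsilon(x,y)u(y)\,\mathrm dy,\qquad K_\varepsilon(x,y)=(2\pi\varepsilon)^{-n/2}e^{\phi(x,y)},\quad \phi(x,y)=-\tfrac1{2\varepsilon}|x-y|^2+i(x-y)\cdot Nx .$$
Since the norm of the symmetric $m$-tensor $\mathrm d^mv(x)$ equals, up to the factor $m^m/m!\le e^m$ furnished by the polarization identity, the supremum over unit vectors $h$ of $|(h\cdot\nabla_x)^mv(x)|$, it suffices to bound $\langle Nx\rangle^{-m}|(h\cdot\nabla_x)^mv(x)|$ in $L^{\mathfrak q}$ uniformly in $|h|=1$. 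The structural point is that $\phi(\cdot,y)$ is a quadratic polynomial in $x$, so all its $x$-derivatives of order $\ge3$ vanish and the Fa\`a di Bruno (Hermite) formula yields
$$(h\cdot\nabla_x)^me^{\phi}=e^{\phi}\sum_{j=0}^{\lfloor m/2\rfloor}\frac{m!}{2^jj!(m-2j)!}\,(g'')^j(g')^{m-2j},\qquad g'=-\tfrac1\varepsilon h\cdot(x-y)+ih\cdot Ny,\quad g''=-\tfrac1\varepsilon .$$

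The decisive manipulation is the splitting $Ny=Nx+N(y-x)$ in $g'$, which gives $|g'|\le|Nx|+(\tfrac1\varepsilon+|N|)|x-y|$: the only ``dangerous'' term $|Ny|$, which is not controlled by the weight $\langle Nx\rangle$, is thereby separated into a piece absorbed by $\langle Nx\rangle^{-m}$ and a piece of size $O(|x-y|)$ absorbed by the Gaussian $e^{\Reelle\phi}=e^{-|x-y|^2/(2\varepsilon)}$. Expanding $|g'|^{m-2j}$ binomially and using $\langle Nx\rangle^{-m}|Nx|^{m-2j-l}\le\langle Nx\rangle^{-2j-l}\le1$, one obtains
$$\langle Nx\rangle^{-m}|(h\cdot\nabla_x)^mK_\varepsilon(x,y)|\le \rho_\varepsilon(x-y)\sum_{j=0}^{\lfloor m/2\rfloor}\sum_{l=0}^{m-2j}\frac{m!}{2^jj!(m-2j)!}\binom{m-2j}{l}\varepsilon^{-j}\Lambda^l|x-y|^l ,$$
with $\rho_\varepsilon(z)=(2\pi\varepsilon)^{-n/2}e^{-|z|^2/(2\varepsilon)}$ and $\Lambda=\tfrac1\varepsilon+|N|$. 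Convolving with $|u|$, applying Young's inequality, and using $\int_{\mathbb R^n}|z|^l\rho_\varepsilon(z)\,\mathrm dz=\varepsilon^{l/2}\mu_l$ with $\mu_l=\mathbb E|Z|^l$ for $Z$ a standard Gaussian on $\mathbb R^n$, leaves
$$\big\|\langle Nx\rangle^{-m}(h\cdot\nabla_x)^mv\big\|_{L^{\mathfrak q}}\le\|u\|_{L^{\mathfrak q}}\sum_{j=0}^{\lfloor m/2\rfloor}\sum_{l=0}^{m-2j}\frac{m!}{2^jj!(m-2j)!}\binom{m-2j}{l}\varepsilon^{-j}\Lambda^l\varepsilon^{l/2}\mu_l .$$

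It then remains to show that this double sum is $\le C^{1+m}\varepsilon^{-m/2}\sqrt{m!}$. For the powers of $\varepsilon$ one invokes the constraint $2j+l\le m$: since $\Lambda^l\varepsilon^{l/2}\le(1+|N|)^l\varepsilon^{-l/2}$ and $j+\tfrac l2\le\tfrac m2$, the powers collapse to $\varepsilon^{-m/2}$. For the combinatorial factor, the essential inputs are the sub-factorial moment bound $\mu_l\le C_n^l\sqrt{l!}\le C_n^l\sqrt{(m-2j)!}$, which together with the binomial theorem collapses the $l$-sum to $\le C^m\sqrt{(m-2j)!}$, and then the elementary inequalities $m!/\sqrt{(m-2j)!}\le\sqrt{m!}\,m^j$ and $\sum_{j\ge0}m^j/(2^jj!)\le e^{m/2}$; chaining these gives $\le C^m\sqrt{m!}$, and the polarization factor $e^m$ is absorbed into $C$. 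The case $m=0$ is simply the contraction $\|v\|_{L^{\mathfrak q}}\le\|\rho_\varepsilon\|_{L^1}\|u\|_{L^{\mathfrak q}}=\|u\|_{L^{\mathfrak q}}$ coming from Corollary~\ref{cor:Ng} and Young. I expect the main obstacle to be precisely this last bookkeeping: a crude estimate of the $(j,l)$-sum produces a spurious $m!$ rather than $\sqrt{m!}$, and recovering the square root relies on keeping the $\sqrt{l!}$-decay of the Gaussian moments against the binomial weights and on exploiting $2j+l\le m$, rather than bounding the Hermite sum $\sum_j m!/(2^jj!(m-2j)!)$ by itself.
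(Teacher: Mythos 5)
Your argument is correct and rests on the same mechanism as the paper's proof of Proposition \ref{prop:smoothtwist} — represent the operator by the kernel of Corollary \ref{cor:Ng}, split the factor $Ny$ coming from the phase into $Nx$ (absorbed by the weight $\langle Nx\rangle^{-m}$) plus $N(y-x)$ (absorbed by the Gaussian), and finish with Young's inequality — but the combinatorial implementation is genuinely different. The paper differentiates the convolution form $\int e^{-|y|^2/2\varepsilon+iy\cdot Nx}u(x-y)\,\mathrm dy$, integrates by parts to throw the derivatives of $u$ back onto the Gaussian factor, and then quotes Gelfand--Shilov bounds for $y^\beta\partial_y^\gamma e^{-|y|^2}$ from \cite{NR10} to produce the $\sqrt{\alpha!}$ growth, finally summing over multi-indices at the cost of an $n^m$. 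You instead differentiate the kernel directly via the Hermite/Fa\`a di Bruno expansion of $(h\cdot\nabla)^m e^{g}$ for $g$ quadratic in $x$, estimate the Gaussian moments $\mu_l\lesssim C^l\sqrt{l!}$ by hand, and pass from directional derivatives to the full differential by the polarization inequality $\Vert T\Vert\le (m^m/m!)\sup_{|h|=1}|T(h,\dots,h)|$, paying only $e^m$. Your bookkeeping ($2j+l\le m$ for the $\varepsilon$-powers; $m!/\sqrt{(m-2j)!}\le\sqrt{m!}\,m^j$ and $\sum_j m^j/(2^jj!)\le e^{m/2}$) is correct and arguably more transparent than the appeal to \cite{NR10}. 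One phrasing to tighten: it is not enough to bound $\Vert\langle Nx\rangle^{-m}(h\cdot\nabla)^m v\Vert_{L^{\mathfrak q}}$ ``uniformly in $h$''; you need the pointwise supremum over $h$ inside the $L^{\mathfrak q}$ norm, but your dominating convolution kernel is independent of $h$, so your chain of inequalities does deliver exactly that.

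One caveat, which you share with the paper: the inequality $\Lambda^l\varepsilon^{l/2}\le(1+|N|)^l\varepsilon^{-l/2}$ is false for $\varepsilon>1$ (and likewise the paper's powers $(2\varepsilon)^{(2|\alpha'|_1+|\alpha''|_1-|\alpha|_1)/2}$ only collapse to $(2\varepsilon)^{-|\alpha|_1/2}$ when $2\varepsilon\le1$). In fact, for $N\ne0$ the estimate as stated cannot hold for all $\varepsilon>0$: already for $m=1$ the phase derivative $ih\cdot Ny$ contributes, near $x=0$, a term of size $\sqrt\varepsilon\,\Vert u\Vert_{L^\infty}$, which is incompatible with $\varepsilon^{-1/2}$ as $\varepsilon\to\infty$. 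This is harmless for the application (Corollary \ref{cor:localsmooth} uses $\varepsilon_t=2\gamma t^{2k_0+1}$ with $t<t_0$), but you should state explicitly that your proof, like the paper's, establishes the bound for $\varepsilon\le\varepsilon_0$ with a fixed $\varepsilon_0$.
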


\begin{proof} Considering some $1\le\mathfrak q\le\infty$ fixed, let us begin by establishing the estimate \eqref{21102021E1} in terms of partial derivatives, instead of differentials. More precisely, let us prove that there exists a positive constant $C>0$ such that for all $\alpha\in\mathbb N^n$, $\varepsilon>0$ and $u\in\mathscr S(\mathbb R^n)$,
\begin{equation}\label{21102021E2}
	\big\Vert\langle Nx\rangle^{-\vert\alpha\vert_1}\partial^{\alpha}_x((e^{-\frac{\varepsilon}2\vert\xi - Nx\vert^2})^wu)\big\Vert_{L^{\mathfrak q}}\le C^{1+\vert\alpha\vert_1}\ \varepsilon^{-\frac{\vert\alpha\vert_1}2}\ \sqrt{\alpha!}\ \Vert u\Vert_{L^{\mathfrak q}},
\end{equation}
where $\vert\alpha\vert_1 = \alpha_1+\cdots+\alpha_n$, $\partial^{\alpha}_x = \partial^{\alpha_1}_{x_1}\cdots\partial^{\alpha_n}_{x_n}$, $x_1,\cdots,x_n$ denoting the coordinates in the canonical basis of $\mathbb R^n$ as usual, and $\alpha! = \alpha_1!\cdots\alpha_n!$. Let $\varepsilon>0$ and $u\in\mathscr S(\mathbb R^n)$ be fixed. In order to alleviate the writing, we set $f = (e^{-\frac{\varepsilon}2\vert\xi - Nx\vert^2})^wu$. Also fixing $x\in\mathbb R^n$, we first deduce from Corollary \ref{cor:Ng} that
$$f(x) = (2\pi\varepsilon)^{-n/2}\int_{\mathbb R^n}e^{-\frac1{2\varepsilon}\vert x-y\vert^2+i(x-y)\cdot Nx}u(y)\,\mathrm dy
= (2\pi\varepsilon)^{-n/2}\int_{\mathbb R^n}e^{-\frac1{2\varepsilon}\vert y\vert^2+iy\cdot Nx}u(x-y)\,\mathrm dy.$$
Since the function $u$ belongs to the Schwartz space $\mathscr S(\mathbb R^n)$ and that the integrand of the above integral is smooth, we deduce from Leibniz' formula and an integration by parts that
\begin{align*}
	\partial^{\alpha}_xf(x)
	& = (2\pi\varepsilon)^{-n/2}\int_{\mathbb R^n}\sum_{\alpha'\le\alpha}\binom{\alpha}{\alpha'}\partial^{\alpha'}_x(e^{-\frac1{2\varepsilon}\vert y\vert^2+iy\cdot Nx})\partial^{\alpha-\alpha'}_x(u(x-y))\,\mathrm dy \\[5pt]
	& = (2\pi\varepsilon)^{-n/2}\int_{\mathbb R^n}\sum_{\alpha'\le\alpha}\binom{\alpha}{\alpha'}(-iNy)^{\alpha'}e^{-\frac1{2\varepsilon}\vert y\vert^2+iy\cdot Nx}\partial^{\alpha-\alpha'}_y(-u(x-y))\,\mathrm dy \\[5pt]
	& = (2\pi\varepsilon)^{-n/2}\int_{\mathbb R^n}\sum_{\alpha'\le\alpha}\binom{\alpha}{\alpha'}\partial^{\alpha-\alpha'}_y\big((-iNy)^{\alpha'}e^{-\frac1{2\varepsilon}\vert y\vert^2+iy\cdot Nx}\big)u(x-y)\,\mathrm dy,
\end{align*}
where we used the fact that the matrix $N$ is skew-symmetric. Another use of Leibniz' formula implies that the above derivatives are given by
\begin{align*}
	\partial^{\alpha-\alpha'}_y\big((-iNy)^{\alpha'}(e^{-\frac1{2\varepsilon}\vert y\vert^2+iy\cdot Nx})\big) 
	& = \sum_{\alpha''\le\alpha-\alpha'}\binom{\alpha-\alpha'}{\alpha''}\partial^{\alpha-\alpha'-\alpha''}_y\big((-iNy)^{\alpha'}e^{-\frac1{2\varepsilon}\vert y\vert^2}\big)\partial^{\alpha''}_y(e^{iy\cdot Nx}) \\[5pt]
	& = \sum_{\alpha''\le\alpha-\alpha'}\binom{\alpha-\alpha'}{\alpha''}\partial^{\alpha-\alpha'-\alpha''}_y\big((-iNy)^{\alpha'}e^{-\frac1{2\varepsilon}\vert y\vert^2}\big)(iNx)^{\alpha''}(e^{iy\cdot Nx}).
\end{align*}
Gathering these two equalities, we therefore deduce that
\begin{multline*}
	\big\vert\langle Nx\rangle^{-\vert\alpha\vert_1}\partial^{\alpha}_xf(x)\big\vert
	\le (2\pi\varepsilon)^{-n/2}\sum_{\alpha'\le\alpha}\sum_{\alpha''\le\alpha-\alpha'}\binom{\alpha}{\alpha'}\binom{\alpha-\alpha'}{\alpha''} \\[5pt]
	\times\int_{\mathbb R^n}\big\vert\partial^{\alpha-\alpha'-\alpha''}_y\big((Ny)^{\alpha'}e^{-\frac1{2\varepsilon}\vert y\vert^2}\big)\big\vert\langle Nx\rangle^{-\vert\alpha\vert_1}\vert(Nx)^{\alpha''}\vert\vert u(x-y)\vert\, \mathrm dy.
\end{multline*}
Using the fact that $\vert\alpha''\vert_1\le\vert\alpha\vert_1$ and Young's convolution inequality, we obtain the following estimate
\begin{align}\label{21102021E3}
	&\ \big\Vert\langle Nx\rangle^{-\vert\alpha\vert_1}\partial^{\alpha}_xf\big\Vert_{L^{\mathfrak q}} \\[5pt]
	\le &\ (2\pi\varepsilon)^{-n/2}\sum_{\alpha'\le\alpha}\sum_{\alpha''\le\alpha-\alpha'}\binom{\alpha}{\alpha'}\binom{\alpha-\alpha'}{\alpha''}
	\big\Vert\big\vert\partial^{\alpha-\alpha'-\alpha''}_y\big((Ny)^{\alpha'}e^{-\frac1{2\varepsilon}\vert y\vert^2}\big)\big\vert\ast\vert u\vert\big\Vert_{L^{\mathfrak q}} \nonumber \\[5pt]
	\le &\ (2\pi\varepsilon)^{-n/2}\sum_{\alpha'\le\alpha}\sum_{\alpha''\le\alpha-\alpha'}\binom{\alpha}{\alpha'}\binom{\alpha-\alpha'}{\alpha''}
	\big\Vert\partial^{\alpha-\alpha'-\alpha''}_y\big((Ny)^{\alpha'}e^{-\frac1{2\varepsilon}\vert y\vert^2}\big)\big\Vert_{L^1}\Vert u\Vert_{L^{\mathfrak q}}. \nonumber
\end{align}
We now aim at controlling the above $L^1$ norm. First of all, by an homogeneity argument and a change of variable, we have
$$\big\Vert\partial^{\alpha-\alpha'-\alpha''}_y\big((Ny)^{\alpha'}e^{-\frac1{2\varepsilon}\vert y\vert^2}\big)\big\Vert_{L^1} = \frac{(2\varepsilon)^{\vert\alpha'\vert_1/2+n/2}}{(2\varepsilon)^{\vert\alpha-\alpha'-\alpha''\vert_1/2}}\big\Vert\partial^{\alpha-\alpha'-\alpha''}_y\big((Ny)^{\alpha'}e^{-\vert y\vert^2}\big)\big\Vert_{L^1}.$$
Then, we will use the fact that the standard Gaussian function enjoys Gelfand-Shilov regularity. More precisely, it follows from Example 6.3.1 and Theorem 6.1.6 in \cite{NR10} that there exists a positive constant $C_1>1$ such that for all $\beta,\gamma\in\mathbb N^n$,
$$\big\Vert y^{\beta}\partial^{\gamma}_y(e^{-\vert y\vert^2})\big\Vert_{L^{\infty}}\le C_1^{1+\vert\beta\vert_1+\vert\gamma\vert_1}\ \sqrt{\beta!}\ \sqrt{\gamma!}.$$
By using Leibniz's formula, we get that there exists another positive constant $C_2>0$ such that for all $m\geq0$ and $\beta,\gamma\in\mathbb N^n$,
$$\big\Vert\langle y\rangle^m\partial^{\beta}_y((Ny)^{\gamma}e^{-\vert y\vert^2})\big\Vert_{L^{\infty}}\le C_2^{1+m+\vert\beta\vert_1+\vert\gamma\vert_1}\ \sqrt{m!}\ \sqrt{\beta!}\ \sqrt{\gamma!}.$$
As a consequence, we obtain that for all $\alpha,\alpha',\alpha''\in\mathbb N^n$ satisfying $\alpha'\le\alpha$ and $\alpha''\le \alpha-\alpha'$,
\begin{align*}
	\big\Vert\partial^{\alpha-\alpha'-\alpha''}_y\big((Ny)^{\alpha'}e^{-\vert y\vert^2}\big)\big\Vert_{L^1}
	& = \big\Vert\langle y\rangle^{-n-1}\langle y\rangle^{n+1}\partial^{\alpha-\alpha'-\alpha''}_y\big((Ny)^{\alpha'}e^{-\vert y\vert^2}\big)\big\Vert_{L^1} \\[5pt]
	& \le \big\Vert\langle y\rangle^{-n-1}\big\Vert_{L^1}\big\Vert\langle y\rangle^{n+1}\partial^{\alpha-\alpha'-\alpha''}_y\big((Ny)^{\alpha'}e^{-\vert y\vert^2}\big)\big\Vert_{L^{\infty}} \\[5pt] 
	& \le \big\Vert\langle y\rangle^{-n-1}\big\Vert_{L^1}C_2^{2+n+\vert\alpha-\alpha'-\alpha''\vert_1}\ \sqrt{(n+1)!}\ \sqrt{(\alpha-\alpha'-\alpha'')!}\ \sqrt{\alpha'!} \\[5pt]
	& \le \big\Vert\langle y\rangle^{-n-1}\big\Vert_{L^1}C_2^{2+n+\vert\alpha-\alpha'-\alpha''\vert_1}\ \sqrt{(n+1)!}\ \sqrt{\alpha!}.
\end{align*}
Plugging this estimate in \eqref{21102021E3}, and using the fact that $\sum_{\alpha'\le\alpha}\binom{\alpha}{\alpha'} = 2^{\vert\alpha\vert_1},$
we deduce that the estimate \eqref{21102021E2} holds. Let us now derive the estimate \eqref{21102021E1} from \eqref{21102021E2}. First, notice that the norm we aim at bounding is given by
$$\big\Vert\langle Nx\rangle^{-m}\mathrm d^mf\big\Vert_{L^{\mathfrak q}}^{\mathfrak q} = \int_{\mathbb R^n}\Vert\langle Nx\rangle^{-m}\mathrm d^mf(x)\Vert^{\mathfrak q}_{\mathcal L^m}\,\mathrm dx,$$
where $\mathcal L^m$ denotes the space of continuous $m$-linear forms on $\mathbb R^n$. Recalling that for all $m\geq0$, $x\in\mathbb R^n$ and $h_1,\ldots,h_m\in\mathbb R^n$, we have
\begin{equation}\label{21102021E6}
	\mathrm d^mf(x)\cdot(h_1,\cdots,h_m) = \sum_{1\le i_1,\cdots,i_m\le n}\partial^m_{x_{i_1},\cdots,x_{i_m}}f(x)(h_1)_{i_1}\cdots(h_m)_{i_m},
\end{equation}
we get that
$$\Vert\mathrm d^mf(x)\Vert_{\mathcal L^m}\le\sum_{1\le i_1,\cdots,i_m\le n}\big\vert\partial^m_{x_{i_1},\cdots,x_{i_m}}f(x)\big\vert.$$
We therefore deduce from \eqref{21102021E2} that for all $m\geq0$,
$$
\big\Vert\langle Nx\rangle^{-m}\mathrm d^mf\big\Vert_{L^{\mathfrak q}}  \le \sum_{1\le i_1,\cdots,i_m\le n}\big\Vert\langle Nx\rangle^{-m}\partial^m_{x_{i_1},\cdots,x_{i_m}}f\big\Vert_{L^{\mathfrak q}} 
	 \le \frac{n^mC^{1+m}}{\varepsilon^{\frac m2}}\ \sqrt{m!}\ \Vert u\Vert_{L^{\mathfrak q}}.
$$

\end{proof}

By using Proposition \ref{prop:smoothtwist} and the key decomposition \eqref{eq:the_dec}, we can now tackle the proof of Theorem \ref{thm:maindir}, which is a consequence of the following

\begin{corollary}\label{cor:localsmooth} Let $q:\mathbb R^{2n}\rightarrow\mathbb C$ be a complex-valued quadratic form with a nonnegative real part. Let $S$ be its singular space (defined by \eqref{def:S}) and $k_0$ be its global index (defined by \eqref{def:k_0}). Assume that $S$ is included in the graph of a real $n \times n$ matrix $G$, i.e. $S\subset\{(x,Gx) \ | \ x\in \mathbb{R}^n \}$. Then, for all $1\le\mathfrak p\le\mathfrak q\le\infty$, there exist $C>1$ and $t_0>0$ such that for all $m\geq0$, $0<t<t_0$ and $u\in\mathscr S(\mathbb R^n)$,
\begin{equation}\label{21102021E4}
	\big\Vert(\langle Gx\rangle+\langle \transp Gx\rangle)^{-m}\mathrm d^m(e^{-tq^w}u)\big\Vert_{L^{\mathfrak q}}\le\frac{C^{1+m}}{t^{(k_0+\frac12)m+c_{\mathfrak p,\mathfrak q}}}\ \sqrt{m!}\ \Vert u\Vert_{L^{\mathfrak p}},
\end{equation}
where the positive constant $c_{\mathfrak p,\mathfrak q}>0$ is defined by \eqref{21102021E7}.
\end{corollary}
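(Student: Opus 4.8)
The plan is to run the factorisation \eqref{eq:the_dec} of Theorem \ref{thm:the_dec} through the $L^{\mathfrak p}$ estimates of Section \ref{sec:Gauss} and of Proposition \ref{prop:smoothtwist}, treating the six factors of \eqref{eq:the_dec} one at a time, from right to left. Fix $1\le\mathfrak p\le\mathfrak q\le\infty$ and $u\in\mathscr S(\mathbb R^n)$, and note first that each of the operators $e^{\frac i2(tW_t-G)x\cdot x}$, $e^{tM_tx\cdot\nabla}$, $e^{itD_t\nabla\cdot\nabla}$, $(e^{-\gamma t^\alpha|\xi-Nx|^2})^w$ and $e^{-tp_t^w}$ maps $\mathscr S(\mathbb R^n)$ into itself (for the last one this is \cite[Theorem 4.2]{Hor95}), so every intermediate function produced below stays Schwartz and the cited results apply verbatim. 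Set $v=e^{tM_tx\cdot\nabla}e^{\frac i2(tW_t-G)x\cdot x}u$: the chirp is an $L^{\mathfrak p}$-isometry and $e^{tM_tx\cdot\nabla}$ acts by the linear change of variables $x\mapsto e^{tM_t}x$, hence $\Vert v\Vert_{L^{\mathfrak p}}\le C\Vert u\Vert_{L^{\mathfrak p}}$ with $C$ uniform for $t\in[0,t_0)$.

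\textbf{The $L^{\mathfrak p}\to L^{\mathfrak q}$ step.} Set $w=(e^{-\gamma t^\alpha|\xi-Nx|^2})^we^{itD_t\nabla\cdot\nabla}v$. Since $e^{itD_t\nabla\cdot\nabla}=e^{\frac i2 d(\nabla)}$ with $d$ the quadratic form of matrix $2tD_t$, Corollary \ref{cor:twisted_vs_schro} with $\varepsilon=2\gamma t^\alpha$ gives
$$\Vert w\Vert_{L^{\mathfrak q}}\lesssim_n(2\gamma t^\alpha)^{-\frac n{2\mathfrak r}}\big(\det(4\gamma^2t^{2\alpha}I_n+4t^2D_t^2)\big)^{\frac1{2\mathfrak r}-\frac14}\Vert v\Vert_{L^{\mathfrak p}}.$$
Because $\alpha=2k_0+1\ge1$ and $D_t$ is bounded on $[0,t_0)$, the eigenvalues of $4\gamma^2t^{2\alpha}I_n+4t^2D_t^2$ lie between $ct^{2\alpha}$ and $Ct^2$ once $t\le1$; inserting the upper bound of the determinant when $\mathfrak r\le2$ (so that $\frac1{2\mathfrak r}-\frac14\ge0$) and the lower bound when $\mathfrak r>2$, a one-line computation of the resulting power of $t$ recovers in both cases exactly the exponent $-c_{\mathfrak p,\mathfrak q}$ of \eqref{21102021E7}; hence $\Vert w\Vert_{L^{\mathfrak q}}\lesssim t^{-c_{\mathfrak p,\mathfrak q}}\Vert v\Vert_{L^{\mathfrak p}}$. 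Then Theorem \ref{thm:fifou} gives $\Vert z\Vert_{L^{\mathfrak q}}\le\Vert w\Vert_{L^{\mathfrak q}}$ for $z:=e^{-tp_t^w}w$.

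\textbf{The smoothing block.} It remains to treat the leftmost block $e^{\frac i2 Gx\cdot x}(e^{-\gamma t^\alpha|\xi-Nx|^2})^w$ applied to $z$, where the differentiability is created. Since $Gx\cdot x=G^{(sym)}x\cdot x$, this equals $e^{\frac i2 G^{(sym)}x\cdot x}\Phi$ with $\Phi:=(e^{-\gamma t^\alpha|\xi-Nx|^2})^wz\in\mathscr S(\mathbb R^n)$, so by Leibniz' formula $\Vert\mathrm d^m(e^{-tq^w}u)(x)\Vert\le|c_t|\sum_{j=0}^m\binom mj\,\Vert\mathrm d^j(e^{\frac i2 G^{(sym)}x\cdot x})(x)\Vert\,\Vert\mathrm d^{m-j}\Phi(x)\Vert$. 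For $\mathrm d^{m-j}\Phi$, Proposition \ref{prop:smoothtwist} with $\varepsilon=2\gamma t^\alpha$ gives $\big\Vert\langle Nx\rangle^{-(m-j)}\mathrm d^{m-j}\Phi\big\Vert_{L^{\mathfrak q}}\le C^{1+m-j}t^{-(k_0+\frac12)(m-j)}\sqrt{(m-j)!}\,\Vert z\Vert_{L^{\mathfrak q}}$, using $(2\gamma t^\alpha)^{-1/2}=(2\gamma)^{-1/2}t^{-(k_0+\frac12)}$. For $\mathrm d^j(e^{\frac i2 G^{(sym)}x\cdot x})$, the Fa\`a di Bruno formula (only $\nabla$ and $\nabla^2$ of the quadratic phase occur) yields the pointwise bound $\Vert\mathrm d^j(e^{\frac i2 G^{(sym)}x\cdot x})(x)\Vert\le C^j\langle G^{(sym)}x\rangle^j N_j$, where $N_j$ is the number of partitions of $\{1,\dots,j\}$ into blocks of size one or two, and $N_j\le C^j\sqrt{j!}$; so $\langle G^{(sym)}x\rangle^{-j}\Vert\mathrm d^j(e^{\frac i2 G^{(sym)}x\cdot x})(x)\Vert\le C^j\sqrt{j!}$ uniformly in $x$.

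\textbf{Assembly and the main obstacle.} To combine these I would use that $G^{(sym)}$ and $N$ are the symmetric and skew parts of $G$, whence $\langle G^{(sym)}x\rangle+\langle Nx\rangle\lesssim\langle Gx\rangle+\langle\transp{G}x\rangle$, and therefore by the weighted arithmetic--geometric mean inequality $(\langle Gx\rangle+\langle\transp{G}x\rangle)^{-m}\le C^m\langle G^{(sym)}x\rangle^{-j}\langle Nx\rangle^{-(m-j)}$ for every $0\le j\le m$. Multiplying the $j$-th term of the Leibniz sum by this weight, taking $L^{\mathfrak q}$ norms, using the two bounds above, and replacing $t^{-(k_0+\frac12)(m-j)}$ by $t^{-(k_0+\frac12)m}$ (valid since $t^\alpha\le1$), one is left with $C^{1+m}t^{-(k_0+\frac12)m}\sum_{j=0}^m\binom mj\sqrt{j!}\sqrt{(m-j)!}\,\Vert z\Vert_{L^{\mathfrak q}}$; since $\binom mj\sqrt{j!}\sqrt{(m-j)!}=m!/\sqrt{j!(m-j)!}=\sqrt{m!}\sqrt{\binom mj}$ and $\sum_{j=0}^m\sqrt{\binom mj}\le\sqrt{(m+1)2^m}$, this is $\le C^{1+m}t^{-(k_0+\frac12)m}\sqrt{m!}\,\Vert z\Vert_{L^{\mathfrak q}}$. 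Chaining with $\Vert z\Vert_{L^{\mathfrak q}}\le\Vert w\Vert_{L^{\mathfrak q}}\lesssim t^{-c_{\mathfrak p,\mathfrak q}}\Vert v\Vert_{L^{\mathfrak p}}\lesssim t^{-c_{\mathfrak p,\mathfrak q}}\Vert u\Vert_{L^{\mathfrak p}}$, and absorbing the bounded constant $c_t$, yields \eqref{21102021E4}. I expect the main obstacle to be precisely this last block: Proposition \ref{prop:smoothtwist} only controls $\langle Nx\rangle$-weighted derivatives, so one must push the Gaussian chirp through the $m$ differentiations and check that the polynomial weights it produces are absorbed exactly by $(\langle Gx\rangle+\langle\transp{G}x\rangle)^{-m}$ while the combinatorial loss stays $O(C^m\sqrt{m!})$; the other point requiring care is the bookkeeping in Corollary \ref{cor:twisted_vs_schro} that matches the power of $t$ with $c_{\mathfrak p,\mathfrak q}$ across the two ranges $\mathfrak r\le2$ and $\mathfrak r>2$.
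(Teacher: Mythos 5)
Your proposal is correct and follows essentially the same route as the paper: the decomposition of Theorem \ref{thm:the_dec} is fed through Corollary \ref{cor:twisted_vs_schro} (with the same determinant/eigenvalue computation producing $c_{\mathfrak p,\mathfrak q}$ in the two regimes of $\mathfrak r$), Theorem \ref{thm:fifou}, and Proposition \ref{prop:smoothtwist}, with a Leibniz expansion of the leftmost block $e^{\frac i2Gx\cdot x}(e^{-\gamma t^{\alpha}|\xi-Nx|^2})^w$ and the same splitting of the weight $(\langle Gx\rangle+\langle\transp Gx\rangle)^{-m}$ into $\langle G^{(sym)}x\rangle^{-j}\langle Nx\rangle^{-(m-j)}$. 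The only (harmless) variation is that you bound the chirp derivatives via Fa\`a di Bruno and the count of involutions, whereas the paper diagonalises the symmetric part of $G$ and uses one-dimensional Hermite polynomials to reach the same $C^{1+j}\sqrt{j!}$ bound.
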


\begin{proof} Let $1\le\mathfrak p\le\mathfrak q\le\infty$ be fixed. As in the proof of Proposition \ref{prop:smoothtwist}, we will derive a version of the estimate \eqref{21102021E4} stated with partial derivatives, instead of differentials. However, in opposite of what we have done in the proof of Proposition \ref{prop:smoothtwist}, we will not work with the partial derivates associated with the canonical basis of $\mathbb R^n$, but with other ones well adapted in the present situation, defined as follows. Let $M = (G+\transp G)/2$ be the symmetric part of the matrix $G$. Since the matrix $M$ is real and symmetric, it is diagonalisable in an orthonormal basis. Let $y_1,\cdots,y_n$ be the coordinates in this new basis and $\partial_{y_1},\cdots,\partial_{y_n}$ be the associated partial derivatives. Using the same notations as in the proof of Proposition \ref{prop:smoothtwist}, we will establish that there exist some positive constants $C>1$ and $t_0>0$ such that for all $\alpha\in\mathbb N^n$, $0<t<t_0$ and $u\in\mathscr S(\mathbb R^n)$,
\begin{equation}\label{21102021E5}
	\big\Vert(\langle Gx\rangle+\langle \transp Gx\rangle)^{-\vert\alpha\vert_1}\partial^{\alpha}_y(e^{-tq^w}u)\big\Vert_{L^{\mathfrak q}}\le\frac{C^{1+\vert\alpha\vert_1}}{t^{(k_0+\frac12)\vert\alpha\vert_1+c_{\mathfrak p, \mathfrak q}}}\ \sqrt{\alpha!}\ \Vert u\Vert_{L^{\mathfrak p}},
\end{equation}
where the constant $c_{\mathfrak p,\mathfrak q}$ is the same as in \eqref{21102021E7}. Then, we can deduce the estimate \eqref{21102021E4} by using \eqref{21102021E6} anew. As explained before stating Corollary \ref{cor:localsmooth}, the key point is to use the decomposition given by Theorem \ref{thm:the_dec}, which allows to write the evolution operators $e^{-tq^w}$ in the following way for all $0<t<t_0$, with $t_0>0$,
$$e^{-tq^w} = c_t \, e^{\frac{i}2Gx\cdot x}(e^{-\gamma t^{\alpha}|\xi - Nx|^2})^we^{-tp_t^w}(e^{-\gamma t^{\alpha}|\xi - Nx|^2})^we^{itD_t\nabla\cdot\nabla}e^{tM_tx\cdot\nabla}e^{\frac i2(tW_t - G)x\cdot x},$$
where $\gamma>0$ is a positive constant, $D_t, M_t$ and $W_t$ are real $n\times n$ matrices, $c_t>0$ is a positive constant, all depending smoothly on $t\in(-t_0,t_0)$, $\alpha = 2k_0+1$ and $N = (G-\transp{G})/2$ denotes the skew-symmetric part of $G$. Notice that we can assume the matrices $D_t$ to be symmetric, since we get that 
$${D_t\nabla\cdot\nabla} =\frac12( D_t+\transp{D_t}) \nabla \cdot \nabla .$$

Let $0<t<t_0$ and $u\in\mathscr S(\mathbb R^n)$ be fixed. We consider the time-dependent function $v_t$ defined by
\begin{equation}\label{21102021E9}
	v_t = e^{-tp_t^w}(e^{-\gamma t^{\alpha}|\xi - Nx|^2})^we^{itD_t\nabla\cdot\nabla}e^{tM_tx\cdot\nabla}e^{\frac i2(tW_t - G)x\cdot x}u.
\end{equation}
Notice that from Theorem 4.2 in \cite{Hor95}, $v_t$ is also a Schwartz function, since the evolution operators generated by accretive quadratic operators map $\mathscr S(\mathbb R^n)$ into itself. Setting $\varepsilon_t = 2\gamma t^{\alpha}$, we first get from Leibniz' formula that for all $\alpha\in\mathbb N^n$,
$$\partial^{\alpha}_y(e^{-tq^w}u) = c_t\sum_{\alpha'\le\alpha}\binom{\alpha}{\alpha'}\partial^{\alpha-\alpha'}_y(e^{\frac i2Gx\cdot x})\partial^{\alpha'}_y((e^{-\frac{\varepsilon_t}2 |\xi - Nx|^2 })^wv_t).$$
We therefore obtain that for all $\alpha\in\mathbb N^n$,
\begin{multline}\label{17112021E1}
	\big\Vert(\langle Gx\rangle+\langle \transp Gx\rangle)^{-\vert\alpha\vert_1}\partial^{\alpha}_y(e^{-tq^w}u)\big\Vert_{L^{\mathfrak q}}\le c_t\sum_{\alpha'\le\alpha}\binom{\alpha}{\alpha'}
	\big\Vert(\langle Gx\rangle + \langle\transp Gx\rangle)^{-\vert\alpha-\alpha'\vert_1}\partial^{\alpha-\alpha'}_y(e^{\frac i2Gx\cdot x})\big\Vert_{L^{\infty}} \\[5pt]
	\times\big\Vert(\langle Gx\rangle + \langle\transp Gx\rangle)^{-\vert\alpha'\vert_1}\partial^{\alpha'}_y((e^{-\frac{\varepsilon_t}2\vert\xi - Nx\vert^2})^wv_t)\big\Vert_{L^{\mathfrak q}}.
\end{multline}
The purpose is now to estimate the two terms appearing in the above sum. On the one hand, we need to bound the following $L^{\infty}$ norms for all $\beta\in\mathbb N^n$:
$$\big\Vert(\langle Gx\rangle + \langle\transp Gx\rangle)^{-\vert\beta\vert_1}\partial^{\beta}_y(e^{\frac i2Gx\cdot x})\big\Vert_{L^{\infty}}.$$
First, notice that in this term, the matrix $G$ can be replaced by $M$ (recall that this is its symmetric part) and the weight $\langle Gx\rangle + \langle\transp Gx\rangle$ can be replaced by $\langle Mx\rangle$, since we have that for all $x\in\mathbb R^n$,
$$\langle Mx\rangle\lesssim\langle Gx\rangle + \langle\transp Gx\rangle\quad\text{and}\quad e^{\frac i2Gx\cdot x} = e^{\frac i2Mx\cdot x}.$$
Moreover, since $y_1,\cdots,y_n$ denote the coordinates in a basis that diagonalises the matrix $M$, we have
\begin{equation}\label{10112021E1}
	\big\Vert\langle Mx\rangle^{-\vert\beta\vert_1}\partial^{\beta}_y(e^{\frac i2Mx\cdot x})\big\Vert_{L^{\infty}}
\le\prod_{j=1}^n\big\Vert\langle2\lambda_jy\rangle^{-\beta_j}\partial^{\beta_j}_y(e^{i\lambda_j\vert y\vert^2})\big\Vert_{L^{\infty}},
\end{equation}
where $\lambda_1,\ldots,\lambda_n\in\mathbb R$ are the eigenvalues of the matrix $M/2$. In the following, we consider the determination $\sqrt\cdot$ of the square root on $\mathbb C\setminus\mathbb R_-$. Let $\beta\geq0$ be a integer and $\lambda\in\mathbb R$ be one of the $\lambda_j$. We need to introduce the Hermite polynomial, see e.g. \cite[eq 5.5.3]{Sze75},
$$H_{\beta}(y) = (-1)^{\beta}e^{-y^2}\frac{\mathrm d^{\beta}}{\mathrm dy^{\beta}}(e^{-y^2}).$$
Let us recall that the following formula holds, see e.g. \cite[eq 5.5.4]{Sze75},
$$H_{\beta}(y) = \sum_{k=1}^{\lfloor\beta/2\rfloor}(-1)^k\frac{\beta!}{k!(\beta-2k)!}(2y)^{\beta-2k}.$$
As a consequence, we obtain that
\begin{align*}
	\big\Vert\langle2\lambda y\rangle^{-\beta}\partial^{\beta}_y(e^{i\lambda\vert y\vert^2})\big\Vert_{L^{\infty}}
	& = \big\Vert\langle2\lambda y\rangle^{-\beta}\sqrt{-i\lambda}^{\beta}(-1)^{\beta}H_{\beta}(\sqrt{-i\lambda}\,y)\big\Vert_{L^{\infty}} \\
	& \le \vert\sqrt{-i\lambda}\vert^{\beta}\sum_{k=1}^{\lfloor\beta/2\rfloor}\frac{2^{\beta-2k}\beta!}{k!(\beta-2k)!}\big\Vert\langle2\lambda y\rangle^{-\beta}(\sqrt{-i\lambda}\,y)^{\beta-2k}\big\Vert_{L^{\infty}} \\
	& \le \vert\sqrt{-i\lambda}\vert^{\beta}\,2^{\beta}\sum_{k=1}^{\lfloor\beta/2\rfloor}\frac{(2k)!}{k!}\binom{\beta}{2k}  \le \vert\sqrt{-i\lambda}\vert^{\beta}\,4^{\beta}\sum_{k=1}^{\lfloor\beta/2\rfloor}2^kk! 
\end{align*}
Moreover, since $\beta^{\beta}\le e^{\beta}\beta!$, we get that for all $1\le k\le\lfloor\beta/2\rfloor$, $k!\le k^k\le (\beta/2)^{\beta/2}\le(e/2)^{\beta/2}\sqrt{\beta!},$
which implies that
$$\big\Vert\langle\lambda y\rangle^{-\beta}\partial^{\beta}_y(e^{i\lambda\vert y\vert^2})\big\Vert_{L^{\infty}}
\le\vert\sqrt{-i\lambda}\vert^{\beta}\,8^{\beta}\,\bigg(\frac e2\bigg)^{\beta/2}\,\Big\lfloor\frac{\beta}2\Big\rfloor\,\sqrt{\beta!}.$$
As a consequence of this estimate and \eqref{10112021E1}, we deduce that there exists a positive constant $C_1>0$ such that for all $\beta\in\mathbb N^n$,
\begin{equation}\label{29102021E1}
	\big\Vert(\langle Gx\rangle + \langle\transp Gx\rangle)^{-\vert\beta\vert_1}\partial^{\beta}_y(e^{\frac i2Gx\cdot x})\big\Vert_{L^{\infty}}\lesssim\big\Vert\langle Mx\rangle^{-\vert\beta\vert_1}\partial^{\beta}_y(e^{\frac i2Mx\cdot x})\big\Vert_{L^{\infty}}\le C_1^{1+\vert\beta\vert_1}\,\sqrt{\beta!}.
\end{equation}
On the other hand, since $N$ is the skew-symmetric part of $G$, we also have
$$\forall x\in\mathbb R^n,\quad\langle Nx\rangle\lesssim\langle Gx\rangle + \langle\transp Gx\rangle,$$
and we deduce from Proposition \ref{prop:smoothtwist} that there exists another positive constant $C_2>0$ such that for all $\beta\in\mathbb N^n$ and $0<t<t_0$,
\begin{align}\label{29102021E2}
	\big\Vert(\langle Gx\rangle + \langle\transp Gx\rangle)^{-\vert\beta\vert_1}\partial^{\beta}_y((e^{-\frac{\varepsilon_t}2\vert\xi - Nx\vert^2})^wv_t)\big\Vert_{L^{\mathfrak q}}
	& \lesssim \big\Vert\langle Nx\rangle^{-\vert\beta\vert_1}\partial^{\beta}_y((e^{-\frac{\varepsilon_t}2\vert\xi - Nx\vert^2})^wv_t)\big\Vert_{L^{\mathfrak q}} \\[5pt]
	& \le \big\Vert\langle Nx\rangle^{-\vert\beta\vert_1}\mathrm d^{\beta}((e^{-\frac{\varepsilon_t}2\vert\xi - Nx\vert^2})^wv_t)\big\Vert_{L^{\mathfrak q}} \nonumber \\[5pt]
	& \le C_2^{1+\vert\beta\vert_1} \varepsilon_t^{-\frac{\vert\beta\vert}2} \, \sqrt{\beta!}\ \Vert v_t\Vert_{L^{\mathfrak q}}. \nonumber
\end{align}
Combining the estimates \eqref{29102021E1} and \eqref{29102021E2} with \eqref{17112021E1}, we obtain that
\begin{equation}\label{21102021E11}
	\big\Vert(\langle Gx\rangle + \langle\transp Gx\rangle)^{-\vert\alpha\vert_1}\partial^{\alpha}_x(e^{-tq^w}u)\big\Vert_{L^{\mathfrak q}}\le C_3^{1+\vert\alpha\vert_1} \varepsilon_t^{-\frac{\vert\alpha\vert_1}2} \ \sqrt{\alpha!}\ \Vert v_t\Vert_{L^{\mathfrak q}},
\end{equation}
where $C_3>0$ is a positive constant not depending on $\alpha\in\mathbb N^n$, $0<t<t_0$, neither $u\in\mathscr S(\mathbb R^n)$. 

Now, it only remains to estimate the $L^{\mathfrak q}$ norm of the function $v_t$. To that end, we need to study the action on the Lebesgue spaces of the operators involved in the definition \eqref{21102021E9} of the function $v_t$. First of all, notice that the operator $e^{(i/2)(tW_t-G)x \cdot x}$ is an isometry on $L^{\mathfrak p}$ so it plays no role, and $e^{tM_tx\cdot\nabla}$ is the flow of a transport equation, so it is invertible on $L^{\mathfrak p}$ and is close to the identity, and plays no role neither. Moreover, we get from Corollary \ref{cor:twisted_vs_schro} that for all $w\in\mathscr S(\mathbb R^n)$,
$$\big\| (e^{-\frac{\varepsilon}2 |\xi - Nx|^2 })^we^{itD_t\nabla\cdot\nabla}w\big\|_{L^\mathfrak q} \le C_4\,\varepsilon^{-\frac n{2\mathfrak r}}  \, (\mathrm{det} \, (\varepsilon^2 I_n + D^2 ))^{\frac1{2\mathfrak r} - \frac14}\ \| w\|_{L^\mathfrak p},$$
where $C_4>0$ is a positive constant only depending on the dimension, and
$\mathfrak r = (1- \mathfrak p^{-1} + \mathfrak q^{-1} )^{-1} .$
At last, it follows from Theorem \ref{thm:fifou} that for all $w\in\mathscr S(\mathbb R^n)$, $\Vert e^{-tp_t^w}w\Vert_{L^\mathfrak q}\leq\Vert w\Vert_{L^\mathfrak q}$. In a nutshell, we obtain that the $L^{\mathfrak q}$ norm of the function $v_t$ is bounded as follows
\begin{equation}\label{21102021E12}
	\Vert v_t\Vert_{L^{\mathfrak q}}\le C_5\,\varepsilon^{-\frac n{2\mathfrak r}}\, (\mathrm{det}(\varepsilon^2 I_n + t^2D_t^2 ))^{\frac1{2\mathfrak r} - \frac14}\,\Vert u\Vert_{L^\mathfrak p},
\end{equation}
with $C_5>0$ another positive constant only depending on the dimension. Recalling that $\varepsilon_t = 2\gamma t^{2k_0+1}$, we lastly need to control the right-hand side of the above estimate. We denote by $\lambda_{1,t},\ldots,\lambda_{n,t}$ the eigenvalues of the matrix $\varepsilon^2_tI_n + t^2D_t^2$. First, since the symmetric matrix $D^2_t$ is nonnegative, we get that
$$\forall j\in\{1,\ldots,n\},\quad\lambda_{j,t}\geq\varepsilon^2_t = 4\gamma^2t^{2(2k_0+1)}.$$
Let us denote by $\rho_t$ the spectral radius of the matrix $\varepsilon^2_tI_n + t^2D_t^2$. Since the norm $\Vert\cdot\Vert$ is induced by the canonical Euclidean norm on $\mathbb R^n$, we get that $\rho_t\le\Vert\varepsilon^2_tI_n + t^2D_t^2\Vert$. Moreover, the matrix $D_t^2$ is smooth with respect to the time-variable $t\in(-t_0,t_0)$, so that there exists a positive constant $C_6>0$ such that for all $1\le j\le n$,
$$\lambda_{j,t}\le\rho_t\le\Vert\varepsilon^2_tI_n + t^2D_t^2\Vert = t^2\Vert 4\gamma^2t^{4k_0}I_n + D_t^2\Vert\le C_6\,t^2.$$
The determinant of a matrix being equal to the product of its eigenvalues, we obtain that
\begin{equation}\label{21102021E10}
	(4\gamma^2)^nt^{2n(2k_0+1)}\le\mathrm{det}(\varepsilon^2_tI_n + t^2D_t^2)\le C_6^nt^{2n}.
\end{equation}
By gathering the estimates \eqref{21102021E11}, \eqref{21102021E12} and \eqref{21102021E10}, we deduce that there exists a positive constant $C>0$ such that for all $\alpha\in\mathbb N^n$, $0<t<t_0$ and $u\in\mathscr S(\mathbb R^n)$,
$$\big\Vert(\langle Gx\rangle+\langle \transp Gx\rangle)^{-\vert\alpha\vert_1}\partial^{\alpha}_x(e^{-tq^w}u)\big\Vert_{L^{\mathfrak q}}\le\frac{C^{1+\vert\alpha\vert_1}}{t^{(k_0+\frac12)\vert\alpha\vert_1+c_{\mathfrak p, \mathfrak q}}}\ \sqrt{\alpha!}\ \Vert u\Vert_{L^{\mathfrak p}},$$
where the positive constant $c_{\mathfrak p, \mathfrak q}>0$ is given by
$$\begin{array}{ll}
	\displaystyle c_{\mathfrak p, \mathfrak q} = \frac n{2\mathfrak r}(2k_0+1) - 2n\bigg(\frac1{2\mathfrak r}-\frac14\bigg) = \frac n{2\mathfrak r}(2k_0 + \mathfrak r - 1), & \text{when $1\le\mathfrak r\le 2$,} \\[15pt]
	\displaystyle c_{\mathfrak p, \mathfrak q} = \frac n{2\mathfrak r}(2k_0+1) - 2n(2k_0+1)\bigg(\frac1{2\mathfrak r}-\frac14\bigg) = \frac n{2\mathfrak r}(2k_0+1)(\mathfrak r-1), & \text{when $\mathfrak r>2$}.
\end{array}$$
\end{proof}

\section{Reciprocals}\label{sec:reciprocals}

The aim of this before last section is to prove Theorem \ref{thm:recip}, which is the reciprocal of Theorem \ref{thm:maindir} proven in Section \ref{sec:localsmooth}. The strategy is to use the polar decomposition \eqref{eq:ours} introduced by the authors in \cite{AB21}. Thanks to this decomposition, it is sufficient to consider only semigroups generated by quadratic selfadjoint operators.

The following lemma will be key in this section.

\begin{lemma}\label{02092021L1} Let $q:\mathbb R^{2n}\rightarrow\mathbb R_+$ be a nonnegative quadratic form which does not depend on the variable $\xi_n$. Then, we have that for all $f\in\mathscr S(\mathbb R^{n-1})$ and $g\in\mathscr S(\mathbb R)$,
$$e^{-q^w}(f\otimes g)(x,x_n) = (e^{-q_{x_n}^w}f)(x)g(x_n),\quad (x,x_n)\in\mathbb R^{n-1}\times\mathbb R,$$
where the symbol $q_{x_n}:\mathbb R^{2(n-1)}\rightarrow\mathbb R$ is defined by
$$q_{x_n}(x,\xi) = q(x,x_n,\xi,0),\quad (x,\xi)\in\mathbb R^{2(n-1)}.$$
\end{lemma}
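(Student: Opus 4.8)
The plan is to reduce the claim to an identity about the generator $q^w$ and then invoke uniqueness for the associated Cauchy problem. The first (and main) step is to exploit the hypothesis algebraically. Writing $x = (x',x_n)$ and $\xi = (\xi',\xi_n)$, the fact that the quadratic form $q$ does not involve $\xi_n$ means exactly that
$$q(x,\xi) = q_1(x',\xi') + x_n\,\ell(x',\xi') + c\,x_n^2,$$
where $q_1$ is a quadratic form on $\mathbb R^{2(n-1)}$, $\ell$ is a linear form on $\mathbb R^{2(n-1)}$ and $c\in\mathbb R$. Since Weyl quantization is linear and $x_n$ enters only through multiplication, one gets $q^w = q_1^w + x_n\,\ell^w + c\,x_n^2$, where $q_1^w$ and $\ell^w$ act on the variables $x'$ alone; in particular $q^w$ involves no $\partial_{x_n}$. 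Because $x_n$ commutes with $x'$ and $D_{x'}$, there is no Weyl-ordering subtlety, and quantizing the three pieces separately shows that for $f\in\mathscr S(\mathbb R^{n-1})$ and $g\in\mathscr S(\mathbb R)$,
$$q^w(f\otimes g)(x',x_n) = \big[(q_1^w + x_n\ell^w + cx_n^2)f\big](x')\, g(x_n) = (q_{x_n}^wf)(x')\, g(x_n),$$
since $q_1^w + x_n\ell^w + cx_n^2$ is exactly the Weyl quantization $q_{x_n}^w$ of the (inhomogeneous, degree $\leq2$) symbol $q_{x_n}(x',\xi') = q(x',x_n,\xi',0)$. I expect this intertwining identity to be the only genuinely non-formal point of the argument.

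Next I would check that $q_{x_n}^w$ generates a well-behaved semigroup. Evaluating the hypothesis $q\geq0$ at points of the form $(x',x_n,\xi',0)$ gives $q_{x_n}\geq0$, and (setting $x_n=0$) $q_1\geq0$; completing the square then yields $q_{x_n}(Y) = q_1(Y-Y_0) + c''$ for some $Y_0\in\mathbb R^{2(n-1)}$ and some $c''\geq0$, so that $q_{x_n}^w$ is unitarily conjugate, through a phase-space translation, to $q_1^w + c''$ and is therefore maximal accretive. Consequently $(e^{-tq_{x_n}^w})_{t\geq0}$ is a strongly continuous contraction semigroup on $L^2(\mathbb R^{n-1})$, and by the mapping properties of quadratic evolution operators recalled from \cite{Hor95} it preserves the Schwartz space $\mathscr S(\mathbb R^{n-1})$.

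Finally, set $v_t := (e^{-tq_{x_n}^w}f)\otimes g$. Since $h\mapsto h\otimes g$ is a bounded operator from $L^2(\mathbb R^{n-1})$ to $L^2(\mathbb R^n)$, we have $\partial_t v_t = (-q_{x_n}^we^{-tq_{x_n}^w}f)\otimes g$, which by the intertwining identity of the first step (applied to the Schwartz function $e^{-tq_{x_n}^w}f$) equals $-q^wv_t$; moreover $v_t$ is a Schwartz function, hence lies in $D(q^w)$, and $t\mapsto q^wv_t = (q_{x_n}^we^{-tq_{x_n}^w}f)\otimes g$ is continuous. Since $-q^w$ generates the contraction semigroup $(e^{-tq^w})_{t\geq0}$ on $L^2(\mathbb R^n)$, uniqueness of solutions of $\partial_tv_t = -q^wv_t$ with $v_0 = f\otimes g$ forces $v_t = e^{-tq^w}(f\otimes g)$ for all $t\geq0$; taking $t=1$ gives the statement. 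Everything past the first step is thus a routine application of $C_0$-semigroup theory together with Hörmander's results in \cite{Hor95}.
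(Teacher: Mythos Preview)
Your overall strategy---establish the identity $q^w(f\otimes g)=(q_{x_n}^wf)\,g$ at the level of generators and then appeal to uniqueness for the Cauchy problem---is sound and yields a self-contained argument. The paper's proof is shorter but not self-contained: it simply invokes \cite[Lemma~4]{Ber21} to get the slicewise identity $(e^{-q^w}u)_{|\mathbb R^{n-1}\times\{x_n\}}=e^{-q_{x_n}^w}u_{|\mathbb R^{n-1}\times\{x_n\}}$ and then specializes to $u=f\otimes g$. Your approach has the merit of making explicit why the absence of $\xi_n$ in $q$ is exactly what is needed.

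There is, however, a genuine slip in the final paragraph. The object $v_t(x',x_n)=(e^{-tq_{x_n}^w}f)(x')\,g(x_n)$ is \emph{not} a tensor product, because the first factor depends on the parameter $x_n$. Consequently the sentence ``since $h\mapsto h\otimes g$ is a bounded operator from $L^2(\mathbb R^{n-1})$ to $L^2(\mathbb R^n)$, we have $\partial_t v_t=(-q_{x_n}^we^{-tq_{x_n}^w}f)\otimes g$'' does not apply as written, and neither does your appeal to the intertwining identity ``applied to the Schwartz function $e^{-tq_{x_n}^w}f$'' (which is a different function for each $x_n$). The claim that $v_t$ is Schwartz on $\mathbb R^n$ is likewise unjustified and, in fact, unnecessary.

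The fix is straightforward. Work directly with $v_t$ as a function of $(x',x_n)$; since $q^w$ involves no $\partial_{x_n}$, one still has $(q^wv_t)(x',x_n)=g(x_n)(q_{x_n}^we^{-tq_{x_n}^w}f)(x')$ for each fixed $x_n$. Using $f\in\mathscr S(\mathbb R^{n-1})\subset D(q_{x_n}^w)$ and the contraction property, $\|q_{x_n}^we^{-tq_{x_n}^w}f\|_{L^2}\le\|q_{x_n}^wf\|_{L^2}\le\|q_1^wf\|+|x_n|\,\|\ell^wf\|+c\,x_n^2\|f\|$, which grows only polynomially in $x_n$. Since $g$ is Schwartz, dominated convergence then gives $v_t\in D(q^w)$, the strong $L^2$-differentiability $\partial_tv_t=-q^wv_t$, and the continuity of $t\mapsto q^wv_t$. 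With these details supplied, your semigroup uniqueness argument goes through.
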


\begin{proof} Being given $u\in\mathscr S(\mathbb R^n)$ a Schwartz function, we recall from Theorem 4.2 in \cite{Hor95} that $e^{-q^w}u$ is also a Schwartz function. Then, as a consequence\footnote{replacing $\{1\}$ by $\{x_n\}$ in its proof.} of \cite[Lemma 4]{Ber21}, we have\footnote{a priori in $L^2(\mathbb{R}^{n-1})$ but actually in $\mathscr{S}(\mathbb{R}^{n})$ since  $e^{-q^w}u$ is a Schwartz function.}
$$\forall x_n\in\mathbb R, \quad(e^{-q^w}u)_{\vert\mathbb R^{n-1}\times\{x_n\}} = (e^{-q_{x_n}^w}u_{\vert\mathbb R^{n-1}\times\{x_n\}}).$$
Applying this identity with $u = f \otimes g$, where $f\in\mathscr S(\mathbb R^{n-1})$ and $g\in\mathscr S(\mathbb R)$, it comes naturally, as expected, that
$$\forall x_n\in\mathbb R, \quad e^{-q^w}(f\otimes g)(\cdot,x_n) = e^{-q_{x_n}^w}(g(x_n)f) = (e^{-q_{x_n}^w}f)g(x_n).$$
\end{proof}

As announced, we begin by studying the reciprocal of Theorem \ref{thm:maindir} for semigroups generated by nonnegative quadratic forms.

\begin{proposition}\label{01102021P1} Let $q:\mathbb R^{2n}\rightarrow\mathbb R_+$ be a nonnegative quadratic form. Let $S\subset\mathbb R^{2n}$ be the isotropic cone of $q$. Assume that the geometric condition $S\cap(\{0\}\times\mathbb R^n)\ne\{0\}$ holds. Then: 
\begin{enumerate}
	\item[$(i)$] There exists $u\in L^2(\mathbb R^n)$ such that $e^{-q^w}u$ is not a continuous function.
	\item[$(ii)$] For all $2<\mathfrak p\le\infty$, there exists $u_{\mathfrak p}\in L^2(\mathbb R^n)$ such that $e^{-q^w}u_{\mathfrak p}\notin L^{\mathfrak p}(\mathbb R^n)$.
\end{enumerate}
\end{proposition}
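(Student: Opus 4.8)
The plan is to realize the heuristic of the introduction: after a linear symplectic change of variables one may assume that $q$ does not depend on the last frequency variable $\xi_n$, and then Lemma \ref{02092021L1} shows that in the $x_n$-direction $e^{-q^w}$ acts merely as multiplication by a smooth nowhere-vanishing function, so it cannot improve continuity nor integrability there. \emph{Step 1: reduction.} Since $S$ is the isotropic cone of the nonnegative form $q$ and $S\cap(\{0\}\times\mathbb R^n)\ne\{0\}$, there is $\xi_0\in\mathbb R^n\setminus\{0\}$ with $q(0,\xi_0)=0$. Choose $R\in O(n)$ so that $R^{-1}\xi_0$ is a positive multiple of $e_n$, and set $\kappa(x,\xi)=(Rx,R\xi)$. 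Conjugating by $Vu:=u\circ R$, which is an isometry of $L^{\mathfrak p}(\mathbb R^n)$ for every $\mathfrak p\in[1,\infty]$ and maps $C^0(\mathbb R^n)$ onto itself, amounts by metaplectic invariance of the Weyl calculus (as in the proof of Theorem \ref{thm:the_dec}) to replacing $q$ by the quadratic form $q\circ\kappa$, whose isotropic cone is $\kappa^{-1}S$; this is a cone that still meets $\{0\}\times\mathbb R^n$, and hence contains $(0,e_n)$. So we may assume $(0,e_n)\in S$. As $q\ge0$ vanishes at $(0,e_n)$, its gradient vanishes there, i.e.\ the matrix of $q$ annihilates $(0,e_n)$, which means precisely that $q$ does not depend on $\xi_n$. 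We are then in the framework of Lemma \ref{02092021L1}, with $\mathbb R^n=\mathbb R^{n-1}_{x'}\times\mathbb R_{x_n}$ and $q_{x_n}(x',\xi')=q(x',x_n,\xi',0)$.

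\emph{Step 2: the multiplier structure.} Fix $f(x')=e^{-|x'|^2/2}$. The degree-two part in $(x',\xi')$ of the affine-quadratic symbol $q_{x_n}$ equals $q_0:=q(\cdot,0,\cdot,0)$, which is \emph{independent of $x_n$}; since $q_{x_n}\ge0$, completing the square produces a phase-space translation $\mathcal T_{x_n}$ depending linearly on $x_n$ — unitary on $L^2$ and an isometry of every $L^{\mathfrak p}$ — and a nonnegative scalar $\nu_{x_n}$ depending polynomially on $x_n$ with $\nu_0=0$, such that
\[
q_{x_n}^w=\mathcal T_{x_n}^{-1}\,q_0^w\,\mathcal T_{x_n}+\nu_{x_n},\qquad\text{hence}\qquad e^{-q_{x_n}^w}=e^{-\nu_{x_n}}\,\mathcal T_{x_n}^{-1}\,e^{-q_0^w}\,\mathcal T_{x_n}.
\]
In particular $e^{-q_{x_n}^w}$ is a contraction on $L^2$ uniformly in $x_n$, so with $h(x',x_n):=(e^{-q_{x_n}^w}f)(x')$ the map $g\mapsto h\cdot g$ is bounded $L^2(\mathbb R)\to L^2(\mathbb R^n)$; since $e^{-q^w}$ is bounded on $L^2(\mathbb R^n)$ and the identity $e^{-q^w}(f\otimes g)(x',x_n)=h(x',x_n)g(x_n)$ holds for $g\in\mathscr S(\mathbb R)$ by Lemma \ref{02092021L1}, it extends by density to every $g\in L^2(\mathbb R)$. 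Moreover, $h$ is jointly smooth with $h(\cdot,0)=e^{-q_0^w}f$ a nowhere-vanishing Gaussian (accretive quadratic semigroups preserve the class of Gaussians), and consequently $x_n\mapsto\|h(\cdot,x_n)\|_{L^{\mathfrak p}(\mathbb R^{n-1})}$ is continuous and positive near $x_n=0$ for every $\mathfrak p\in[1,\infty]$.

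\emph{Step 3: conclusion.} Choose $x_0'$ with $h(x_0',0)\ne0$, so that $h$ does not vanish on a box $B(x_0',r)\times(-\delta,\delta)$. For $(i)$, take $g=\mathbbm 1_{[0,1]}$ and $u=f\otimes g\in L^2(\mathbb R^n)$: if $e^{-q^w}u$ had a continuous representative $\varphi$, then $\varphi=h\cdot g$ a.e., whence by Fubini $g=\varphi(x',\cdot)/h(x',\cdot)$ a.e.\ on $(-\delta,\delta)$ for some $x'\in B(x_0',r)$; the right-hand side is continuous, so $g$ would have a continuous representative near $0$, contradicting its jump there. For $(ii)$ with $2<\mathfrak p\le\infty$, pick $\alpha\in[1/\mathfrak p,1/2)$ (or $\alpha\in(0,1/2)$ if $\mathfrak p=\infty$) and $g(x_n)=|x_n|^{-\alpha}\mathbbm 1_{[-1,1]}(x_n)\in L^2(\mathbb R)\setminus L^{\mathfrak p}(\mathbb R)$, and set $u_{\mathfrak p}=f\otimes g$. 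When $\mathfrak p<\infty$, shrinking $\delta$ so that $\|h(\cdot,x_n)\|_{L^{\mathfrak p}}^{\mathfrak p}\ge c>0$ on $(-\delta,\delta)$,
\[
\|e^{-q^w}u_{\mathfrak p}\|_{L^{\mathfrak p}}^{\mathfrak p}=\int_{\mathbb R}\|h(\cdot,x_n)\|_{L^{\mathfrak p}}^{\mathfrak p}\,|g(x_n)|^{\mathfrak p}\,\mathrm dx_n\ge c\int_{-\delta}^{\delta}|x_n|^{-\alpha\mathfrak p}\,\mathrm dx_n=+\infty,
\]
while when $\mathfrak p=\infty$ one has $|h\cdot g|\ge\tfrac12|h(x_0',0)|\,|x_n|^{-\alpha}$ on the box, which is essentially unbounded, so $e^{-q^w}u_{\mathfrak p}\notin L^\infty(\mathbb R^n)$.

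\emph{Expected main obstacle.} The computations are soft; the two delicate points are preparatory. First, the reduction in Step 1 must be performed with a metaplectic operator that is simultaneously an isometry of every $L^{\mathfrak p}$ and of $C^0$: a general metaplectic operator (a partial Fourier transform, say) is not, but the pure rotation $u\mapsto u\circ R$ is, and that is all we need since we only rotate the $\xi$-variables through $(x,\xi)\mapsto(Rx,R\xi)$. Second, Lemma \ref{02092021L1} is stated only for Schwartz $g$, and its extension to $g\in L^2$ — which is exactly what makes the discontinuous and non-$L^{\mathfrak p}$ choices of $g$ above admissible — rests on the uniform-in-$x_n$ contractivity of $e^{-q_{x_n}^w}$ obtained by completing the square; this is precisely where the hypothesis $q\ge0$ is used, as it forces the remainder $\nu_{x_n}$ to be nonnegative.
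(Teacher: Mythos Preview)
Your proof is correct and follows the same overall architecture as the paper's: reduce via a linear change of variables to the case $(0,e_n)\in S$, invoke Lemma \ref{02092021L1} to write $e^{-q^w}(f\otimes g)=h\cdot g$ with $h(x',x_n)=(e^{-q_{x_n}^w}f)(x')$, and then choose $g\in L^2\setminus C^0$ (resp.\ $L^2\setminus L^{\mathfrak p}$) to defeat continuity (resp.\ $L^{\mathfrak p}$-integrability).

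The execution differs in two places worth noting. First, the paper establishes that $h$ is continuous via the identity $h=e^{-q^w}(f\otimes e^{-|x_n|^2})\,e^{|x_n|^2}$ with Schwartz input, and obtains $h(x_*,0)\neq0$ from injectivity of $e^{-q_0^w}$ (quoted from \cite{AB21}); you instead complete the square in $q_{x_n}$ to write $e^{-q_{x_n}^w}=e^{-\nu_{x_n}}\mathcal T_{x_n}^{-1}e^{-q_0^w}\mathcal T_{x_n}$, which is valid because nonnegativity of $q$ forces the linear part of $q_{x_n}$ to lie in the range of the matrix of $q_0$, and then read off that $h(\cdot,x_n)$ is a nowhere-vanishing Gaussian varying smoothly in $x_n$. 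Second, and more substantively, the paper applies the tensor identity directly to non-Schwartz $g$ without comment, whereas your uniform-in-$x_n$ $L^2$-contractivity of $e^{-q_{x_n}^w}$ (immediate from $\nu_{x_n}\ge0$ and unitarity of $\mathcal T_{x_n}$) gives a clean density extension of Lemma \ref{02092021L1} to all $g\in L^2$. This is an improvement in rigor over the paper's presentation; the paper's argument can be completed the same way, or by noting that its choices of $g$ are compactly supported so that local bounds on $h$ suffice.
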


\begin{proof} Let us assume once and for all that $S\cap(\{0\}\times\mathbb R^n)\ne\{0\}$. Therefore, there exists a vector $\xi_0\in\mathbb R^n \setminus \{0\}$ such that $(0,\xi_0)\in S$. We begin by checking that we can choose $\xi_0 = e_n := (0,\cdots,0,1)$. Let us consider an invertible matrix $A\in\mathrm{GL}_n(\mathbb R)$ such that $A^Te_n = \xi_0$.  The change of variable $x \mapsto Ax$ in the equation $\partial_t u + q^w u =0$ writes\footnote{note that this formula also follows directly from the metaplectic invariance of the Weyl calculus.} as 
$$K_A^{-1}e^{-q^w}K_A = e^{-(q\circ T_A)^w}\quad\text{where}\quad T_A = \begin{pmatrix}
	A^{-1} & 0 \\
	0 & \transp A
\end{pmatrix}\in\mathrm{Sp}_{2n}(\mathbb R),$$
where $K_A$ is the unitary operator on $L^2(\mathbb R^n)$ defined by 
$$
K_Au = \sqrt{\vert\det A\vert}\, u(A\,\cdot),\quad u\in L^2(\mathbb R^n).
$$
Notice that $K_A$ is also a similarity on $L^{\mathfrak p}$ for all $2<\mathfrak p<+\infty$, and maps the space $C^0(\mathbb R^n)$ into itself. Moreover, since $(0,\xi_0)\in S$ and that the vector space $S$ is the isotropic cone of the quadratic form $q$, we have
$$(q\circ T_A)(0,e_n) = q(0, \transp Ae_n) = q(0,\xi_0) = 0.$$
We can therefore choose $\xi_0 = e_n$.

Let us now tackle the proof of the assertion $(i)$. We have to prove that there exists a function $u\in L^2(\mathbb R^n)$ such that $e^{-q^w}u\notin C^0(\mathbb R^n)$. To that end, for all $x_n\in\mathbb R$, we consider the degree-2 polynomial $q_{x_n}:\mathbb R^{2(n-1)}\rightarrow\mathbb R$ defined by
\begin{equation}\label{09092021E1}
	q_{x_n}(x,\xi) = q(x,x_n,\xi,0),\quad (x,\xi)\in\mathbb R^{2(n-1)}.
\end{equation}
We know from \cite[Corollary 7.9]{AB21} that the operator $e^{-q_0^w}$ generated by the nonnegative quadratic form $q_0$ is one-to-one, so there exists a function $f$ from the Schwartz space $\mathscr S(\mathbb R^{n-1})$ such that the function $e^{-q_0^w}f$ is not identically equal to zero. As a consequence,
\begin{equation}\label{02092021E1}
	\exists x_*\in\mathbb R^{n-1}, \quad (e^{-q_0^w}f)(x_*)\ne0.
\end{equation}
We set $g(x_n) = (\log(x_n))^{-2} \mathbbm{1}_{|x_n|<1}$ and $u = f\otimes g\in L^2(\mathbb R^n)$. Let us assume that the function $e^{-q^w}u$ is continuous at $(x_*,0)$. Since $q(0,e_n) = 0$, the quadratic form $q$ does not dependent on the variable $\xi_n\in\mathbb R^n$, and it follows from Lemma \ref{02092021L1} that $e^{-q^w}u$ is given by
\begin{equation}\label{09112021E1}
	(e^{-q^w}u)(x,x_n) = (e^{-q_{x_n}^w}f)(x)g(x_n),\quad (x,x_n)\in\mathbb R^{n-1}\times\mathbb R,
\end{equation}
where the symbol $q_{x_n}$ is the one defined in \eqref{09092021E1}. Notice that the function $e^{-q_{x_n}^w}f$ does not vanish at $(x_*,0)$ by definition of the point $x_*$:
$$(e^{-q_{x_n}^w}f)(x_*,0) = (e^{-q_0^w}f)(x_*)\ne0.$$
Moreover, the function $e^{-q_{x_n}^w}f$ is continuous, since we deduce from Lemma \ref{02092021L1} anew that
$$e^{-q_{x_n}^w}f = e^{-q^w}(f\otimes e^{-\vert x_n\vert^2})e^{\vert x_n\vert^2},$$
and $e^{-q^w}:\mathscr S(\mathbb R^n)\rightarrow\mathscr S(\mathbb R^n)$ from \cite[Theorem 4.2]{Hor95}. According to \eqref{09112021E1}, the function $g$ would therefore be continuous, but is not.

Now, we prove the assertion $(ii)$. Given $2<\mathfrak p\le\infty$, we aim at finding a function $u_{\mathfrak p}\in L^2(\mathbb R^n)$ such that $e^{-q^w}u_{\mathfrak p}\notin L^{\mathfrak p}(\mathbb R^n)$. We first assume that $2<\mathfrak p<\infty$. Let $f\in\mathscr S(\mathbb R^{n-1})$ be the same Schwartz function as in the previous paragraph. In particular, \eqref{02092021E1} holds, and since the function $e^{-q_0^w}f$ is continuous at $(x_*,0)$, where $x_*\in\mathbb R^{n-1}$ is the point appearing in \eqref{02092021E1}, we deduce that there exist $m>0$ and $r_*>0$ such that for all $(x,x_n)\in\mathbb R^{n-1}\times\mathbb R$,
\begin{equation}\label{01102021E1}
	\vert x-x_*\vert+\vert x_n\vert\le r_*\Rightarrow\big\vert(e^{-q_{x_n}^w}f)(x)\big\vert\geq m.
\end{equation}
Let us now consider the function $g_{\mathfrak p}\in L^2(\mathbb R)\setminus L^{\mathfrak p}(\mathbb R)$ defined by
$$g_{\mathfrak p}(x_n) = \frac1{\vert x_n\vert^{1/\mathfrak p}}\mathbbm1_{\vert x_n\vert\le r_*},\quad x_n\in\mathbb R.$$
Setting $u_{\mathfrak p} = f\otimes g_{\mathfrak p}\in L^2(\mathbb R^n)$, it follows from \eqref{09112021E1} and \eqref{01102021E1} that
$$\big\Vert e^{-q^w}u_{\mathfrak p}\big\Vert^{\mathfrak p}_{L^{\mathfrak p}}=\int_{\mathbb R\times\mathbb R^{n-1}}\big\vert(e^{-q_{x_n}^w}f)(x)g_{\mathfrak p}(x_n)\big\vert^{\mathfrak p}\,\mathrm dx\mathrm dx_n\geq m^{\mathfrak p}\int_{\vert x-x_*\vert+\vert x_n\vert\le r_*}\vert g_{\mathfrak p}(x_n)\vert^{\mathfrak p}\,\mathrm dx\mathrm dx_n = +\infty.$$
As a consequence, $e^{-q^w}u_{\mathfrak p}\notin L^{\mathfrak p}(\mathbb R^n)$ as expected. The case $\mathfrak p=\infty$ can be treated the same way by considering $g_{\infty}\in L^2(\mathbb R)\setminus L^{\infty}(\mathbb R)$.
\end{proof}

We can now tackle the proof of Theorem \ref{thm:recip}.

\begin{proof}[Proof of Theorem \ref{thm:recip}] This theorem is a quite  straightforward consequence of Proposition \ref{01102021P1} and the polar decomposition introduced in \cite{AB21}. Indeed, on the one hand, Theorem 2.1 in \cite{AB21} implies that there exist a nonnegative quadratic form $a:\mathbb R^{2n}\rightarrow\mathbb R_+$ and a unitary operator $U$ on $L^2(\mathbb R^n)$ such that 
\begin{equation}\label{09112021E2}
	e^{-q^w} = e^{-a^w} U.
\end{equation}
On the other hand, assuming that $S \cap(\mathbb R^n\times\{ 0 \})^\perp \ne \{0\}$, and since $S$ is the isotropic cone of the quadratic form $a$, see Proposition \ref{prop:singspacepolar} in appendix, we deduce that there exists a function $u\in L^2(\mathbb R^n)$ such that $e^{-a^w}u$ is not continuous, and for all $2<\mathfrak p\le\infty$, there exists $u_{\mathfrak p}\in L^2(\mathbb R^n)$ such that $e^{-a^w}u_{\mathfrak p}\notin L^{\mathfrak p}(\mathbb R^n)$. Setting $\tilde u = U^{-1}u\in L^2(\mathbb R^n)$ and $\tilde u_{\mathfrak p} = U^{-1}u_{\mathfrak p}\in L^2(\mathbb R^n)$, we therefore get from \eqref{09112021E2} that $e^{-q^w}\tilde u\notin C^0(\mathbb R^n)$ and $e^{-q^w}\tilde u_{\mathfrak p}\notin L^{\mathfrak p}(\mathbb R^n)$ for all $2<\mathfrak p\le\infty$. This proves the result by contraposition.
\end{proof}

\section{Appendix} 

\subsection{The geometric condition}

\begin{lemma} \label{lem:graph} Let $n\geq 1$ and $S$ be a subspace of $\mathbb R^{2n}$. The following assertions are equivalent:
\begin{enumerate}[parsep=2pt,itemsep=0pt,topsep=2pt]
\item[$(i)$] $S$ is included in the graph of a $n\times n$ real matrix $G$, i.e. $S \subset \{ (x,Gx) \ | \ x\in \mathbb R^n \}$,
\item[$(ii)$] $S \cap  (\mathbb R^n  \times \{ 0 \})^\perp = \{0\}$.
\end{enumerate}
\end{lemma}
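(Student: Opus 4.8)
The plan is to translate condition $(ii)$ into a statement about a projection and then invoke elementary linear algebra. First I would record that, since the Euclidean orthogonal complement of $\mathbb R^n\times\{0\}$ in $\mathbb R^{2n}$ is exactly $\{0\}\times\mathbb R^n$, assertion $(ii)$ is equivalent to
$$S\cap(\{0\}\times\mathbb R^n)=\{0\}.$$

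The implication $(i)\Rightarrow(ii)$ is immediate: if $S\subset\{(x,Gx)\ |\ x\in\mathbb R^n\}$ and $(0,\xi)\in S$, then $(0,\xi)=(x,Gx)$ for some $x\in\mathbb R^n$, which forces $x=0$ and hence $\xi=G\cdot 0=0$; thus $S\cap(\{0\}\times\mathbb R^n)=\{0\}$.

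For $(ii)\Rightarrow(i)$, I would consider the first projection $\pi:\mathbb R^{2n}\to\mathbb R^n$, $\pi(x,\xi)=x$. The kernel of the restriction $\pi_{|S}$ is precisely $S\cap(\{0\}\times\mathbb R^n)$, so $(ii)$ says exactly that $\pi_{|S}$ is injective. Consequently $\pi$ induces a linear isomorphism from $S$ onto the subspace $V:=\pi(S)\subset\mathbb R^n$; its inverse has the form $x\mapsto(x,\phi(x))$ for a uniquely determined linear map $\phi:V\to\mathbb R^n$, and by construction $S=\{(x,\phi(x))\ |\ x\in V\}$. It then remains to extend $\phi$ to a linear endomorphism of $\mathbb R^n$: choosing any linear complement $W$ of $V$ in $\mathbb R^n$ and setting $G$ to be the linear map agreeing with $\phi$ on $V$ and with $0$ on $W$, one gets $S=\{(x,Gx)\ |\ x\in V\}\subset\{(x,Gx)\ |\ x\in\mathbb R^n\}$, which is $(i)$.

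There is no real obstacle here; the argument is routine linear algebra. The only point deserving a word of care is the final extension step, which relies on the existence of a linear complement of the subspace $V$ in $\mathbb R^n$ (equivalently, on extending a linear map defined on a subspace to the whole space); everything else is a direct unravelling of the definitions.
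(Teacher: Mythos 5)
Your proof is correct and follows essentially the same route as the paper's: both arguments project $S$ onto the first factor, use $(ii)$ to get injectivity of that projection restricted to $S$, obtain a linear map on $V=\pi(S)$ whose graph contains $S$, and extend it to all of $\mathbb{R}^n$. If anything, you are slightly more explicit than the paper about the final extension step (choosing a complement of $V$), which the paper leaves implicit.
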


\begin{proof} Since $(\mathbb{R}^n  \times \{ 0 \})^\perp =   \{ 0 \} \times \mathbb{R}^n$ and $G0=0$, it is obvious that $(i)$ implies $(ii)$. So from now, we only aim at proving that $(ii)$ implies $(i)$. We assume that $S \cap  (\mathbb{R}^n  \times \{ 0 \})^\perp = \{0\}$ and we denote by $\Pi$ the orthogonal projection on $\mathbb{R}^n  \times \{ 0 \}$. We set $V = \Pi S$. First, we check that for all $x\in V$ there exists a unique $g(x) \in \mathbb{R}^n$ such that $(x,g(x)) \in S$. Indeed, the existence is obvious by definition of $\Pi$. Moreover if both $(x,\xi) \in S$ and $(x,\zeta) \in S$ then, since $S$ is a vector space, $(0,\xi-\zeta) \in S$. But, by assumption $S \cap  (  \{ 0 \} \times \mathbb{R}^n) = \{0\}$ and so $(0,\xi-\zeta)= 0$, i.e. $\xi=\zeta$. Finally, we just have to check that $x\mapsto g(x)$ is linear but this is a straightforward consequence of the fact that $S$ is a vector space.
\end{proof}

\subsection{Singular space and polar decomposition} Let $q:\mathbb R^{2n}\rightarrow\mathbb C$ be a complex-valued quadratic form with a nonnegative real part. We have proven in \cite[Theorem 2.1]{AB21} that there exists a family $(a_t)_{t\in\mathbb R}$ of nonnegative quadratic forms $a_t:\mathbb R^{2n}\rightarrow\mathbb R_+$ depending analytically on the time-variable $t\in\mathbb R$ and a family $(U_t)_{t\in\mathbb R}$ of unitary operators on $L^2(\mathbb R^n)$ such that
$$\forall t\geq0,\quad e^{-tq^w} = e^{-ta_t^w}U_t.$$
In the following proposition, we make the link between the isotropic cones of the quadratic forms $a_t$ and the singular space $S$ of $q$, defined by \eqref{def:S}.

\begin{proposition}\label{prop:singspacepolar} The singular space $S$ of the quadratic form $q$ is the isotropic cone of the quadratic form $a_t$ for all $t>0$.
\end{proposition}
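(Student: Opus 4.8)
The plan is to prove, for each fixed $t>0$, the single identity $S^{\perp}=C(a_t)^{\perp}$, where $C(a_t):=\{X\in\mathbb R^{2n}:a_t(X)=0\}$ denotes the isotropic cone of $a_t$. Since $a_t\ge 0$, this cone is the linear subspace $\Ker A_t$, $A_t$ being the matrix of $a_t$, so taking orthogonal complements will give $S=C(a_t)$, which is exactly the claim.

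The main ingredient is the exact geometric description of $S^{\perp}$ recalled in \eqref{eq:weakAENS}, i.e. \cite[Thm 2.6]{AB21}, valid for the evolution operator of \emph{any} quadratic operator with nonnegative real part. Since $q$ and $tq$ have the same singular space $S$, applying it to $tq$ gives
$$S^{\perp}=\big\{(x_0,\xi_0)\in\mathbb R^{2n}\ :\ (x_0\cdot x+\xi_0\cdot D_x)\,e^{-tq^w}\ \text{is bounded on}\ L^2(\mathbb R^n)\big\}.$$
Next, $e^{-ta_t^w}$ is the evolution operator at time $1$ of the selfadjoint nonnegative quadratic operator $(ta_t)^w$, whose Hamilton map is real, so that \eqref{def:S} degenerates and the singular space of $ta_t$ is simply $\Ker(tA_t)=C(a_t)$. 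Applying \eqref{eq:weakAENS} to $ta_t$ then yields
$$C(a_t)^{\perp}=\big\{(x_0,\xi_0)\in\mathbb R^{2n}\ :\ (x_0\cdot x+\xi_0\cdot D_x)\,e^{-ta_t^w}\ \text{is bounded on}\ L^2(\mathbb R^n)\big\}.$$

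It then remains to identify the two right-hand members, and this is where the polar decomposition $e^{-tq^w}=e^{-ta_t^w}U_t$ comes in. Since $U_t$ is unitary and the operators $e^{-tq^w}$, $e^{-ta_t^w}$, $U_t$ all map $\mathscr S(\mathbb R^n)$ into itself (by \cite[Thm 4.2]{Hor95} and the metaplectic invariance of the Weyl calculus), one has, for every $u\in\mathscr S(\mathbb R^n)$,
$$\big\|(x_0\cdot x+\xi_0\cdot D_x)\,e^{-tq^w}u\big\|_{L^2}=\big\|(x_0\cdot x+\xi_0\cdot D_x)\,e^{-ta_t^w}(U_tu)\big\|_{L^2};$$
as $U_t$ is a bijective isometry of $\mathscr S(\mathbb R^n)$ and of $L^2(\mathbb R^n)$, the operator $(x_0\cdot x+\xi_0\cdot D_x)e^{-tq^w}$ is bounded on $L^2$ if and only if $(x_0\cdot x+\xi_0\cdot D_x)e^{-ta_t^w}$ is. Comparing the two displayed identities gives $S^{\perp}=C(a_t)^{\perp}$, hence $S=C(a_t)$. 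For small $t$ one of the two inclusions, namely $C(a_t)\subseteq S$, can alternatively be read off directly from the lower bound \eqref{eq:est_at}, which forces $\Pi_{S^{\perp}}X=0$ whenever $a_t(X)=0$.

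The only point that needs care — and really the only one — is the bookkeeping around \eqref{eq:weakAENS}: one has to use that it is an \emph{equality}, and to check that it applies without change both to the non-selfadjoint operator $q^w$ and to the selfadjoint operator $(ta_t)^w$, with the singular space in the latter case identified correctly as $\Ker A_t$. Everything else is purely formal, and no new estimate is required.
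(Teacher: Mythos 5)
Your argument is correct, but it follows a genuinely different route from the paper's. The paper proves the two inclusions separately and by elementary means: the inclusion $C(a_t)\subset S$ follows from the quantitative lower bound $a_t(X)\geq c\sum_{j=0}^{k_0}t^{2j}\Reelle q((\Imag F)^jX)$ of \cite[Theorem 2.2]{AB21} (valid for small $t$, then extended to all $t$ by analyticity), while the reverse inclusion is obtained by applying the Baker--Campbell--Hausdorff formula to the matrix identity $e^{-4itJA_t}=e^{-2itJQ}e^{-2itJ\overline Q}$ of \cite[Theorem 3.2]{AB21}, which shows that every term of the analytic expansion of $tA_t$ carries a factor $(\Reelle JQ)(t\Imag JQ)^k$ and hence vanishes on $S$. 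You instead deduce both inclusions at once from the characterization \eqref{eq:weakAENS} of $S^{\perp}$, applied to $tq$ and to $ta_t$, together with the unitarity of $U_t$; the bookkeeping you flag (that \eqref{eq:weakAENS} is an equality, that the singular space of the real nonnegative form $ta_t$ is $\Ker A_t=C(a_t)$, and that $U_t$ is a metaplectic operator preserving $\mathscr S(\mathbb R^n)$ so the boundedness of $(x_0\cdot x+\xi_0\cdot D_x)e^{-tq^w}$ and of $(x_0\cdot x+\xi_0\cdot D_x)e^{-ta_t^w}$ are equivalent) is indeed all that is needed, and it checks out. The trade-off is that your proof invokes a much heavier black box: \cite[Theorems 2.6 and 2.8]{AB21} is a deep regularity characterization whose own proof already rests on the fine relationship between $S$ and the polar factor $a_t$, so while your argument is legitimate as a deduction from quoted results, the paper's proof is preferable in that it only consumes the more primitive inputs (Theorems 2.2 and 3.2 of \cite{AB21}) and avoids any appearance of circularity. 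Your closing remark that $C(a_t)\subset S$ can be read off directly from \eqref{eq:est_at} is exactly the paper's first half (modulo the analyticity extension from $t\in(0,T)$ to all $t>0$, which should not be omitted).
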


\begin{proof} First of all, the fact that the isotropic cone of $a_t$ is contained in the singular space $S$ is a straightforward consequence of the definition \eqref{def:S} of the singular space $S$, the definition \eqref{def:k_0} of the global index $0\le k_0\le 2n-1$, the following estimate stated in \cite[Theorem 2.2]{AB21}
$$a_t(X)\geq c\sum_{j=0}^{k_0} t^{2j} \Reelle q\big((\Imag F)^j X\big),\quad t\in[0,T],\, X\in\mathbb R^{2n},$$
with $T\in(0,1)$, and an analyticity argument (recall that $a_t$ depends analytically on the time variable $t\in\mathbb R$).

For the reciprocal, a simple way to check that $a_t$ vanishes on $S$ is to use the Baker-Campbell-Hausdorff formula thanks to the following relation, see \cite[Theorem 3.2]{AB21},
$$e^{-4itJA_t} = e^{-2itJQ} e^{-2itJ\overline Q},$$
where $A_t$ (resp. $Q$) denotes the matrix of $a_t$ (resp. $q$). Indeed, it proves that $tA_t$ is an analytic function of $(t\Reelle JQ,t\Imag JQ)$. But since $tA_t$ is a real matrix, each term of its analytic expansion has to contain at least one factor of the form $t\Reelle JQ$. As a consequence, each term is of the form $M_t(\Reelle JQ)(t\Imag JQ)^k$ for some matrix $M_t$ and some integer $k\in\mathbb N$. Therefore, by definition of $S$, each term of this analytic expansion vanishes on $S$, and so $A_t$ also vanishes on $S$.
\end{proof}


\subsection*{Conflict of interest}

The authors declare that they have no conflict of interest.

\end{document}